\newtheorem{theorem}{Theorem}[section]
\newtheorem{lemma}[theorem]{Lemma}
\newtheorem{proposition}[theorem]{Proposition}
\newtheorem{corollary}[theorem]{Corollary}
\theoremstyle{definition}
\newtheorem{definition}{Definition}
\renewcommand{\leq}{\leqslant}
\renewcommand{\geq}{\geqslant}
\newcommand\T{\mathbf{T}}
\newcommand\Supp{\operatorname{Supp}}
\newcommand\main{\operatorname{main}}
\newcommand\error{\operatorname{error}}
\newcommand\tor{\operatorname{tor}}
\newcommand\sml{\operatorname{sml}}
\newcommand\unf{\operatorname{unf}}
\def\R{\mathbf{R}}
\def\Z{\mathbf{Z}}
\def\E{\mathbf{E}}
\def\P{\mathsf{P}}
\def\N{\mathbf{N}}
\def\eps{\varepsilon}
\newcommand{\md}[1]{\ensuremath{(\operatorname{mod}\, #1)}}
\newcommand{\mdsub}[1]{\ensuremath{(\mbox{\scriptsize mod}\, #1)}}
\newcommand{\mdlem}[1]{\ensuremath{(\mbox{\textup{mod}}\, #1)}}
\numberwithin{equation}{section}
\begin{document}

\title[Monochromatic solutions to $x + y = z^2$]{Monochromatic solutions to $x + y = z^2$}



\author{Ben Green}
\address{Mathematical Institute, Radcliffe Observatory Quarter, Woodstock Rd, Oxford OX2 6GG}
\email{ben.green@maths.ox.ac.uk}

\author{Sofia Lindqvist}
\address{Mathematical Institute, Radcliffe Observatory Quarter, Woodstock Rd, Oxford OX2 6GG}
\email{lindqvist@maths.ox.ac.uk}
\thanks{This work was partially supported by a grant from the Simons Foundation (award number 376201 to Ben Green) , and the first author is supported by ERC Advanced Grant AAS 279438. We thank both organisations for their support.}

\begin{abstract}
Suppose that $\N$ is $2$-coloured. Then there are infinitely many monochromatic solutions to $x + y = z^2$. On the other hand, there is a $3$-colouring of $\N$ with only finitely many monochromatic solutions to this equation.
\end{abstract}
\maketitle

\section{Introduction}

In this paper we will be concerned with the Ramsey theory of the equation $x + y = z^2$. It was shown relatively recently by Csikv\'ari, Gyarmati and S\'ark\"ozy \cite{gcs} that this equation is \emph{not} partition regular. Indeed, a 16-colouring of $\N$ is exhibited with no monochromatic solutions to $x + y = z^2$ other than the trivial one $x = y = z = 2$. There remains the question of whether the 16 here is optimal. Our main theorem completely answers this question.

\begin{theorem}\label{mainthm} There is a 3-colouring of $\N$ with no monochromatic solution to $x + y = z^2$ other than the trivial one. On the other hand, every 2-colouring of $\N$ has infinitely many monochromatic solutions to $x + y = z^2$.
\end{theorem}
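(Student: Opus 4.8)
The plan is to prove the two assertions separately: the $3$-colouring by an explicit construction, and the $2$-colour impossibility by a structural argument.

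\emph{The $3$-colouring.} Set $a_0 = 3$ and $a_{k+1} = \lfloor a_k^2/2\rfloor$, so that $a_0 < a_1 < a_2 < \cdots$ with $a_k \ge 4$ for $k \ge 1$ and $a_{\ell+1}^2 < a_{\ell+3}$ for all $\ell \ge 0$. Colour $n \in [a_k, a_{k+1})$ with $k \bmod 3$, and colour $1$, $2$ with colours $1$, $2$. Two facts finish it. No monochromatic solution can have $x,y,z$ all in one block $[a_k,a_{k+1})$: there $x+y < 2a_{k+1} \le a_k^2 \le z^2$, absurd. And in any solution with $z \ge 3$, put $z$ in the block of index $\ell$ and $\max(x,y)$ in the block of index $j$; then $z^2 = x+y < 2a_{j+1} \le a_j^2$ gives $z < a_j$ and so $\ell < j$, while $a_j \le \max(x,y) < z^2 < a_{\ell+1}^2 < a_{\ell+3}$ gives $j < \ell+3$; hence $j \in \{\ell+1,\ell+2\}$, so $z$ and $\max(x,y)$ have different colours. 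The only solutions left have $z \le 2$, i.e.\ $x+y \le 4$; the sole nontrivial one is $1+3 = 2^2$, which fails because $1,3,2$ carry colours $1,0,2$.

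\emph{The $2$-colour impossibility.} Suppose $\N = R \sqcup B$ has only finitely many monochromatic solutions, so no monochromatic solution has $z > N_0$ for some $N_0$. Both colours are infinite: if $R$ were finite, then for each large $z \in B$ all but at most $|R|$ of the $b \in B$ with $b < z^2$ satisfy $z^2 - b \in B$, producing infinitely many blue solutions. For red $z > N_0$, a red $r < z^2$ with $z^2-r$ red would give a red solution, so $z^2 - (R\cap[1,z^2)) \subseteq B$; symmetrically for blue. Since both colours are infinite, the colour changes from $R$ to $B$ infinitely often, so there are infinitely many $z > N_0$ with $\chi(z)=R$, $\chi(z+1)=B$. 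For such $z$ and any $r \in R \cap [1,z^2)$ we get $z^2 - r \in B$, and $z^2-r < (z+1)^2$, so $(z+1)^2 - (z^2-r) = r + (2z+1) \in R$. Thus $(R\cap[1,z^2)) + (2z+1) \subseteq R$ for infinitely many $z$; iterating (always shifting by $2z+1$ for $z$ chosen large enough that every partial sum stays below $z^2$) shows that for each $r \in R$ the set $R$ contains a whole tail of the residue class of $r$ modulo $d := \gcd\{2z+1 : z \text{ as above}\}$, and likewise $B$ for some modulus $d'$. Hence $\chi$ is eventually periodic modulo $L := \operatorname{lcm}(d,d')$.

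\emph{The finish.} An eventually periodic $2$-colouring has infinitely many monochromatic solutions: fixing $m$ with $mL \ge n_0$ (where $\chi$ is $L$-periodic beyond $n_0$), the triples $(mL,\ k^2L^2-mL,\ kL)$ for large $k$ all have entries $\equiv 0 \pmod L$, hence are monochromatic and, for $k$ large, nontrivial — contradicting finiteness. The heart of the matter, and the step I expect to be most delicate, is the passage from the family of inclusions $(R\cap[1,z^2))+(2z+1)\subseteq R$ to genuine eventual periodicity: one has to keep careful track of which translations are actually available below a given height and show they organise each colour class into a finite union of residue classes. (A more robust alternative for this direction is a circle-method count of monochromatic solutions with $z \le \sqrt N$, whose main term is $(\alpha^3+\beta^3)$ times the $\gg N^2$ total count; there the difficulty moves to controlling the major-arc terms arising from correlation of the colouring with $e(\theta z^2)$.)
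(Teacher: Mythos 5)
Your $3$-colouring is correct, and is a close cousin of the one in the paper (which uses dyadic blocks $[2^i,2^{i+1})$ and assigns colours so that $c_i\notin\{c_{\lfloor i/2\rfloor},c_{\lfloor i/2\rfloor+1}\}$); your faster-growing blocks let you colour cyclically mod $3$ instead, and the inequalities you need, $2a_{k+1}\le a_k^2$ and $a_{\ell+1}^2<a_{\ell+3}$, do hold. In the second half, the observation that if $z$ is red, $z+1$ is blue and both exceed $N_0$, then $r\mapsto r+(2z+1)$ maps $R\cap[1,z^2)$ into $R$, is also correct and genuinely nice. But the passage from this family of inclusions to eventual periodicity, which you flag as ``delicate'', is not a delicate step: it is a genuine gap, and it is where essentially the entire difficulty of the theorem lives.

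The problem is quantitative. To move an element $n$ of $R$ you may only use parameters $z$ with $z^2>n$, so every available move (forward by $2z+1$ with $z\in Z$, or backward by $2w+1$ via the contrapositive of the blue inclusion, which requires $(w+1)^2>n$) has size exceeding roughly $2\sqrt{n}$. Hence no fixed shift $L$ is ever directly available at large heights: to compare $\chi(n)$ with $\chi(n+L)$ you must compose an up-journey $\sum_i(2z_i+1)$ with a down-journey $\sum_j(2w_j+1)$ whose difference is exactly $L$, with $z_i\in Z$ and $w_j\in Z'$ two infinite sets about whose additive structure you know nothing, and with every intermediate height admissible. There is no reason such an exact cancellation exists for any fixed $L$; if $Z$ and $Z'$ are very lacunary the realisable net shifts at height $n$ need not include any fixed nonzero integer. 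The gcd/Frobenius argument you invoke has the same defect: it expresses each large multiple of $d$ as a $\Z_{\ge 0}$-combination of finitely many \emph{fixed} generators $2\zeta_i+1$, but those generators become unusable once $n>\max_i\zeta_i^2$, and replacing them by larger ones changes both the gcd and the Frobenius threshold as $n$ grows. This is precisely why the paper cannot work with isolated shifted points: it first proves, via the arithmetic regularity lemma, Weyl-type estimates and the Lagarias--Odlyzko--Shearer theorem, that one colour class contains a whole positive-proportion progression $\P([1,1+c];M,q)$, and only then bootstraps --- using a result on gaps between constrained sums of two squares --- to the statement that all large multiples of $q$ lie in that class. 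Your circle-method alternative runs into the same wall (controlling correlation of the colouring with quadratic phases on the major arcs is exactly what that machinery is for), so as written the second half does not prove the theorem.
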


The proof of the first statement is rather simple. It is given in Section \ref{sec2}. By contrast, the proof that every 2-colouring has infinitely many monochromatic solutions to $x + y = z^2$ is complicated and involves a surprisingly large number of tools from additive combinatorics and number theory. It occupies the remaining sections of the paper. We outline the argument now.

If $\N = V \cup W$ then let us assume that there are infinitely many $N$ such that $|V \cap [N,2N)| \geq N/2$. If this is not the case then a corresponding statement holds for $W$ and we may switch the roles of $V$ and $W$ in what follows. Suppose that there are no solutions to $x + y = z^2$ in either $V$ or $W$. By a fairly elaborate sequence of arguments involving the arithmetic regularity lemma  as well as certain Fourier-analytic and diophantine arguments, as well as a deep result of Lagarias, Odlyzko and Sloane, we use this to show that for some $q \in N$ and $c > 0$ the set $W$ contains the progression $\P([1, 1+c]; M, q) := \{ n \in \Z : M \leq n \leq (1 + c)M, n \equiv 0 \md{q}\}$ for infinitely many integers $M$. The details of these arguments may be found in Sections \ref{sec4} and \ref{sec5}, certain preliminary results having been assembled in Section \ref{sec3}. The proof is concluded in Section \ref{sec7} by performing an iterative argument to get a collection of further progressions inside $W$, eventually showing that all sufficiently large multiples of $q$ lie in $W$. An important ingredient here is a result concerning gaps between sums of two squares with certain constraints, proven in Section \ref{sec6}.

The fact that all sufficiently large multiples of $q$ lie in $W$ leads immediately to a contradiction, since $W$ then obviously contains infinitely many solutions to $x + y = z^2$.

We make heavy use of smooth cutoff functions in the latter half of the paper. The properties and constructions of these are recalled in Appendix \ref{bump-appendix}.

We remark that our arguments in fact give the following, logically stronger, result: if $N$ is large then any $2$-colouring of $[N, CN^8]$ has a monochromatic solution to $x + y = z^2$. Here $C$ is an absolute constant which could be computed in principle, but which would be astronomically large due to the application of the regularity lemma. We have found it easier to write the paper in such a way that this result does not immediately follow from our arguments as written, and we leave the interested reader to verify this statement.

Let us remark on the nice work of Khalfallah and Szemer\'edi \cite{sk} which, despite its rather similar title, concerns a somewhat different problem. They show that any finite colouring of $\N$ contains a solution to $x + y =z^2$ with $x$ and $y$ having the same colour (but not necessarily $z$).

We also remark that for the modular version of the problem the answer is very different. Indeed, the second author \cite{lindqvist} has shown that if $p > p_0(k)$ is a prime and if $\Z/p\Z$ is $k$-coloured, then there are $\gg_k p^2$ monochromatic solutions to $x + y = z^2$. 

\emph{Notation.} We collect here some notation used in the paper. Most of it is standard. If $X$ is a finite set then $\E_{x \in X}$ means $\frac{1}{|X|}\sum_{x \in X}$. For $t \in \R$, we write $e(t) := e^{2\pi i t}$. We write $\T = \R/\Z$ and $\T^d = (\R/\Z)^d$. We define a ``norm'' $\Vert \cdot \Vert_{\T^d} : \T^d \rightarrow [0,\frac{1}{2}]$ by defining $\Vert x \Vert_{\T^d} = \Vert \tilde x \Vert_{\ell^{\infty}(\R^d)}$, where $\tilde x$ is the unique element of $(-\frac{1}{2}, \frac{1}{2}]^d$ which projects to $x$ under the natural homomorphism from $\R^d$ to $\T^d$. 
The notation $X = O(Y)$ and $X \ll Y$ both mean that $X \leq CY$ for some constant $C$. Unless dependence on other parameters is indicated explicitly (for example $X \ll_{\eps} Y$), $C$ will be an absolute constant. 

The notation $\widehat{f}$ always denotes Fourier transform. At various points in the paper $f$ may be a function on $\Z$, $\R$ or $\T^d$. The definitions we are using are recalled in the text when there is any danger of confusion.

It is convenient to introduce a piece of notation which is less standard, but very useful. If $\Lambda \subset \N$ is a set of integers then we write $\sqrt{\Lambda} := \{n \in \N : n^2 \in \Lambda\}$ (this is \emph{not} the same as $\{\sqrt{n} : n \in \Lambda\}$).If $A \subset \N$ is a set, we write $2A = A + A := \{a + a' : a, a' \in A\}$. We will sometimes use notation such as $2\sqrt{2A}$, which means $\sqrt{A + A} + \sqrt{A + A}$.

Finally, as hinted above, when $I \subset \R$ is a closed interval we write $\P(I; N,q) := \{n \in \Z : \frac{n}{N} \in I, q | n\}$.

\section{A 3-colouring}\label{sec2}
In this short section we establish the easy part of Theorem \ref{mainthm}.  That is, we exhibit a 3-colouring of $\N$ for which the only monochromatic solution to $x + y = z^2$ is the trivial solution $x = y = z = 2$. We colour all the points in each dyadic block
\[A_i = \{n\in \N: 2^i\leq n< 2^{i+1}\}, \]
$i = 0,1,2,\dots$, in one colour $c_i$. We assign $c_0, c_1,c_2$ to be distinct, and then assign the colours $c_i$, $i \geq 3$, inductively in such a way that $c_i \notin \{c_{\lfloor i/2\rfloor}, c_{\lfloor i/2 \rfloor + 1}\}$. Note that this is possible since $\lfloor i/2 \rfloor + 1 < i$ for $i \geq 3$. 

Assume now that $x,y,z \in \N$ have the same colour and that $x + y = z^2$. Without loss of generality we may assume that $x \leq y$. Let $i \in \{0,1,2,\dots\}$ be such that $y \in A_i$.  Then $2^i < x+y< 2^{i+2}$, and hence $2^{i/2} < z < 2^{(i + 2)/2}$. Since $i/2 \geq \lfloor i/2\rfloor$ and $(i+2)/2 \leq \lfloor i/2\rfloor + 2$, it follows that  $z\in A_{[i/2]} \cup A_{[i/2]+1}$. By construction, the only way that such a $z$ can have the same colour as $y$ is if $i \in \{0,1,2\}$, in which case $x \leq y < 8$, and so $z = 2$ or $3$. An easy case check confirms that $x = y = z = 2$.

\section{Results from the literature}\label{sec3}

The rest of the paper is devoted to the harder part of Theorem \ref{mainthm}. In this section we assemble some basic ingredients from the literature.

We will need a version of Weyl's inequality, which gives a bound for exponential sums $\sum_{n \leq N} e(p(n))$ with $p : \N \rightarrow \R$ a polynomial. The usual proof of Weyl's inequality leads to a factor of $N^{o(1)}$ which renders the result worse than trivial in certain circumstances (the ``major arcs''). This is of no consequence in typical applications, which concern minor arc estimates in Waring's problem. Here, however, it is important to have an ``$\eps$-free'' result. Such results are well-known to experts, but it is hard to locate a convenient reference. Wooley \cite{wooley-free} discusses the pure power case (that is, sums of the form $\sum_{n \leq N} e(\alpha n^k)$), and it is likely that the same methods apply in greater generality, though the verification of this would involve a foray into the inner workings of \cite[Chapter 4]{vaughan-book}. 

A self-contained source for the purposes of this paper is \cite[Lemma 4.4]{green-tao-nilratner} (described in that paper as a ``reformulation'' of Weyl's inequality, a slightly inaccurate statement). Here is the statement.
\begin{proposition}\label{weyl}
Let $k \in \N$. Then there is a constant $C_k$ such that the following is true. Let $0 < \delta < 1/2$. Let $g : \Z \rightarrow \R$ be a polynomial of degree $k$ with leading coefficient $\alpha_k$ \textup{(}that is, $g(n) = \alpha_k n^k + \dots$\textup{)}. Suppose that $|\E_{n \in I} e(g(n))| \geq \delta$, where $I \subset \Z$ is a discrete interval. Then there is some $q \in \N$, $q \leq \delta^{-C_k}$, such that $\Vert q \alpha_k \Vert_{\R/\Z} \leq \delta^{-C_k}|I|^{-k}$.\end{proposition}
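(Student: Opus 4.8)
The plan is to run the classical Weyl differencing argument, but to organise the combinatorial step so as to avoid the divisor bound $d_{k-1}(m)\ll_\eps m^\eps$ that is the usual source of the loss $|I|^{o(1)}$; and then to run a separate ``major arc'' argument to upgrade the resulting estimate from resolution $|I|^{-1}$ to $|I|^{-k}$. Write $N=|I|$; translating $I$ we may assume $I=\{1,\dots,N\}$ (this changes $g$ but not $\alpha_k$), and we may assume $N$ exceeds any fixed power of $\delta^{-1}$ we wish, since otherwise $\delta^{-C_k}|I|^{-k}\geq\tfrac12$ and $q=1$ works.

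\emph{Phase 1 (minor arc step).} Apply Weyl differencing $k-1$ times to $\sum_{n\leq N}e(g(n))$. Keeping the sums intact (i.e. not bounding the number of ways an integer is a product) this step is entirely $\eps$-free and yields
\[
\delta^{2^{k-1}}N^{k}\ll\sum_{|h_1|,\dots,|h_{k-1}|\leq N}\min\bigl(N,\Vert k!\,\alpha_k h_1\cdots h_{k-1}\Vert_{\R/\Z}^{-1}\bigr).
\]
Splitting the $\min$ at height $\asymp\delta^{-2^{k-1}}N^{-1}$ and bounding the remaining terms trivially, we obtain a set of $\gg\delta^{O_k(1)}N^{k-1}$ tuples $(h_1,\dots,h_{k-1})\in[-N,N]^{k-1}$ with $\Vert k!\,\alpha_k h_1\cdots h_{k-1}\Vert\leq\delta^{-O_k(1)}N^{-1}$. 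Now peel variables off one at a time: after discarding the few tuples with a small coordinate, pigeonhole to fix $h_2,\dots,h_{k-1}$ so that $h_1$ still ranges over $\gg\delta^{O_k(1)}N$ values, and extract from these a small difference $q'\ll\delta^{-O_k(1)}$ with $\Vert k!\,\alpha_k h_2\cdots h_{k-1}q'\Vert\leq\delta^{-O_k(1)}N^{-1}$; iterating, one reaches $q\ll\delta^{-O_k(1)}$ with $\Vert q\alpha_k\Vert\leq\delta^{-O_k(1)}N^{-1}$. The key point is that the objects being pigeonholed are genuine integer tuples, not products, so no divisor estimate intervenes. Equivalently, $\alpha_k=a/q+\beta$ with $(a,q)=1$, $q\ll\delta^{-O_k(1)}$ and $|\beta|\ll\delta^{-O_k(1)}N^{-1}$.

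\emph{Phase 2 (major arc step).} It remains to improve this to $|\beta|\ll\delta^{-O_k(1)}N^{-k}$, since then $\Vert q\alpha_k\Vert=q|\beta|\ll\delta^{-O_k(1)}N^{-k}$. Complete $\sum_{n\leq N}e(g(n))$ into residue classes mod $q$: writing $n=mq+r$, the leading term $(a/q)n^k$ collapses to a constant $e((a/q)r^k)$ on each class, so
\[
\Bigl|\sum_{n\leq N}e(g(n))\Bigr|\leq q\max_{1\leq r\leq q}\Bigl|\sum_{m\leq N/q}e(\psi_r(m))\Bigr|,
\]
where $\psi_r$ has degree $k$ in $m$ with leading coefficient $\beta q^k$. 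This leading coefficient is tiny (of size $\ll\delta^{-O_k(1)}N^{-1}$), so the inner sum oscillates, and a van der Corput / stationary-phase estimate gives $|\sum_{m\leq N/q}e(\psi_r(m))|\ll_k(N/q)(1+N^k|\beta|)^{-c_k}$ for some $c_k>0$ depending only on $k$. Feeding in $|\sum_{n\leq N}e(g(n))|\geq\delta N$ forces $1+N^k|\beta|\ll\delta^{-O_k(1)}$, which completes the proof.

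Phase 1 is routine once one resolves not to use the divisor bound. The main obstacle is Phase 2: the inner sum has a tiny leading coefficient but \emph{uncontrolled} lower-order coefficients (built from $\alpha_{k-1},\dots,\alpha_1$, about which we know nothing), so one cannot apply stationary phase to the approximating integral directly. One must instead use van der Corput's $k$-th derivative test, which depends only on the top coefficient, or a Weyl-type bound for the inner sum, and verify that the smallness of the leading coefficient produces a genuine power saving in $N$ so that the customary $N^{\eps}$ losses of such estimates become harmless. It is precisely this ``$\eps$-free major arc'' bookkeeping that is carried out in \cite[Lemma 4.4]{green-tao-nilratner}.
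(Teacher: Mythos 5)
The paper does not actually prove this proposition; it quotes \cite[Lemma 4.4]{green-tao-nilratner} as a self-contained source and only remarks that a general interval $I$ reduces to $I=[N]$ by translation. So the comparison is with the argument of that reference. Your Phase 1 is essentially sound: keeping the Weyl-differenced sum as a sum over integer tuples and peeling off one variable at a time by pigeonhole does avoid the divisor function, and with routine care (pigeonholing in the value of the extracted difference $q'$ at each stage so that a positive proportion of the surviving tuples passes to the next stage) it yields $q\ll\delta^{-O_k(1)}$ with $\Vert q\alpha_k\Vert_{\R/\Z}\ll\delta^{-O_k(1)}N^{-1}$.

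The gap is in Phase 2, which is where the real content of the proposition lies (the upgrade from precision $N^{-1}$ to $N^{-k}$), and neither of the substitutes you offer for stationary phase works. A Weyl-type bound for the inner sum is vacuous here: its leading coefficient $\beta q^k$ lies within $O(\delta^{-O_k(1)}N^{-1})$ of the rational $0/1$, which is exactly the major-arc regime in which Weyl's inequality gives nothing (and re-running Phase 1 on the inner sum merely reproduces the precision $N^{-1}$). Van der Corput's $k$-th derivative test, with $\lambda\asymp|\beta|q^k$ and $M\asymp N/q$, gives $\sum_{m\le M}e(\psi_r(m))\ll M\lambda^{1/(2^k-2)}+M^{1-2^{2-k}}\lambda^{-1/(2^k-2)}$; a sum of size $\ge\delta M$ therefore forces only $\lambda\ll\delta^{-O_k(1)}M^{-(4-2^{3-k})}$. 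This exponent equals $k$ for $k=2,3$ but is strictly less than $k$ for every $k\ge 4$ (for $k=4$ it gives $M^{-7/2}$ rather than $M^{-4}$), and $k=4$ is one of the two cases the paper actually needs (it is used in Lemma \ref{lem5.2}). So Phase 2 as written cannot deliver the stated conclusion. The proof in \cite{green-tao-nilratner} is not a minor-arc/major-arc split at all: it proves the stronger statement $\Vert q\alpha_i\Vert_{\R/\Z}\ll\delta^{-O_k(1)}N^{-i}$ for \emph{all} coefficients by induction on the degree via van der Corput differencing. The inductive hypothesis gives, for $\gg\delta^{O_k(1)}N$ values of $h\le N$, a bound $\Vert q(k\alpha_kh+\alpha_{k-1})\Vert_{\R/\Z}\ll\delta^{-O_k(1)}N^{-(k-1)}$; a recurrence lemma then converts ``the orbit $h\mapsto qk\alpha_kh$ visits a fixed ball of radius $\rho=\delta^{-O_k(1)}N^{-(k-1)}$ for $\gg\delta^{O_k(1)}N$ values of $h\le N$'' into $\Vert q'\alpha_k\Vert_{\R/\Z}\ll\delta^{-O_k(1)}\rho/N$. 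It is this division of the ball radius by the range of $h$ --- not a derivative test --- that produces the extra factor $N^{-(k-1)}$, and this is the idea missing from your write-up.
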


We will need this result in the cases $k = 2$ and $k = 4$. The proof in the latter case is essentially as hard as that of the general case. We remark that in Lemma \cite[Lemma 4.4]{green-tao-nilratner} the result is stated with $I = [N]$, but the general case follows trivially from this by translation (which does not affect the leading coefficient $\alpha_k$).

The following definition is relevant to much of the paper.

\begin{definition}\label{irrational-def}
Suppose that $\theta \in \R^d$. Let $N \geq 1$ be an integer and let $A > 0$ be some real parameter. We say that $\theta$ is $(A, N)$-irrational if whenever $\mathbf{r} \in \Z^d \setminus \{0\}$ and $\Vert \mathbf{r} \Vert_1 \leq A$ we have $\| \mathbf{r} \cdot \theta \|_{\T} \geq A/N$.
\end{definition}

We record a corollary of Proposition \ref{weyl}, phrased in the language of this definition. This corollary is the variant of Weyl's inequality that we have found to be most useful in this paper. 

\begin{corollary}\label{cor-weyl-1}
Let $k, N \in \N$. Suppose that $I \subset \Z$ is a \textup{(}discrete\textup{)} interval of length $\leq N^{1/k}$. Suppose that $\theta \in \R^d$ is $(A, N)$-irrational, and suppose that $\mathbf{r} \in \Z^d \setminus \{0\}$. Then
\[ |\sum_{n \in I} e(\mathbf{r} \cdot \theta n^k + \ldots)| \leq N^{1/k} \Vert \mathbf{r} \Vert_1 A^{-1/C_k}.\]  Here, $\ldots$ denotes polynomial terms in $n$ of degree $k-1$ or lower, and the estimate is uniform in the choice of these terms. 
\end{corollary}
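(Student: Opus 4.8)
The plan is to deduce Corollary \ref{cor-weyl-1} from Proposition \ref{weyl} by contraposition. Suppose the claimed bound fails, so that
\[ \Bigl| \sum_{n \in I} e(\mathbf{r} \cdot \theta\, n^k + \ldots) \Bigr| > N^{1/k} \Vert \mathbf{r} \Vert_1 A^{-1/C_k}. \]
Write $g(n) = (\mathbf{r}\cdot\theta) n^k + \ldots$, a real polynomial of degree $k$ (or lower, but we may assume the leading coefficient is $\mathbf{r}\cdot\theta$; if $\mathbf{r}\cdot\theta$ happened to be such that $g$ has lower degree this only helps) with leading coefficient $\alpha_k = \mathbf{r}\cdot\theta$. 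Since $|I| \leq N^{1/k}$, dividing through by $|I|$ gives $|\E_{n \in I} e(g(n))| \geq \delta$ with $\delta := \Vert \mathbf{r}\Vert_1 A^{-1/C_k}$ (we may assume $\delta < 1/2$, since otherwise $A \ll \Vert\mathbf{r}\Vert_1^{C_k}$ and the bound is easy to arrange, or rather the hypothesis that $\theta$ is $(A,N)$-irrational forces things — I will handle this degenerate range separately, enlarging the constant $C_k$ in the statement if necessary).

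Next I would apply Proposition \ref{weyl} with this $\delta$: there is $q \in \N$ with $q \leq \delta^{-C_k}$ and $\Vert q \alpha_k \Vert_{\T} \leq \delta^{-C_k} |I|^{-k}$. Since $|I| \leq N^{1/k}$ we get $|I|^{-k} \geq N^{-1}$... wait, that is the wrong direction; $|I| \leq N^{1/k}$ gives $|I|^{-k} \geq N^{-1}$, so this bound on $\Vert q\alpha_k\Vert_\T$ is weak. The point is instead that $|I|^{-k}$ could be as large as we like if $I$ is short, but Proposition \ref{weyl} as I restate it should really be applied with the length of $I$; the correct reading is that $\Vert q\alpha_k\Vert_\T \leq \delta^{-C_k}|I|^{-k}$ and we want to contradict $(A,N)$-irrationality, which concerns the scale $N$. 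So I would bound $|I|^{-k} \leq 1$ trivially (as $|I| \geq 1$), giving $\Vert q\alpha_k \Vert_\T \leq \delta^{-C_k}$ — still too weak. The resolution: one applies Weyl to the polynomial $n \mapsto g(n)$ but rescaled, or one simply uses the hypothesis $|I| \le N^{1/k}$ only to guarantee $\delta < 1/2$ is consistent, and feeds in the genuinely useful information that Proposition \ref{weyl} gives $\Vert q\alpha_k\Vert_\T \le \delta^{-C_k}|I|^{-k} \le \delta^{-C_k} N^{-1}$ precisely when $|I| \ge N^{1/k}$... Let me instead take the honest route: I will apply Proposition \ref{weyl} directly and obtain $q \le \delta^{-C_k}$ with $\Vert q\alpha_k\Vert_\T \le \delta^{-C_k}|I|^{-k}$, then observe $|I|^{-k} \le N^{-1}\cdot(N^{1/k}/|I|)^k$; since $|I| \le N^{1/k}$ the factor $(N^{1/k}/|I|)^k \ge 1$, so this is the wrong way. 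The clean fix is that the corollary should be proven with $\delta^{-C_k}|I|^{-k}$ replaced by noting $|I|^{-k} \leq |I|^{-1} \cdot 1$ is not it either. I will simply carry $|I|^{-k}$ through and note that what we actually need downstream — and what the corollary asserts — follows because $\Vert q\alpha_k\Vert_\T \leq \delta^{-C_k} |I|^{-k}$ and $|I|^k \le N$, hence $\Vert q\alpha_k\Vert_\T \le \delta^{-C_k}/1$; combined with $q \le \delta^{-C_k}$, I get, with $\mathbf{s} := q\mathbf{r}$, that $\Vert \mathbf{s}\cdot\theta\Vert_\T = \Vert q(\mathbf{r}\cdot\theta)\Vert_\T = \Vert q\alpha_k\Vert_\T$ is small while $\Vert\mathbf{s}\Vert_1 \le q\Vert\mathbf{r}\Vert_1 \le \delta^{-C_k}\Vert\mathbf{r}\Vert_1$ is not too large; choosing the constant $C_k$ in the \emph{statement} large enough relative to the constant $C_k$ of Proposition \ref{weyl}, one checks $\Vert\mathbf{s}\Vert_1 \le A$ and $\Vert\mathbf{s}\cdot\theta\Vert_\T < A/N$, contradicting $(A,N)$-irrationality. (Here the bound $|I| \le N^{1/k}$, equivalently $|I|^{-k} \ge N^{-1}$, is used the right way around: we want $\Vert q\alpha_k\Vert_\T$ to be at most something of size $A/N$, and $\delta^{-C_k}|I|^{-k}$ with $\delta$ a small power of $A^{-1}$ and $|I|^{-k} \le 1$ gives a bound of the form $\Vert\mathbf{r}\Vert_1 A^{-c}$, which we then need $\le A/N$; so in fact we must also use $|I|^{-k}$ more carefully, and the role of $N^{1/k}$ is genuinely to make $|I|^{-k} \ge N^{-1}$ so that the bound $\delta^{-C_k}|I|^{-k}$ is comparable to $\delta^{-C_k}N^{-1}$ — this is the point I need to get exactly right.)

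So, concretely, the steps are: (i) contrapositive setup, identifying $g$, $\alpha_k = \mathbf{r}\cdot\theta$, and $\delta = \Vert\mathbf{r}\Vert_1 A^{-1/C_k}$, with a separate trivial treatment of the degenerate range $\delta \ge 1/2$; (ii) apply Proposition \ref{weyl} to get $q \le \delta^{-C_k}$ with $\Vert q\alpha_k\Vert_\T \le \delta^{-C_k}|I|^{-k}$; (iii) set $\mathbf{s} = q\mathbf{r} \in \Z^d\setminus\{0\}$, estimate $\Vert\mathbf{s}\Vert_1 \le \delta^{-C_k}\Vert\mathbf{r}\Vert_1$ and $\Vert\mathbf{s}\cdot\theta\Vert_\T \le \delta^{-C_k}|I|^{-k} \le \delta^{-C_k}N^{-1}$ using $|I| \le N^{1/k}$; (iv) choose the exponent $C_k$ in the statement large enough (as a function of the $C_k$ of Proposition \ref{weyl} and of $k$) that these force $\Vert\mathbf{s}\Vert_1 \le A$ and $\Vert\mathbf{s}\cdot\theta\Vert_\T < A/N$, contradicting $(A,N)$-irrationality of $\theta$; (v) conclude the original inequality. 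The main obstacle — really the only non-bookkeeping point — is tracking the powers of $A$ and $\delta$ in step (iv) so that a single choice of $C_k$ works uniformly, and making sure the hypothesis $|I| \le N^{1/k}$ is used at exactly the right place to convert $|I|^{-k}$ into $N^{-1}$; everything else is a direct substitution into Proposition \ref{weyl}. Note also that the lower-order terms in $g$ play no role, since Proposition \ref{weyl} only constrains the leading coefficient and is uniform in the rest — this gives the claimed uniformity in the statement.
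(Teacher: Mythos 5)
Your overall strategy---feed the polynomial into Proposition \ref{weyl} and then play the resulting rational approximation to $\mathbf{r}\cdot\theta$ off against the $(A,N)$-irrationality of $\theta$---is the same as the paper's. But the concrete plan you commit to in steps (i)--(iv) has a genuine gap, and it occurs at exactly the point you flag as ``the point I need to get exactly right''. In step (iii) you assert $|I|^{-k}\le N^{-1}$ ``using $|I|\le N^{1/k}$''; the hypothesis gives the \emph{reverse} inequality $|I|^{-k}\ge N^{-1}$, as you yourself observe earlier in the proposal. With your choice $\delta=\Vert\mathbf{r}\Vert_1A^{-1/C_k}$, Proposition \ref{weyl} only yields $\Vert q\,\mathbf{r}\cdot\theta\Vert_{\T}\le\delta^{-C_k}|I|^{-k}\le A|I|^{-k}$, and since $|I|^{-k}\ge N^{-1}$ this bound is $\ge A/N$: it is perfectly consistent with $(A,N)$-irrationality, so no contradiction is available. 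Enlarging the exponent $C_k$ in the statement cannot repair this, because the obstruction is the direction of an inequality in $|I|$, not the size of a constant.

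The missing idea is that you have discarded a factor you need. If the conclusion fails, the normalized sum is not merely $\ge\Vert\mathbf{r}\Vert_1A^{-1/C_k}$ but $\ge\delta_0:=(N^{1/k}/|I|)\,\Vert\mathbf{r}\Vert_1A^{-1/C_k}$, and the factor $N^{1/k}/|I|\ge1$ is exactly what you need: applying Proposition \ref{weyl} with $\delta_0$ gives
\[ \Vert q\,\mathbf{r}\cdot\theta\Vert_{\T}\le\delta_0^{-C_k}|I|^{-k}=\Vert\mathbf{r}\Vert_1^{-C_k}A\cdot|I|^{C_k-k}N^{-C_k/k}=\Vert\mathbf{r}\Vert_1^{-C_k}\,\frac{A}{N}\,\Big(\frac{|I|}{N^{1/k}}\Big)^{C_k-k}\le \frac{A}{N},\]
using $C_k\ge k$ and $|I|\le N^{1/k}$ at the last step, while $q\Vert\mathbf{r}\Vert_1\le\delta_0^{-C_k}\Vert\mathbf{r}\Vert_1\le A\Vert\mathbf{r}\Vert_1^{1-C_k}\le A$; this (after the routine adjustment of constants to handle the strict/non-strict boundary and the range $\delta_0\ge1/2$) does contradict irrationality. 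Equivalently, and closer to how the paper organizes it, let $\delta$ be the normalized sum, deduce the dichotomy ``$q\Vert\mathbf{r}\Vert_1\ge A$ or $\delta^{-C_k}|I|^{-k}\ge A/N$'', solve each alternative for $\delta$, and only multiply by $|I|\le N^{1/k}$ at the very end: in the second alternative $\delta\le(N/(A|I|^k))^{1/C_k}$, whence $\delta|I|\le|I|^{1-k/C_k}(N/A)^{1/C_k}\le N^{1/k}A^{-1/C_k}$. (Note that the paper's one-line deduction ``in case (2) we have $\delta^{-C_k}\ge A$'' requires the same careful use of $|I|\le N^{1/k}$ as just displayed; the conclusion is correct but the factor $|I|^{1-k/C_k}$ must be tracked.) Either repair is routine once the factor $N^{1/k}/|I|$ is retained, but as written your steps (iii)--(iv) do not go through. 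Your remaining points---uniformity in the lower-order terms, reduction to the leading coefficient---are fine.
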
 
\begin{proof}
Suppose that the sum is $\geq \delta |I|$. Then, by Proposition \ref{weyl} there is some $q \in \N$, $q \leq \delta^{-C_k}$, such that $\Vert q \mathbf{r} \cdot \theta\Vert_{\R/\Z} \leq \delta^{-C_k} |I|^{-k}$. Since $\theta$ is $(A, N)$-irrational, we have either (1) $q\Vert \mathbf{r} \Vert_1\geq A$ or (2) $\delta^{-C_k} |I|^{-k} \geq A/N$. In case (1), the bound on $q$ implies that $\delta^{-C_k} \Vert \mathbf{r} \Vert_1 \geq A$. In case (2), we have $\delta^{-C_k} \geq A$. Hence in either case we have $\delta^{-C_k} \Vert \mathbf{r} \Vert_1 \geq A$, and hence $\delta \leq (\Vert \mathbf{r} \Vert_1 /A)^{1/C_k}$. The result follows (in fact with $\Vert \mathbf{r} \Vert_1$ replaced by the smaller quantity $\Vert \mathbf{r} \Vert_1^{1/C_k}$).
\end{proof}

Turning to a different type of ingredient of the paper, we require the following estimate.

\begin{proposition}\label{ell-6}
Let $S \subset \{1,\dots, N\}$ be any set of squares. For $t \in \R/\Z$, write $\widehat{1}_S(t) := \sum_{n \in S} e(t n)$. Then $\int^1_0 |\widehat{1}_S(t)|^6 dt \ll N^2$.
\end{proposition}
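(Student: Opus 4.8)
The plan is to interpret the integral combinatorially. Expanding the sixth power,
\[
\int_0^1 |\widehat{1}_S(t)|^6 \, dt = \#\{(n_1,n_2,n_3,n_4,n_5,n_6) \in S^6 : n_1 + n_2 + n_3 = n_4 + n_5 + n_6\},
\]
so the task reduces to bounding the number of additive sextuples in $S$, equivalently the number of solutions to $a + b + c = a' + b' + c'$ with all six variables squares in $\{1,\dots,N\}$. Writing $a = x^2$, etc., with $x,y,z,x',y',z' \ll N^{1/2}$, I want to show this count is $\ll N^2$.

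First I would fix the value $m = n_1 + n_2 + n_3 = n_4 + n_5 + n_6$, which ranges over $\ll N$ possible integers, each of size $\ll N$. For each such $m$, the count factors as $r_3(m)^2$, where $r_3(m)$ is the number of ways to write $m$ as an ordered sum of three squares each at most $N$. Thus it suffices to show $\sum_{m \ll N} r_3(m)^2 \ll N^2$. Since trivially $\sum_m r_3(m) = |S|^3 \leq N^{3/2}$, a pointwise bound $r_3(m) \ll N^{1/2+o(1)}$ would \emph{not} quite suffice (it gives $N^{2+o(1)}$, and more importantly has an $\eps$-loss we may wish to avoid), so I would instead bound $r_3(m)^2$ by the number of solutions to $x^2 + y^2 + z^2 = (x')^2 + (y')^2 + (z')^2 = m$ and use an $\ell^2$/divisor argument rather than a pointwise one.

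The cleanest route: count solutions to $x^2 + y^2 + z^2 = (x')^2 + (y')^2 + (z')^2$ directly. Rearranging, $(x-x')(x+x') = (y'-y)(y'+y) + (z'-z)(z'+z)$. A standard approach here is to use the classical bound $r_2(k) \ll k^{o(1)}$ for representations as a sum of two squares together with a careful summation, but to get a clean $\ll N^2$ without an $\eps$, I would instead proceed as follows. The number of sextuples with (say) $x = x'$ is $\ll N^{1/2} \cdot \#\{y,z,y',z' : y^2+z^2 = (y')^2 + (z')^2\} \ll N^{1/2} \cdot N^{1+o(1)}$, using the sum-of-two-squares count, which is still slightly too weak. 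So the real content is the generic case; here I would parametrise by $d = x - x'$ and $e = x + x'$ (so $de$ equals a fixed integer once $y,z,y',z'$ are chosen, with $d,e$ of the same parity), giving $\ll N^{o(1)}$ choices of $(x,x')$ once the right-hand side is pinned down, and then note the right-hand side ranges over $\ll N$ values attained $\ll N$ times each by the pair $(y^2+z^2, (y')^2+(z')^2)$ — more precisely $\sum_{k \ll N} r_2(k) r_2(k) \ll N \log N$ is not good enough either.

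The honest main obstacle, then, is removing logarithmic and $N^{o(1)}$ losses: naive divisor-bound arguments give $N^{2+o(1)}$ rather than $O(N^2)$. The fix I would use is an averaging/dyadic argument exploiting that we only need the bound on \emph{average} over $m$, not pointwise: one shows $\sum_{m} r_3(m)^2$ counts lattice points on a fixed quadric in $6$ variables inside a box of side $N^{1/2}$, and such a count is $\ll N^2$ by a geometry-of-numbers argument (the quadric $\sum x_i^2 = \sum x_i'^2$ has the origin removed and one can slice by level sets, using that the number of lattice points on a sphere of radius $\sqrt{m}$ in three variables, summed in $\ell^2$ against itself, is governed by the number of representations of $0$ as $\sum \pm x_i^2$ which is classically $O(N^2)$). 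Concretely I expect the cleanest proof to cite or reproduce the classical fact that the number of solutions to $x_1^2 + x_2^2 + x_3^2 = x_4^2 + x_5^2 + x_6^2$ with all variables $\ll N^{1/2}$ is $O(N^2)$ — this is essentially the statement that $\int_0^1 |\sum_{n \ll N^{1/2}} e(t n^2)|^6\, dt \ll N^2$, the classical sixth-moment (Hua-type) bound for the quadratic Weyl sum, restricted to a sub-sum over $n \in \sqrt{S}$ — and the restriction to a subset only decreases the integral, so the bound is inherited. That last observation — that passing to the subset $S$ of squares costs nothing because $\widehat{1}_S$ is dominated in $L^6$ by the full Gauss sum sixth moment — is likely the slickest way to conclude, and I would organise the proof around it.
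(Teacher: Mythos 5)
Your final paragraph is exactly the paper's proof: expand the sixth moment as the count of sextuples $n_1+n_2+n_3=n_4+n_5+n_6$ in $S$, note that this count can only increase when $S$ is enlarged to the set of all squares up to $N$, and then invoke the classical $\eps$-free sixth-moment bound for the quadratic Weyl sum, which the paper likewise dispatches as ``a well-known consequence of the Hardy--Littlewood method.'' The intermediate detours (divisor-bound and slicing arguments) are correctly recognised as dead ends and are not needed, so the proposal is correct and takes essentially the same route as the paper.
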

\begin{proof}
It is easy to see that the integral is $\sum_{x \leq 3N} r_{3,S}(x)^2$, where $r_{3,S}(x)$ is the number of ways of writing $x$ as $n_1 + n_2 + n_3$ with $n_1, n_2, n_3 \in S$. This quantity is obviously largest when $S$ is the set of \emph{all} squares $\leq N$. In this case, the stated bound is a well-known consequence of the Hardy-Littlewood method.
\end{proof}
\emph{Remark.} Using more advanced methods of harmonic analysis (related to the Tomas-Stein restriction theorem) one can show a bound $\int^1_0 |\widehat{1}_S(t)|^q \ll_q N^{q/2 - 1}$ for any $q > 4$.\vspace{8pt}

Finally, we will also use the following result of Lagarias, Odlyzko and Shearer \cite{los1}. 

\begin{proposition}\label{sumset-residue}
Suppose that $S \subset \Z/q\Z$, where $q$ is a positive integer, and that $|S| > \frac{11}{32}q$. Then $S + S$ contains a quadratic residue modulo $q$.
\end{proposition}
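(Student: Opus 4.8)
The plan is to reduce the statement to the prime-power case by multiplicativity of quadratic residues via CRT, then handle prime powers by a density/counting argument. First I would observe that if $q = \prod_i p_i^{a_i}$, then $x$ is a quadratic residue mod $q$ if and only if $x$ is a quadratic residue mod each $p_i^{a_i}$; so by the Chinese Remainder Theorem it suffices to find $s_1 + s_2 \in S + S$ whose reduction mod each $p_i^{a_i}$ is a QR. This is not immediate, however, since $S + S$ sits inside $\Z/q\Z$ and we cannot freely choose the residues in each coordinate; the honest approach is a Fourier/$L^2$ argument directly on $\Z/q\Z$. Let $Q$ denote the set of quadratic residues mod $q$ (including $0$, or treating the squares $\{n^2 : n \in \Z/q\Z\}$), and let $r(x)$ be the number of representations $x = s_1 + s_2$ with $s_i \in S$. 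I would estimate $\sum_{x \in Q} r(x)$ from below and show it is positive.

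The key step is to write $\sum_{x \in Q} r(x) = \sum_t \widehat{1_S}(t)^2 \overline{\widehat{1_Q}(t)}$ (with the additive characters of $\Z/q\Z$), separate the $t = 0$ term, which gives the main term $\frac{|S|^2 |Q|}{q}$, and bound the remaining terms by $\max_{t \neq 0} |\widehat{1_Q}(t)| \cdot \sum_{t \neq 0} |\widehat{1_S}(t)|^2 \leq \max_{t \neq 0} |\widehat{1_Q}(t)| \cdot |S| q$. One then needs a Gauss-sum-type bound on $\widehat{1_Q}(t)$: for squarefree $q$ this is essentially $\sqrt{q}$ up to constants, but for general $q$ the characteristic function of QRs has larger Fourier coefficients (roughly $q / \mathrm{rad}(q)^{1/2}$ type behaviour), and this is exactly where the constant $\frac{11}{32}$ must come from — the bound is tuned so that $\frac{|S|^2|Q|}{q}$ beats the error term precisely when $|S| > \frac{11}{32} q$. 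Since $|Q|/q$ is itself small for highly non-squarefree $q$ (it can be as small as a constant multiple of $1/\mathrm{rad}(q)$ times... — actually $|Q| \asymp \prod_p p^{a_p}(1 - 1/p)/2$ roughly), the interplay between the size of $Q$ and the size of the large Fourier coefficients of $1_Q$ has to be balanced carefully, and I expect a clean way to do this is to pass to the squarefree kernel: reduce mod $\mathrm{rad}(q)$ or handle the $p$-parts where $a_p \geq 2$ by a lifting argument (a QR mod $p$ with the right valuation lifts to a QR mod $p^{a_p}$), thereby effectively reducing to the squarefree case where the Gauss sum bound $|\widehat{1_Q}(t)| \ll \sqrt{q}$ is available and the numerology $\frac{11}{32}$ emerges.

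The main obstacle I anticipate is getting the constant sharp (or at least down to $\frac{11}{32}$) rather than just proving *some* threshold strictly below $\frac12$: a crude Fourier bound will give a constant like $\frac12 + o(1)$ or worse, and squeezing it below requires either the precise Gauss sum evaluation together with a careful treatment of the prime-power parts, or an entirely combinatorial argument (e.g. an averaging over dilates $x \mapsto \lambda x$ with $\lambda$ a QR, exploiting that QRs are a multiplicative subgroup, combined with Cauchy–Schwarz). Given that the proposition is quoted from Lagarias–Odlyzko–Shearer \cite{los1}, I would in practice cite their argument rather than reconstruct the optimal constant; the sketch above indicates why a bound of this shape is true and why the constant is an absolute number below $\frac12$.
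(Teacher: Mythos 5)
The paper offers no proof of this proposition at all: it is quoted directly from Lagarias--Odlyzko--Shearer \cite{los1}, and the surrounding remark explicitly states that the authors ``do not know of any argument that could be described as in any way routine,'' even for a weaker constant such as any $c<\frac12$. Your bottom line --- that in practice you would cite \cite{los1} --- therefore coincides exactly with what the paper does, and to that extent your proposal is fine.

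However, the heuristic sketch you offer should not be mistaken for a viable route to the statement, and it is worth being precise about where it breaks. The Fourier computation $\sum_{x\in Q} r(x) = \frac1q\sum_t \widehat{1_S}(t)^2\overline{\widehat{1_Q}(t)}$ with a Gauss-sum bound works well when $q$ is prime (there it gives the much stronger threshold $|S|\gg\sqrt q$), but the entire difficulty of the theorem is concentrated in highly composite moduli, above all high powers of $2$: the extremal configuration achieving density $\frac{11}{32}$ is a union of residue classes modulo $32$. In that regime $1_Q$ has Fourier coefficients of size comparable to its main term, so the ``error'' does not beat the ``main term'' for any density below $\frac12$, and no amount of tuning recovers $\frac{11}{32}$. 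Your proposed repair --- reducing to the squarefree kernel by lifting --- also does not go through as stated: being a square modulo $p^{a}$ is not determined by being a square modulo $p$ once one allows elements of positive $p$-adic valuation (and for $p=2$ one needs conditions modulo $8$ even for units), which is precisely why the hard case cannot be pushed down to $\operatorname{rad}(q)$. The actual proof in \cite{los1} is a careful combinatorial analysis modulo small prime powers, not a Fourier argument. So: cite the reference, as the paper does, but do not present the sketch as evidence that the constant $\frac{11}{32}$ is within reach of these methods.
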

\emph{Remarks.} The $\frac{11}{32}$ in this theorem is sharp. For our purposes, $\frac{11}{32}$ could be replaced by any constant less than $\frac{1}{2}$. A simpler proof of such a statement could probably be extracted from \cite{los1} or the companion paper \cite{los2}, but we do not know of any argument that could be described as in any way routine. 

Instead of the result of Lagarias, Odlyzko and Shearer, it would suffice to have the following statement: there is some $\eta_k > 0$ such that if $(1 - \eta_k) q$ of the elements of $\Z/q\Z$ are $k$-coloured then there are $x, y$ of the same colour with $x + y$ a square. We believe that such a statement can be established relatively painlessly using a simplified version of the arguments of Khalfallah and Szemer\'edi \cite{sk}. The second author provides an account of this in an unpublished note \cite[Theorem 1.2]{lindqvist-unpub}.

\section{Capturing most of the squares in a Bohr set}
\label{sec4}
This section contains the technical heart of the paper. Our aim is to prove the following result. Here, and in what follows, $\mathfrak{S}(b,q)$ denotes the number of solutions to $x^2 \equiv b \md{q}$ with $x \in \Z/q\Z$.

\begin{proposition}\label{sec-3-main}
Let $\eta > 0$, and let $\Omega : \N^3 \rightarrow \N$ be a function \textup{(}which may depend on $\eta$\textup{)}, nondecreasing in each variable. Suppose that $N > N_0(\Omega, \eta)$ is sufficiently large, and let $A \subset [N,2N]$ be a set of size at least $N/2$. Then there are $q, d = O_{\eta,\Omega}(1)$, $\eps \gg_{\eta,\Omega} 1$, $b \in \Z/q\Z$, $x \in [2,4]$, and $\theta, z \in  \R^d$ such that 
\begin{enumerate}
\item $b$ is a quadratic residue modulo $q$;
\item $\theta$ is $(\Omega(q,d,1/\eps),N)$-irrational;
\item $A + A$ contains all but at most $\eta \mathfrak{S}(b,q)(2\eps)^{d+1}q^{-1} N^{1/2}$ of the squares in the set $\{n \in \N : n \equiv b \md{q}, |\frac{n}{N} - x| , \Vert \theta n - z\Vert_{\T^d} \leq \eps\}$.
\end{enumerate}
\end{proposition}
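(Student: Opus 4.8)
The plan is to apply the arithmetic regularity lemma to the indicator function $1_A$ on the interval $[N,2N]$. This decomposes $1_A = f_{\mathrm{str}} + f_{\mathrm{sml}} + f_{\mathrm{unf}}$, where $f_{\mathrm{str}}$ is a "structured" nilsequence-type (here, since we ultimately only need the degree-one part for controlling squares via Weyl, we expect a Bohr-set/bracket-polynomial structured part) component of bounded complexity, $f_{\mathrm{sml}}$ is small in $L^2$, and $f_{\mathrm{unf}}$ is tiny in the appropriate uniformity norm ($U^3$ or so, enough to control the count of solutions to $x + y = z^2$, which is a "square of a Gowers norm" type average). The growth function in the regularity lemma is exactly where $\Omega$ enters: we feed in a sufficiently fast-growing $\Omega$ so that the structured part $f_{\mathrm{str}}$ is of the form $F(\theta n)$ (or $F(\theta n, n/N)$) with $\theta$ being $(\Omega(q,d,1/\eps),N)$-irrational after passing to a suitable factor — this irrationality/equidistribution refinement is standard in the Green--Tao arithmetic regularity setup. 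One then restricts to a Bohr-set-type cell $\{n : |n/N - x| \leq \eps, \Vert \theta n - z\Vert_{\T^d} \leq \eps\}$ on which $f_{\mathrm{str}}$ is essentially constant, equal to some density $\alpha_{x,z}$.

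Next I would run a counting argument. Consider the weighted count of solutions to $a + a' = s$ with $a, a' \in A$ and $s$ a square, localized appropriately. Expanding $1_A = f_{\mathrm{str}} + f_{\mathrm{sml}} + f_{\mathrm{unf}}$ in both $a$ and $a'$, the contribution of $f_{\mathrm{unf}}$ in either slot is negligible: this is where Proposition \ref{ell-6} enters, since the "third function" here is essentially the indicator of the squares, and the $\ell^6$ (or rather $L^6$ on the circle) bound $\int |\widehat{1}_S|^6 \ll N^2$ lets us absorb the square indicator and reduce to an honest Gowers-norm control of $f_{\mathrm{unf}}$ against an $L^2$-bounded object, making that term $o$ of the main term. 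The $f_{\mathrm{sml}}$ terms are controlled by Cauchy--Schwarz and their $L^2$ smallness. So the main term is the count of solutions to $a + a' = (\text{square})$ weighted by $f_{\mathrm{str}}(a) f_{\mathrm{str}}(a')$, which, because $f_{\mathrm{str}}$ is a function of $\theta n$ and $n/N$ only, unwinds via Fourier analysis / equidistribution into: $\sum_{\text{cells}} \alpha_{x,z}\alpha_{x',z'} \cdot (\text{number of squares in the relevant localized Bohr set})$, the key point being that Corollary \ref{cor-weyl-1} guarantees the squares are equidistributed in the Bohr cells precisely because $\theta$ is sufficiently irrational (the squares $n^2$ with $n$ in a short interval of length $N^{1/2}$ are equidistributed with respect to $\theta n^2 \bmod 1$). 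This identifies the "expected" number of squares of residue $b$ in such a cell as $\approx \mathfrak{S}(b,q)(2\eps)^{d+1}q^{-1}N^{1/2}$.

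Now I would argue by contradiction, or rather extract the conclusion directly. Suppose that in \emph{every} admissible cell (every choice of $x$, $z$, and residue class $b$ with $b$ a QR mod $q$) the set $A+A$ misses more than $\eta \mathfrak{S}(b,q)(2\eps)^{d+1}q^{-1}N^{1/2}$ of the squares there. One shows this forces the structured densities $\alpha_{x,z}$ to be small (bounded away from the average density $\geq 1/2$) on a substantial fraction of cells — essentially because if $\alpha_{x,z} + \alpha_{x',z'}$ summed over the relevant pairing exceeded $1$ on a positive-density set of cells consistent with $b$ being a QR, the counting argument above would force a square into $A+A$, contradicting the miss. But $\E f_{\mathrm{str}} = \E 1_A \geq 1/2$, so the $\alpha_{x,z}$ average at least $1/2$; combined with a pigeonhole/averaging over $b$ ranging over the quadratic residues mod $q$ (here $\mathfrak{S}(b,q) > 0$ for the $b$ in question, and quadratic residues occupy a fixed positive proportion), we derive that on most cells $\alpha$ must be very close to $1/2$ and moreover the "complementary" density $1-\alpha$ must \emph{also} be large along shifts making sums squares — and this is exactly the Lagarias--Odlyzko--Shearer input applied at the level of residues mod $q$: the set of residues where $\alpha_{x,z} \geq 1/2 - o(1)$ has density $> 11/32$ (in fact close to $1/2$), hence its sumset contains a quadratic residue, pinning down a cell and a residue $b$ for which $\alpha_{x,z} + \alpha_{x',z'} > 1$, producing the missing square — contradiction. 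This yields the existence of the desired $q, d, \eps, b, x, \theta, z$.

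The main obstacle I expect is the bookkeeping in the counting step: correctly quantifying the error terms so that the "$\eta \mathfrak{S}(b,q)(2\eps)^{d+1}q^{-1}N^{1/2}$" threshold comes out with the right dependencies — in particular making sure the $f_{\mathrm{sml}}$ and $f_{\mathrm{unf}}$ contributions are genuinely smaller than $\eta$ times the main term \emph{uniformly across all the cells simultaneously}, which is delicate because individual cells are thin (of relative size $\eps^{d+1}$) and the uniformity-norm control must be strong enough (this is why $\Omega$ is allowed to be an arbitrary growth function and why the regularity lemma's tower-type bounds are unavoidable). A secondary difficulty is arranging the irrationality of $\theta$ at the correct scale: one must pass from the raw structured part to a refined one where the frequency $\theta$ is $(\Omega(q,d,1/\eps),N)$-irrational, which requires the standard but somewhat technical "irrationality" refinement of the regularity lemma, and one must track that $q$ (the modulus appearing in the Bohr description, coming from the rational part of the nilsequence) stays bounded in terms of $\eta$ and $\Omega$ only.
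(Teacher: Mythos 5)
Your overall architecture — arithmetic regularity, Weyl-type equidistribution of squares in Bohr cells, the $L^6$ restriction estimate of Proposition \ref{ell-6} to handle the uniform part, and Lagarias--Odlyzko--Shearer to locate a quadratic residue — matches the paper. But the core deduction is not correctly set up, in two linked places. First, your contradiction scheme ("the counting argument would force a square into $A+A$, contradicting the miss") cannot deliver the conclusion as stated: the hypothesis to be contradicted is that \emph{more than} an $\eta$-fraction of the squares in every cell is missed, and producing one square (or even a positive proportion of squares, which is all a global solution count divided by a maximal representation count can give, since the structured densities are only bounded below by a small constant) does not contradict that. The missing idea is to work directly with the exceptional set $S$ of squares in the chosen cell $Y$ that are \emph{not} in $A+A$: one has $\sum_{n\in S}(1_A 1_X \ast 1_A 1_{X'})(n)=0$ identically, the structured contribution to this sum is $\gg |S\cap Y_-|\cdot|X'| \gg |S \cap Y_-| (2\eps')^{d+1}q^{-1}N$ (because every $n$ in a slightly shrunken cell $Y_-$ has essentially all of $X'$ available as representations $n=m+m'$ with $m\in X$), and the error contributions are bounded using the \emph{a priori} Weyl bound on $|S|$ (Lemma \ref{lem-square}) and a pointwise bound on how many $n'\in X'$ pair with a fixed $n\in X$ to give a square (Lemma \ref{lem-square-2}). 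This forces $|S\cap Y_-|$, hence $|S|$, to be small; nothing in your outline bounds the \emph{number} of missed squares.

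Second, your use of Proposition \ref{sumset-residue} is miscalibrated. From $\E 1_A\geq \frac12$ one cannot conclude that the cell densities $\alpha_{x,z}$ are "close to $\frac12$ on most cells", nor that the set of residues with $\alpha\geq \frac12-o(1)$ has density $>\frac{11}{32}$ (take $A$ a union of full cells of total measure $\frac12$). By Markov, only a \emph{small} threshold works: the paper takes $U$ to be the residues $u$ carrying mass at least $\frac{1}{20}$ (together with a condition controlling $f_{\sml}$ on that residue class), deduces $|U|>\frac{11}{32}q$, and applies LOS to find $u,u'\in U$ with $u+u'$ a quadratic residue. Consequently one only ever has $\alpha_{x,z},\alpha_{x',z'}\geq \frac{1}{40}$, so the pigeonhole condition $\alpha+\alpha'>1$ you invoke is unavailable — and it is also unnecessary once the argument is run through the vanishing convolution sum over $S$, which needs only positivity of the structured densities. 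A further point you should not gloss over: the two cells must be taken at \emph{different} scales $\eps$ and $\eps'\ll_{\ M,\delta}\eps$, so that $Y_- - X'\subset X$ and so that the error terms (each carrying a factor $\delta$) are genuinely dominated by the main term $|S\cap Y_-|(2\eps')^{d+1}q^{-1}N$.
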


\emph{Remarks.} The assumption that $|A| \geq N/2$ could be weakened to $|A| \geq cN$ for any $c > 11/32$, using essentially the same proof. We do not record this explicitly as Proposition \ref{sec-3-main} seems unlikely to be of independent interest. In our applications, $\eta$ will be an absolute constant which could be specified explicitly if desired ($\eta = 10^{-10}$ should certainly be admissible). 

The key tool in the proof of Proposition \ref{sec-3-main} will be the \emph{arithmetic regularity lemma}, introduced in \cite{green-arith-regularity}. The formulation we use here, in a more general guise, is the main result of \cite{green-tao-arithregularity}. That paper is long and quite difficult, but only Sections 1 and 2 of it are relevant to us. Furthermore, that paper establishes a regularity lemma for the Gowers $U^{s+1}$-norm for general $s$, whereas we only need the case $s = 1$. This means that the notion of a \emph{nilsequence}, beyond the abelian case, is not relevant here. A complete, self-contained proof of the arithmetic regularity lemma in the form we need it here can be written up in less than 10 pages. Conveniently, such a writeup has been provided by Sean Eberhard \cite{sean-writeup}.

Here is the arithmetic regularity lemma in the form we will need it.

\begin{proposition}\label{ar} Suppose we are given $\delta > 0$ and an increasing function $\mathcal{F} : \N \rightarrow \R_+$. Then there exists $M_{\textup{max}} \ll_{\delta,\mathcal{F}} 1$ such that for any function $f : [N,\dots, 2N) \to [0,1]$ there is an $M\leq M_{\max}$  and a decomposition $f = f_{\tor} + f_{\sml}+ f_{\unf}$ into functions taking values in $[-1,1]$,  where $\sum_{N \leq n < 2N} |f_{\sml}(n)| \leq \delta N$, $\| \widehat{f}_{\unf} \|_{\infty} \leq N/\mathcal{F}(M)$ and $f_{\tor}(n) = F(n \mdlem{q}, n/N, \theta n)$ for some $q,d \leq M$ and some function $F : \Z/q\Z \times [1,2] \times \T^d \rightarrow [0,1]$ with Lipschitz constant at most $M$. Furthermore $\theta$ may be taken to be $(\mathcal{F}(M), N)$-irrational.
\end{proposition}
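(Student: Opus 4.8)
\textbf{Proof plan for Proposition~\ref{ar}.} The statement is a packaging of the $U^2$-case of the arithmetic regularity lemma, so the plan is to reduce it to the Koopman--von Neumann / energy-increment argument in the abelian setting and then upgrade the ``approximate structure'' part to the honest nilsequence (here, just a linear phase) form with an irrationality guarantee. I will work throughout on the interval $[N,2N)$, which I embed into a cyclic group $\Z/N'\Z$ with $N' $ a prime in $[CN, 2CN]$ for suitable absolute $C$, so that Fourier analysis on a group is available; at the end one transfers back, the loss being absorbed into $f_{\sml}$ and a harmless change of constants.

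First I would run the standard energy-increment argument. Starting from the trivial partition, one repeatedly asks whether the current ``structured part'' $f_{\mathcal{B}}$ (a conditional expectation onto a $\sigma$-algebra $\mathcal{B}$ generated by a bounded number of Bohr sets) satisfies $\|\widehat{f - f_{\mathcal{B}}}\|_\infty \le N/\mathcal{F}(M)$, where $M$ measures the current complexity (number and radii of the Bohr sets). If not, the offending frequency $\xi$ produces a new Bohr set whose addition increases the $L^2$ energy $\|f_{\mathcal{B}}\|_2^2$ by a quantity bounded below in terms of $1/\mathcal{F}(M)$; since the energy is bounded by $1$, this process terminates after $O_{\delta,\mathcal{F}}(1)$ steps, giving the uniform bound $M \le M_{\max} \ll_{\delta,\mathcal F}1$. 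This yields a first decomposition $f = f_{\mathcal B} + f_{\unf}$ with $\|\widehat f_{\unf}\|_\infty \le N/\mathcal F(M)$, where $f_{\mathcal B}$ is (up to the $f_{\sml}$ error from approximating indicator functions of Bohr sets by Lipschitz functions, and from the transfer from $\Z/N'\Z$ back to $[N,2N)$) of the form $F_0(n, \theta_0 n)$ with $\theta_0$ the vector of the finitely many frequencies and $F_0$ Lipschitz; the residue-class variable $n \md q$ is introduced in the same way by also conditioning on a progression of a bounded modulus $q$, which one always includes at the outset.

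The remaining task — and the step I expect to be the main obstacle — is to arrange that the frequency vector $\theta$ is $(\mathcal{F}(M),N)$-irrational, i.e.\ to eliminate ``rational'' or ``near-rational'' relations among the $\theta_i$. This is the nilsequence-factorisation step of \cite{green-tao-arithregularity} specialised to $s=1$. The idea is: any additive character relation $\mathbf r\cdot\theta_0 \approx a/q'$ with small $\|\mathbf r\|_1$ and bounded $q'$ can be absorbed — the rational part contributes to the periodic coordinate $n \md q$ (after enlarging $q$ by a bounded factor), and the genuinely irrational remainder is kept in $\theta$; one iterates this finitely many times. Because each absorption only changes $M$ and $q$ by amounts controlled by the current parameters, and because $\mathcal{F}$ is a fixed (growing) function that we may compose with itself a bounded number of times, the whole scheme still halts with all quantities $O_{\delta,\mathcal{F}}(1)$, and at the end $\theta$ is $(\mathcal{F}(M),N)$-irrational for the \emph{final} complexity $M$ — here one uses that we are free to feed the energy-increment argument a faster-growing function than the target $\mathcal F$, so that ``$\mathcal F(M)$-irrational'' survives the bookkeeping. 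Finally I collect all error terms: the Bohr-set-to-Lipschitz smoothing, the $\Z/N'\Z$-to-$[N,2N)$ transfer, and any truncation of $\theta n$ to $\T^d$ all produce functions supported on a set of size $\le \delta N$ (after choosing the internal parameters appropriately relative to $\delta$), and these are collated into $f_{\sml}$; what remains is exactly $f_{\tor}(n) = F(n \md q, n/N, \theta n)$ with $F$ Lipschitz of constant $\le M$ (rescaling $F$ if necessary), $q,d \le M$, and $\|\widehat f_{\unf}\|_\infty \le N/\mathcal F(M)$, as required. For a fully self-contained account of exactly this ($s=1$) case one may consult \cite{sean-writeup}.
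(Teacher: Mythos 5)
Your plan is a legitimate sketch of how Proposition~\ref{ar} is actually proved, but it is worth saying plainly that the paper does not prove this proposition at all: it is quoted as the $s=1$ case of the main result of \cite{green-tao-arithregularity} (with a self-contained account in \cite{sean-writeup}), and the only work done in the text is the pair of remarks that for $1$-bounded functions the $U^2$-norm control of $f_{\unf}$ is equivalent to control of $\Vert \widehat{f}_{\unf}\Vert_\infty$, and that $\ell^1$- and $\ell^2$-control of $f_{\sml}$ are equivalent. So you are reproving the cited black box rather than following the paper, and your outline is essentially the proof those references give: energy increment over Bohr-set/character data to reach complexity $M\ll_{\delta,\mathcal F}1$, then a factorisation step (smooth $\times$ rational $\times$ irrational) that pushes rational and near-rational relations among the frequencies into the $n \md{q}$ and $n/N$ coordinates, run with a pre-composed faster-growing function so that the $(\mathcal F(M),N)$-irrationality is measured against the \emph{final} $M$. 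Two places in your write-up are looser than the real argument and deserve a warning. First, termination of the increment loop is not just ``energy $\le 1$ and each step gains $\gg 1/\mathcal F(M)^2$'': since $M$ grows along the iteration the gain shrinks, and the bound on the number of steps comes from the fact that the complexity recursion is deterministic in $(\delta,\mathcal F)$, so the partial sums of the guaranteed gains exceed $1$ after $O_{\delta,\mathcal F}(1)$ steps. Second, in the standard proofs the parameter $\delta$ does not enter only through smoothing/transfer losses: $f_{\sml}$ is produced by a pigeonhole between two successive structured approximations (a two-level iteration) so that its $\ell^2$ (hence $\ell^1$) norm is $\le \delta N$; your single-loop description conflates this with the Lipschitz-smoothing errors, which must in any case be chosen adaptively since the number of Bohr sets is not known in advance. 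Both points are repairable and are exactly what \cite{sean-writeup} carries out, so your proposal buys a self-contained route at the cost of redoing roughly ten pages that the paper deliberately outsources.
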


We remark that in the works previously cited the function $f_{\unf}$ was controlled in terms of the Gowers $U^2$-norm, rather than in terms of the supremum norm of the Fourier transform, defined by
\[ \widehat{f}_{\unf}(t) := \sum_{N \leq n < 2N} f_{\unf}(n) e(-tn),\] where $e(x) = e^{2\pi i x}$. However it is well-known (and easy to prove) that for bounded functions these norms are essentially equivalent.

Moreover $f_{\sml}$ is traditionally controlled in the $\ell^2$-norm, rather than the $\ell^1$-norm as we have here. However, since $f_{\sml}$ is bounded by $1$, these two norms are equivalent too. Thus Proposition \ref{ar} is equivalent to the arithmetic regularity lemma as usually stated. 

Let us now begin the proof of Proposition \ref{sec-3-main} in earnest. Apply Proposition \ref{ar} with $f = 1_A$, $\delta < \eta$ some small constant ($\delta = 10^{-100}$ would be permissible), and the function $\mathcal{F}$ to be specified later (it will depend on $\Omega$ and $\eta$). This gives integers $q, d \leq M$, $\theta \in \R^d$ and $F : \Z/q\Z \times [1,2] \times \T^d \rightarrow [0,1]$ and a decomposition
\begin{equation}\label{ar-decomp} 1_A = f_{\tor} + f_{\sml} + f_{\unf}\end{equation} with the properties described in the statement of Proposition \ref{ar} just given.

\begin{lemma}\label{counting}
Suppose that $\delta$ is sufficiently small and that $\mathcal{F}$ grows sufficiently rapidly. Then $\int F d\mu > \frac{9}{20}$, where $\mu$ denotes the natural\footnote{The product of the uniform probability measure on $\Z/q\Z$, Lebesgue measure on $\R$ and normalised Lebesgue measure on $\T^d$.} measure on $\Z/q\Z \times \R \times \T^d$.
\end{lemma}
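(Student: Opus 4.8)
The plan is to show that $\int F\, d\mu$ is close to the density $|A|/N \geq 1/2$, with the error controlled by $\delta$ and the slow-growth of $\mathcal{F}$. The natural starting point is the decomposition \eqref{ar-decomp}: averaging over $n \in [N,2N)$ gives
\[
\frac{|A|}{N} = \E_{N \leq n < 2N} f_{\tor}(n) + \E_{N \leq n < 2N} f_{\sml}(n) + \E_{N \leq n < 2N} f_{\unf}(n).
\]
The small term contributes at most $\delta$ in absolute value by hypothesis. The uniform term is $\frac{1}{N}\widehat{f}_{\unf}(0)$, which has modulus at most $1/\mathcal{F}(M) \leq 1/\mathcal{F}(1)$ (using that $\mathcal{F}$ is increasing and $M \geq 1$), so it is negligible once $\mathcal{F}$ grows at all. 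Hence $\E_{N \leq n < 2N} f_{\tor}(n) \geq \frac12 - \delta - 1/\mathcal{F}(1)$.

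First I would therefore reduce to comparing $\E_{N \leq n < 2N} f_{\tor}(n) = \E_{N \leq n < 2N} F(n \bmod q, n/N, \theta n)$ with $\int F\, d\mu$. This is an equidistribution statement: as $n$ ranges over $[N,2N)$, the point $(n \bmod q, n/N, \theta n) \in \Z/q\Z \times [1,2] \times \T^d$ should be close to equidistributed with respect to $\mu$, because $\theta$ is $(\mathcal{F}(M),N)$-irrational and $F$ is Lipschitz with constant at most $M$. The standard way to make this quantitative is to expand $F$ in Fourier/characters: write $F(a,t,y) = \sum_{j \in \Z/q\Z, \mathbf{r} \in \Z^d} \int_1^2 \widehat{F}_{j,\mathbf{r}}(t')\cdots$ — more simply, use that $F$ being $M$-Lipschitz lets one approximate $F$ by a trigonometric polynomial with frequencies $\mathbf{r} \in \Z^d$ of bounded $\ell^1$-norm (say $\|\mathbf{r}\|_1 \leq K$ for $K$ a function of $M$ and the desired accuracy) plus smooth dependence on $t$, at the cost of an $L^\infty$-error that is small. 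For each nonzero frequency, the sum $\E_{N \leq n < 2N} e(\mathbf{r}\cdot \theta n) \chi(n) \phi(n/N)$ (with $\chi$ a character mod $q$, $\phi$ smooth) is bounded using $(\mathcal{F}(M),N)$-irrationality — essentially summation by parts reduces it to the exponential sum estimate $|\sum_{n \in I} e(\mathbf{r}\cdot\theta n)| \ll \|\mathbf{r}\cdot\theta\|_{\T}^{-1} \leq N/\mathcal{F}(M)$ on suitable subintervals $I$. Summing over the $O_K(1)$ frequencies gives total error $\ll_{M} N/\mathcal{F}(M) \cdot (\text{stuff})$, which is $o(1)$ provided $\mathcal{F}$ is chosen to grow fast enough relative to these $M$-dependent quantities (this is exactly the point of allowing $\mathcal{F}$ to be an arbitrary increasing function in Proposition \ref{ar}). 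The zero-frequency term reproduces $\int F\, d\mu$.

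Assembling the pieces: $\int F\, d\mu \geq \E_{N \leq n < 2N} F(n \bmod q, n/N, \theta n) - o_{\mathcal{F}}(1) \geq \frac12 - \delta - 1/\mathcal{F}(1) - o_{\mathcal{F}}(1) > \frac{9}{20}$ once $\delta$ is small (e.g. $\delta = 10^{-100}$) and $\mathcal{F}$ grows sufficiently rapidly, which is exactly the hypothesis of the lemma.

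I expect the main obstacle to be the equidistribution step — specifically, quantifying how Lipschitz-ness of $F$ lets one truncate the Fourier expansion (handling the non-smooth $\Z/q\Z$ coordinate is fine since $q \leq M$ is bounded, but the $\T^d$ coordinate needs a genuine Fejér-type smoothing or Weierstrass approximation with explicit control of the number and size of frequencies in terms of $M$), and then bookkeeping that the resulting error, a finite sum of exponential-sum bounds each of size $\ll N/\mathcal{F}(M)$ with multiplicities depending only on $M$, is beaten by the freedom to take $\mathcal{F}$ large. None of this is deep, but the chain of dependencies ($\delta$ then $\mathcal{F}$, with $M$ emerging from the regularity lemma in between) needs to be set up in the right logical order so that no parameter is chosen in terms of something that depends on it.
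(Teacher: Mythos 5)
Your proposal is correct and follows essentially the same route as the paper: split via the regularity decomposition, bound the small and uniform contributions, and compare $\E_{N \leq n < 2N} F(n \bmod q, n/N, \theta n)$ with $\int F\, d\mu$ by an equidistribution argument using the $(\mathcal{F}(M),N)$-irrationality of $\theta$ (which the paper outsources to a lemma of Eberhard--Green--Manners). The only cosmetic difference is that you evaluate $\widehat{f}_{\unf}(0)$ directly to bound the uniform term, where the paper routes through a smooth majorant of $[N,2N)$; both are valid given the paper's definition of $\widehat{f}_{\unf}$.
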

\emph{Remark.} Here, $\frac{9}{20}$ is simply a convenient fraction less than $\frac{1}{2}$. In fact, $\int F d\mu$ can be made as close to $\frac{1}{2}$ as one wishes by reducing $\delta$ and increasing $\mathcal{F}(M)$.

\begin{proof}
We begin by noting that, by assumption,
\begin{equation}\label{eq-11} \E_{N \leq n < 2N} 1_A(n) \geq \frac{1}{2}.\end{equation}
If $\delta < \frac{1}{100}$ then 
\begin{equation}\label{eq-12} |\E_{N \leq n < 2N} f_{\sml}(n)| < \frac{1}{100}.\end{equation}
Also, introducing a smooth majorant $\psi$ for $[N, 2N)$ with $\psi(n) = 1$ for $N \leq n < 2N$ we have
\begin{align*} |\E_{N \leq n < 2N} f_{\unf}(n)|  & = |\frac{1}{N} \sum_n \psi(n) f_{\unf}(n) | \\ &  = |\frac{1}{N} \int^1_0 \widehat{\psi}(t) \widehat{f_{\unf}}(t) dt| \\ & \leq \frac{\Vert \widehat{\psi} \Vert_1}{\mathcal{F}(M)}.\end{align*}
With an appropriate choice of $\psi$ (see Lemma \ref{app-1} for details) we have $\Vert \widehat{\psi} \Vert_1 = O(1)$, and so if $\mathcal{F}(M)$ is sufficiently large it follows that 
\begin{equation}\label{eq-13} |\E_{N \leq n < 2N} f_{\unf}(n)|< \frac{1}{100}.\end{equation}
We also have
\[ \E_{N \leq n < 2N} f_{\tor}(n) = \E_{N \leq n < 2N} F(n \md{q}, \frac{n}{N}, \theta n).\] 
However, it was proven\footnote{This is not an especially difficult argument: roughly, one approximates $F$ by a function with finite Fourier support, then uses the irrationality of $\theta$ in estimating the resulting exponential sums.} in \cite[Lemma A.4]{eberhard-green-manners}
that, if $\mathcal{F}$ grows sufficiently rapidly and if $N$ is big enough, 
\begin{equation}\label{eq-14} |\E_{N \leq n < 2N} F(n \md{q}, \frac{n}{N}, \theta n) - \int F d\mu| < \frac{1}{100}.\end{equation}
Combining \eqref{eq-11}, \eqref{eq-12}, \eqref{eq-13}, \eqref{eq-14} concludes the proof.
\end{proof}

Now let $U \subset \Z/q\Z$ be the set of all $u \in \Z/q\Z$ for which 
\begin{equation}\label{cond-1} \int^2_1 \int_{\T^d} F(u, x, z) dz dx \geq \frac{1}{20}\end{equation} and for which
\begin{equation}\label{cond-2} \sum_{\substack{N \leq n < 2N \\ n \equiv u \mdsub{q}}} |f_{\sml}(n)| \leq \frac{20\delta}{q}N.\end{equation}
One should think, informally, of these being the residue classes $\md{q}$ on which $A$ has ``significant mass''.

\begin{lemma}\label{lem1-a}
Suppose that $\delta$ is sufficiently small and that $\mathcal{F}$ grows sufficiently rapidly. There are elements $u, u' \in U$ such that $u + u'$ is a quadratic residue modulo $q$.
\end{lemma}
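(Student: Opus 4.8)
The plan is to deduce Lemma~\ref{lem1-a} from the Lagarias--Odlyzko--Shearer result (Proposition~\ref{sumset-residue}) by showing that $U$ occupies more than $\frac{11}{32}$ (indeed more than $\frac12$) of the residue classes modulo $q$. First I would bound the measure of the ``bad'' set $\Z/q\Z \setminus U$. A class $u$ fails to lie in $U$ either because it violates \eqref{cond-1} or because it violates \eqref{cond-2}. For the second type, Markov's inequality applied to the global bound $\sum_{N \leq n < 2N}|f_{\sml}(n)| \leq \delta N$ shows that the number of $u$ with $\sum_{n \equiv u}|f_{\sml}(n)| > \frac{20\delta}{q}N$ is at most $q/20$. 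So at most a $\frac{1}{20}$-proportion of classes are excluded on account of \eqref{cond-2}.

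Next I would handle \eqref{cond-1}. Write $G(u) := \int_1^2\int_{\T^d} F(u,x,z)\, dz\, dx$, so $0 \le G(u) \le 1$ and $\E_{u \in \Z/q\Z} G(u) = \int F\, d\mu > \frac{9}{20}$ by Lemma~\ref{counting} (choosing $\delta$ small and $\mathcal{F}$ rapidly growing). Let $B_1 = \{u : G(u) < \frac{1}{20}\}$ be the classes violating \eqref{cond-1}. Then
\[
\frac{9}{20} < \E_u G(u) = \frac{1}{q}\sum_{u \in B_1} G(u) + \frac{1}{q}\sum_{u \notin B_1} G(u) \le \frac{1}{20}\cdot\frac{|B_1|}{q} + \Big(1 - \frac{|B_1|}{q}\Big),
\]
which rearranges to $|B_1|/q \le \frac{1}{19}\cdot\frac{11}{20} < \frac{1}{20}$ — in any case comfortably less than, say, $\frac{1}{10}$. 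Combining the two estimates, $|\Z/q\Z \setminus U| \le (\frac{1}{10} + \frac{1}{20})q < \frac{1}{6}q$, so $|U| > \frac{5}{6}q > \frac{11}{32}q$.

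Now Proposition~\ref{sumset-residue} applies directly with $S = U$: since $|U| > \frac{11}{32}q$, the sumset $U+U$ contains a quadratic residue modulo $q$, i.e.\ there are $u, u' \in U$ with $u + u'$ a quadratic residue, which is exactly the assertion. I expect the only genuinely substantive point to be bookkeeping: making sure the constants $\frac{9}{20}$, $\frac{1}{20}$ and $\frac{20\delta}{q}$ from Lemma~\ref{counting} and the definitions \eqref{cond-1}, \eqref{cond-2} really do combine to leave $|U|$ above the LOS threshold, and confirming that the ``$\delta$ small, $\mathcal{F}$ rapidly growing'' hypotheses needed for Lemma~\ref{counting} are harmless to inherit here. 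There is no analytic difficulty — everything is a two-line averaging argument once Lemma~\ref{counting} is in hand — so the main (minor) obstacle is simply arranging the numerology so that the $\frac{11}{32}$ barrier is cleared with room to spare.
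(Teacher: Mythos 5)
Your argument is structurally identical to the paper's: Markov's inequality disposes of the classes violating \eqref{cond-2}, averaging $G(u)$ against Lemma \ref{counting} disposes of those violating \eqref{cond-1}, and Proposition \ref{sumset-residue} finishes. However, the bookkeeping you flagged as the only substantive point contains an arithmetic slip. Your displayed inequality rearranges to $\frac{9}{20} < 1 - \frac{19}{20}\cdot\frac{|B_1|}{q}$, hence $\frac{|B_1|}{q} < \frac{20}{19}\cdot\frac{11}{20} = \frac{11}{19} \approx 0.58$, not $\frac{1}{19}\cdot\frac{11}{20} < \frac{1}{20}$. Consequently the claims $|(\Z/q\Z)\setminus U| < \frac{1}{6}q$ and $|U| > \frac{5}{6}q$ are false, as is your parenthetical assertion that $|U| > \frac{1}{2}q$: since $\int F\,d\mu$ is only guaranteed to exceed $\frac{9}{20}$, the mass of $G$ could be concentrated on fewer than half the residue classes, and no argument of this shape can push $|U|$ past $\frac{1}{2}q$. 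This is precisely why the paper needs the genuine Lagarias--Odlyzko--Shearer constant (or at least some constant below $\frac{1}{2}$) rather than a trivial pigeonhole at density $\frac{1}{2}$. Fortunately the corrected numbers still clear the bar: $|U| \geq q\bigl(1 - \frac{11}{19} - \frac{1}{20}\bigr) = \frac{141}{380}q \approx 0.371q > \frac{11}{32}q$, so Proposition \ref{sumset-residue} applies and the lemma follows. For comparison, the paper bounds $|(\Z/q\Z)\setminus U_1| > \frac{8}{20}q$ directly and gets $|U| \geq \frac{7}{20}q$ — essentially the same margin.
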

\begin{proof}
Let $U_1 \subset \Z/q\Z$ be the set of all $u$ for which \eqref{cond-1} fails, and $U_2$ the set of all $u$ for which \eqref{cond-2} fails. Since $\sum_{N \leq n < 2N} |f_{\sml}(n)| \leq \delta N$, we have
\[ |U_2| \leq  \frac{q}{20} .\]

Furthermore by Lemma \ref{counting} we have
\begin{align*}
\frac{9}{20} < \int F d\mu  & = \frac{1}{q}\sum_{u \in \Z/q\Z} \int^2_1 \int_{\T^d} F(u, x, z) dz dx \\ & \leq \frac{1}{20} + \frac{1}{q}|(\Z/q\Z) \setminus U_1|. 
\end{align*}
It follows that 
\[ |U| \geq |(\Z/q\Z) \setminus U_1| - |U_2| \geq (\frac{9}{20} - \frac{1}{20} - \frac{1}{20}) q > \frac{11q}{32}.\]
The result now follows from Proposition \ref{sumset-residue}.
\end{proof}

Henceforth, we will fix two residue classes $u, u' \in U$ for which $u + u'$ is a quadratic residue modulo $q$. Define parameters $\eps > \eps' > 0$ by
\begin{equation}\label{eps-def} \eps := \frac{\delta}{M} \end{equation} and
\begin{equation}\label{eps-def-prime} \eps' := \frac{\delta}{dq}(2\eps)^{d+1}.
\end{equation}
Note that since $q, d \leq M$ we have
\begin{equation}\label{eps-prime-below} \eps' \geq \frac{\delta}{M^2}(\frac{2\delta}{M})^{M + 1} \gg_{\delta, M} 1.\end{equation} (The precise form of this bound is unimportant; what matters is that there is a lower bound depending only on $\delta$ and $M$.)

For $x, x' \in [1,2]$ and $z,z' \in \T^d$, define
\[ E_{x, z} := \sum_{\substack{N \leq n < 2N \\ n \equiv u \mdsub{q} \\ |\frac{n}{N} - x| \leq \eps  \\  \Vert \theta n - z \Vert_{\T^d} \leq \eps }} |f_{\sml}(n)| \qquad \mbox{and} \qquad  E'_{x', z'} := \sum_{\substack{N \leq n < 2N \\ n \equiv u' \mdsub{q} \\ |\frac{n}{N} - x'| \leq \eps' \\   \Vert \theta n - z'  \Vert_{\T^d} \leq \eps' }} |f_{\sml}(n)|.\] 
We have
\begin{align*} \int^2_1 \int_{\T^d} E_{x, z}dz dx & = \sum_{\substack{N \leq n < 2N \\ n \equiv u \mdsub{q}}} |f_{\sml}(n)| \int^2_11_{|\frac{n}{N} - x| \leq \eps } dx \int_{\T^d}1_{\Vert \theta n - z \Vert_{\T^d} \leq \eps } dz \\ & \leq (2\eps)^{d+1}\sum_{\substack{N \leq n < 2N \\ n \equiv u \mdsub{q}}} |f_{\sml}(n)| \leq (2\eps)^{d+1} \frac{20\delta}{q} N, \end{align*}
the last step being a consequence of \eqref{cond-2}. It follows from this and \eqref{cond-1} that
\[ \int^2_1 \int_{\T^d} \bigg( F(u,x,z) - \frac{q}{800 N \delta (2\eps)^{d+1}} E_{x,z}   \bigg)dz dx \geq \frac{1}{40},\] and so there are specific choices of $x, z$ such that 
\[ F(u,x,z) - \frac{q}{800 N \delta (2\eps)^{d+1}} E_{x,z}  \geq \frac{1}{40},\] which implies that 
\begin{equation}\label{xz-cond} F(u,x,z) \geq \frac{1}{40} \qquad \mbox{and} \qquad E_{x,z} \leq \frac{800 \delta N}{q} (2\eps)^{d+1}.\end{equation}
Similarly, there are $x', z'$ such that 
\begin{equation}\label{xz-cond-prime}  F(u',x',z') \geq \frac{1}{40} \qquad \mbox{and} \qquad E_{x',z'} \leq \frac{800 \delta N}{q} (2\eps')^{d+1}. \end{equation}

From now on, we fix these specific choices of $x, z, x', z'$ and set
\begin{equation}\label{x-def} X := \{ n \in \N : n \equiv u \md{q}, |\frac{n}{N} - x| , \|\theta n - z\|_{\T^d} \leq \eps \},\end{equation}
\begin{equation}\label{x-prime-def}  X' := \{ n \in \N : n \equiv u' \md{q}, |\frac{n}{N} - x'| , \|\theta n - z'\|_{\T^d} \leq \eps' \},\end{equation} and
\begin{equation} \label{y-def} Y :=  \{ n \in \N : n \equiv u + u' \md{q},  |\frac{n}{N} - (x + x')|  ,  \|\theta n - (z + z')\|_{\T^d} \leq \eps\}.
\end{equation}

Note that with this notation \eqref{xz-cond}, \eqref{xz-cond-prime} imply
\begin{equation}\label{unsmoothed} 
\sum_{n \in X} |f_{\sml}(n)| \ll \delta (2\eps)^{d+1} q^{-1} N,\qquad \sum_{n \in X'} |f_{\sml}(n)| \ll \delta (2\eps')^{d+1}q^{-1} N . \end{equation}

\begin{lemma}\label{lem-square}
Suppose that $\mathcal{F}$ grows sufficiently rapidly, and that $N$ is sufficiently large in terms of $\delta, M$. Then the number of squares in $Y$ is $\ll (2\eps)^{d+1} q^{-1}\mathfrak{S}(u + u', q) N^{1/2}$.
\end{lemma}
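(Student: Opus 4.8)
The plan is to count lattice points in the set $Y$ that happen to be perfect squares, by a change of variables $n = m^2$. The defining conditions on $n \in Y$ are: a congruence condition $n \equiv u+u' \md q$, an Archimedean condition $|n/N - (x+x')| \le \eps$, and a Bohr condition $\Vert \theta n - (z+z')\Vert_{\T^d} \le \eps$. Writing $n = m^2$, the Archimedean condition confines $m$ to an interval $I$ of length $\asymp \eps N^{1/2}/(x+x')^{1/2} \asymp \eps N^{1/2}$ around $\sqrt{(x+x')N}$; note $x + x' \in [2,4]$, so there is no degeneracy. The congruence $m^2 \equiv u+u' \md q$ has exactly $\mathfrak S(u+u', q)$ residue classes of solutions $m \md q$ (this is where that arithmetic function enters), so we partition $I$ into these residue classes and count in each separately. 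Thus it suffices to bound, for each fixed residue class $a \md q$ with $a^2 \equiv u + u' \md q$, the number of $m \in I$, $m \equiv a \md q$, with $\Vert \theta m^2 - (z+z')\Vert_{\T^d} \le \eps$.

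The key mechanism is the irrationality of $\theta$ together with Corollary \ref{cor-weyl-1}. First I would note that $|I| \le C\eps N^{1/2} \le N^{1/2}$ for $N$ large, so $I$ (or each residue subclass, which is an interval of length $|I|/q$ in the variable $m = a + qt$) is short enough to apply the corollary with $k = 2$: after substituting $m = a + qt$, the sum $\sum_t e(\mathbf r \cdot \theta (a+qt)^2 + \ldots)$ has leading coefficient $q^2 \mathbf r \cdot \theta$ in $t$; but it is cleaner to keep the variable $m$ and observe that $\theta$ being $(\mathcal F(M), N)$-irrational controls $\sum_{m \in I'} e(\mathbf r \cdot \theta m^2 + \ldots)$ over any sub-interval $I'$ of length $\le N^{1/2}$ by $N^{1/2}\Vert \mathbf r\Vert_1 \mathcal F(M)^{-1/C_2}$. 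To convert the Bohr-set counting problem into exponential sums, I would insert a smooth majorant for the box $\{w \in \T^d : \Vert w\Vert_{\T^d} \le \eps\}$ and expand it in its Fourier series on $\T^d$; the number of $m \in I$ in the right residue class with $\Vert \theta m^2 - (z+z')\Vert_{\T^d} \le \eps$ is then at most $\sum_{\mathbf r} \widehat{\Psi}(\mathbf r) e(-\mathbf r\cdot(z+z')) \sum_{m} e(\mathbf r \cdot \theta m^2)$ where $\Psi$ is the majorant and $m$ ranges over the interval-and-congruence constraint. The $\mathbf r = 0$ term contributes the main term $\asymp (2\eps)^d \cdot |I|/q \asymp (2\eps)^{d+1} q^{-1} N^{1/2}$ (using $\widehat\Psi(0) \asymp (2\eps)^d$ and $x + x' \asymp 1$); the nonzero terms are bounded via Corollary \ref{cor-weyl-1}. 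Provided $\mathcal F$ grows fast enough relative to $M$ (hence relative to $q, d, \eps$, which are all $O_M(1)$), and provided the Fourier coefficients $\widehat\Psi(\mathbf r)$ decay fast enough that $\sum_{\mathbf r \ne 0}|\widehat\Psi(\mathbf r)|\cdot\Vert\mathbf r\Vert_1$ converges (choose $\Psi$ smooth, so this holds, with a value depending only on $d$ and $\eps$), the total error is $\le (2\eps)^{d+1}q^{-1}N^{1/2}$, so the whole count is $\ll (2\eps)^{d+1} q^{-1} \mathfrak S(u+u', q) N^{1/2}$ as claimed.

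A couple of technical points I would attend to. The smooth majorant $\Psi$ on $\T^d$ for an $\eps$-box should be constructed (the paper's Appendix \ref{bump-appendix} presumably supplies exactly this), with $\widehat\Psi(0) = O((2\eps)^d)$, $\widehat\Psi \ge 0$ pointwise majorization in physical space, and rapid Fourier decay; one wrinkle is that the ``box'' in $\Vert\cdot\Vert_{\T^d}$ is a product of $1$-dimensional $\eps$-intervals, so $\Psi$ may be taken as a $d$-fold product of $1$-dimensional smooth majorants, and the constants depend only on $d$. For the congruence constraint on $m$, after fixing $a$ with $a^2 \equiv u+u' \md q$ and writing $m = a + q\ell$, we get an interval of length $\asymp \eps N^{1/2}/q$ in $\ell$ and the quadratic phase $\mathbf r\cdot\theta(a+q\ell)^2$ has leading coefficient $q^2\mathbf r\cdot\theta$ — but we do not apply the corollary to this; instead we keep $m$ and apply the corollary to sub-sums over the full interval $I$ in the variable $m$ without the congruence condition, then recover the congruence-restricted sum by standard orthogonality over $\Z/q\Z$ (a factor of $q$, absorbed since $q = O_M(1)$ and the extra characters only rescale $\theta$ by integer multiples, still controlled by $(\mathcal F(M), N)$-irrationality after adjusting $\Vert\mathbf r\Vert_1$). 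The main obstacle is bookkeeping: ensuring that every appearance of $q, d, \eps$ in the error terms is genuinely $O_M(1)$ and hence killed by a single choice of a rapidly-growing $\mathcal F$, and making sure the main term is a genuine \emph{upper} bound of the right shape (which is why we only need a majorant, not a minorant, for this lemma). Once that is arranged the estimate is immediate.
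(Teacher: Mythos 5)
Your proposal is correct and follows essentially the same route as the paper: substitute $n = m^2$, split into the $\mathfrak{S}(u+u',q)$ residue classes $a$ with $a^2 \equiv u+u' \pmod q$, majorise the Bohr condition by the smooth function $\psi^+_\eps$ of Lemma \ref{lem-a2}, Fourier-expand both the majorant and the congruence condition, and bound the nonzero frequencies by Corollary \ref{cor-weyl-1} using the $(\mathcal{F}(M),N)$-irrationality of $\theta$. The technical points you flag (the $d$-fold product majorant, absorbing the character phases as lower-order terms in the Weyl estimate, and the $O_M(1)$ bookkeeping) are exactly how the paper handles them.
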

\begin{proof}
Let $\mathscr{A}$ be the set of all $a \in \Z/q\Z$ for which $a^2 \equiv u + u' \md{q}$. Thus $|\mathscr{A}| = \mathfrak{S}(u + u', q)$. An upper bound for the number of squares in $Y$ is then 
\[ \sum_{a \in \mathscr{A}}\sum_{n \in I} 1_{n \equiv a \mdsub{q}} \psi^+_{\eps}(\theta n^2 - z - z'),\]
where $I = [(x + x' - \eps)^{1/2} N^{1/2},(x + x' + \eps)^{1/2} N^{1/2}]$ and $\psi^+_{\eps}$ is the majorant for the characteristic function of the ball $B_{\eps}(0)$ in $\T^d$ constructed in Lemma \ref{lem-a2}. 
Fourier expanding
\[ 1_{n \equiv  a  \md{q}} = \frac{1}{q} \sum_{r \mdsub{q}} e(-\frac{ra}{q}) e(\frac{rn}{q} )\] and
\[ \psi^+_{\eps}(t) = \sum_{\mathbf{r} \in \Z^d} \widehat{\psi^+_{\eps}}(\mathbf{r}) e(\mathbf{r} \cdot t),\] this may be written as
\begin{equation}\label{star-eq-1} \sum_{a \in \mathscr{A}} \frac{1}{q} \sum_{r \mdsub{q}} e(-\frac{ra}{q})\sum_{\mathbf{r} \in \Z^d}\widehat{\psi^+_{\eps}}(\mathbf{r}) e(-\mathbf{r} \cdot (z + z')) \sum_{n \in I} e(\mathbf{r} \cdot \theta n^2 + \frac{rn}{q} ).\end{equation} The contribution from $\mathbf{r} = 0$ is 
\[ \frac{1}{q}(\int \psi_{\eps}^+ )\sum_{a \in \mathscr{A}} \sum_{r \mdsub{q}} e(-\frac{ra}{q}) \sum_{n \in I} e(\frac{rn}{q}).\] 
If $r \neq 0$, the inner sum over $n$ is at most $q$ in magnitude, since the sum of $e(rn/q)$ over any interval of length $q$ is zero. The total contribution from these terms is thus bounded independently of $N$, and so may be ignored if $N$ is large enough. The contribution from $r = 0$ is $\frac{1}{q}\mathfrak{S}(u + u', q) (\int \psi_{\eps}^+) |I|$, which is $\ll (2\eps)^{d+1}q^{-1}\mathfrak{S}(u + u', q) N^{1/2}$ by Lemma \ref{lem-a2} (1) and the bound $|I| \ll \eps N^{1/2}$. 
 The contribution to \eqref{star-eq-1} from $\mathbf{r} \neq 0$ is bounded above by
\[ \mathfrak{S}(u + u', q) \sum_{\mathbf{r} \in \Z^d \setminus \{0\}} |\widehat{\psi^+_{\eps}}(\mathbf{r})| \sup_{r \mdsub{q}} \big|\sum_{n \in I} e(\mathbf{r} \cdot \theta n^2 + \frac{r}{q}n )   \big|.\]
By Corollary \ref{cor-weyl-1} and Lemma \ref{lem-a2} (2), this is
\[ \ll qN^{1/2}\mathcal{F}(M)^{-1/C_2} \sum_{\mathbf{r} \in \Z^d \setminus \{0\}} |\widehat{\psi^+_{\eps}}(\mathbf{r})| \Vert \mathbf{r} \Vert_{1} \ll_{\delta, M} N^{1/2}\mathcal{F}(M)^{-1/C_2} .\]
(Lemma \ref{lem-a2} (2) gives an implied constant depending on $d, \eps$, but we have $d \leq M$ and $\eps = \delta/M$.)
Hence if $\mathcal{F}$ is chosen to be sufficiently rapidly-growing, this is smaller than $(\frac{2\delta}{M})^{M+1}M^{-1} N^{1/2}$, which is at most $N^{1/2} (2\eps)^{d+1} q^{-1} N^{1/2}$.
\end{proof}

We will also need the following fact, proven using very similar techniques.

\begin{lemma}\label{lem-square-2}
Suppose that $\mathcal{F}$ grows sufficiently rapidly, and that $N$ is sufficiently large in terms of $\delta, M$. Suppose that $n \in X$. Then the number of $n' \in X'$ for which $n + n'$ is a square is $\ll (2\eps')^{d+1}q^{-1}\mathfrak{S}(u + u', q) N^{1/2}$, uniformly in $n$.\end{lemma}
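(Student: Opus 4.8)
The plan is to mimic the proof of Lemma~\ref{lem-square} very closely, fixing $n \in X$ and counting squares in the translated set $n + X'$. Write $n + n' = m^2$; since $n' \in X'$ forces $n' \equiv u' \md{q}$ and $\frac{n'}{N} \in [x'-\eps', x'+\eps']$ and $\Vert \theta n' - z'\Vert_{\T^d} \le \eps'$, we have $m^2 \equiv n + u' \md{q}$, with $\frac{m^2}{N} \in [x+x'-\eps-\eps', x+x'+\eps+\eps']$ roughly (using $\frac{n}{N} \in [x-\eps,x+\eps]$), and $\Vert \theta m^2 - (\theta n + z')\Vert_{\T^d} \le \eps'$. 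So the count is at most
\[ \sum_{a \in \mathscr{B}} \sum_{m \in J} 1_{m \equiv a \mdsub{q}}\, \psi^+_{\eps'}(\theta m^2 - \theta n - z'), \]
where $\mathscr{B} = \{ a \in \Z/q\Z : a^2 \equiv n + u' \md{q}\}$ and $J$ is a discrete interval of length $\ll \eps N^{1/2}$. Note $|\mathscr{B}| = \mathfrak{S}(n+u', q)$. Since $n \equiv u \md{q}$, we have $n + u' \equiv u + u' \md{q}$, so $\mathfrak{S}(n+u',q) = \mathfrak{S}(u+u',q)$ — this is the key observation making the $n$-dependence disappear from the main term, and it is the one new point relative to Lemma~\ref{lem-square}.

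First I would Fourier-expand $1_{m \equiv a \mdsub q}$ over residues $r \md q$ and $\psi^+_{\eps'}$ over frequencies $\mathbf{r} \in \Z^d$, exactly as in \eqref{star-eq-1}, obtaining
\[ \sum_{a \in \mathscr{B}} \frac{1}{q} \sum_{r \mdsub q} e\big(-\tfrac{ra}{q}\big) \sum_{\mathbf{r} \in \Z^d} \widehat{\psi^+_{\eps'}}(\mathbf{r})\, e\big(-\mathbf{r}\cdot(\theta n + z')\big) \sum_{m \in J} e\big(\mathbf{r}\cdot\theta m^2 + \tfrac{rm}{q}\big). \]
Then I would split into three contributions. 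The term $\mathbf{r} = 0$, $r = 0$ gives $\frac{1}{q}\mathfrak{S}(n+u',q)(\int \psi^+_{\eps'})|J| \ll (2\eps')^{d+1} q^{-1} \mathfrak{S}(u+u',q) N^{1/2}$ by Lemma~\ref{lem-a2}(1) and $|J| \ll \eps N^{1/2}$ (here one should note $\int \psi^+_{\eps'} \ll (2\eps')^d$ and, importantly, $\eps$ and $\eps'$ differ by a factor $O_{\delta,M}(1)$, so this is absorbed). The terms $\mathbf{r} = 0$, $r \ne 0$ contribute $O_q(1)$ since $\sum_{m} e(rm/q)$ over any length-$q$ interval vanishes; negligible for $N$ large. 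The terms $\mathbf{r} \ne 0$ are bounded by
\[ \mathfrak{S}(n+u',q) \sum_{\mathbf{r} \in \Z^d\setminus\{0\}} |\widehat{\psi^+_{\eps'}}(\mathbf{r})| \sup_{r \mdsub q}\Big| \sum_{m \in J} e\big(\mathbf{r}\cdot\theta m^2 + \tfrac{r}{q}m\big)\Big|, \]
and since $|J| \ll \eps N^{1/2} \le N^{1/2}$ and $\theta$ is $(\mathcal{F}(M), N)$-irrational, Corollary~\ref{cor-weyl-1} with $k=2$ bounds the inner sum by $N^{1/2}\Vert\mathbf{r}\Vert_1 \mathcal{F}(M)^{-1/C_2}$; then Lemma~\ref{lem-a2}(2) makes $\sum_{\mathbf{r}}|\widehat{\psi^+_{\eps'}}(\mathbf{r})|\Vert\mathbf{r}\Vert_1$ converge with constant $O_{d,\eps'}(1) = O_{\delta,M}(1)$, so the whole thing is $\ll_{\delta,M} q N^{1/2}\mathcal{F}(M)^{-1/C_2}$, which is $\le (2\eps')^{d+1}q^{-1}\mathfrak{S}(u+u',q)N^{1/2}$ once $\mathcal{F}$ grows fast enough and $N$ is large.

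The main thing to watch is uniformity in $n$: the frequency $z'$ in Lemma~\ref{lem-square} is here replaced by $\theta n + z'$, but this only enters through a unimodular phase $e(-\mathbf{r}\cdot(\theta n + z'))$, so every bound above is uniform in $n$; and the arithmetic factor $\mathfrak{S}(n+u',q)$ equals $\mathfrak{S}(u+u',q)$ independently of $n$ because $n \equiv u \md q$. There are no real obstacles — the only mild bookkeeping point is tracking the $\eps$ versus $\eps'$ discrepancy in the length of $J$ (it is $\ll \eps N^{1/2}$, governed by $X$, not $X'$), which is harmless since by \eqref{eps-def}, \eqref{eps-def-prime} the ratio $\eps/\eps'$ is bounded in terms of $\delta, M$ and gets swallowed by the same ``$\mathcal{F}$ large, $N$ large'' mechanism used in Lemma~\ref{lem-square}.
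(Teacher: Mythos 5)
Your overall strategy is exactly the paper's: fix $n$, majorise by $\sum_{a}\sum_{m\in J}1_{m\equiv a\,(q)}\psi^+_{\eps'}(\theta m^2-\theta n-z')$, Fourier-expand, read off the main term from $\mathbf{r}=0,r=0$, kill $\mathbf{r}=0,r\neq 0$ by periodicity, and beat the $\mathbf{r}\neq 0$ terms with Corollary \ref{cor-weyl-1} and the rapid growth of $\mathcal{F}$ (using $\eps'\gg_{\delta,M}1$). The observation that $\mathfrak{S}(n+u',q)=\mathfrak{S}(u+u',q)$ because $n\equiv u\ \mdlem{q}$ is correct and is indeed what makes the bound uniform in $n$.

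However, there is a genuine error in your main term. Since $n$ is \emph{fixed}, the constraint $|\frac{n'}{N}-x'|\leq\eps'$ forces $m^2=n+n'$ into the interval $[\,n+(x'-\eps')N,\;n+(x'+\eps')N\,]$ of length $2\eps' N$, so $J$ has length $\ll\eps' N^{1/2}$ (this is the interval the paper takes). You instead let $n$ range over all of $X$ and get $|J|\ll\eps N^{1/2}$, which makes your main term $\ll(2\eps')^{d}\,\eps\, q^{-1}\mathfrak{S}(u+u',q)N^{1/2}$ — too large by a factor $\eps/\eps'=\frac{dq}{M}(M/2\delta)^{d+1}$, by \eqref{eps-def} and \eqref{eps-def-prime}. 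Your claim that this factor is ``absorbed'' is not tenable: it is $O_{\delta,M}(1)$ but unbounded as $\delta\to 0$, and the ``$\mathcal{F}$ large'' mechanism only suppresses the $\mathbf{r}\neq 0$ error terms, not the main term. The lemma genuinely needs an absolute implied constant: downstream, the bound feeds into \eqref{sml-tor} and then into $|S\cap Y_-|\ll\delta(2\eps)^{d+1}q^{-1}\mathfrak{S}(u+u',q)N^{1/2}$, where the whole point is that choosing $\delta$ small beats $\eta$; an extra factor of $\eps/\eps'$ would turn $\delta$ into $\delta\cdot\frac{dq}{M}(M/2\delta)^{d+1}$, which blows up as $\delta\to 0$ and destroys the conclusion. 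The fix is simply to use the fixed $n$ when defining $J$; with $|J|\ll\eps' N^{1/2}$ the rest of your argument goes through verbatim.
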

\begin{proof}
Once again, write $\mathscr{A}$ for the set of square roots of $u + u'$ in $\Z/q\Z$. Writing $m^2 = n + n'$, an upper bound for the quantity in question is
\[ \sum_{a \in \mathscr{A}}\sum_{m \in J} 1_{m \equiv  a \mdsub{q}} \psi^+_{\eps'} (\theta m^2 - \theta n - \theta z'),\] where $J = [(n + (x' - \eps') N)^{1/2}, (n + (x' + \eps') N)^{1/2}]$ and $\psi^+_{\eps'}$ is the majorant constructed in Lemma \ref{lem-a2} (but now with the smaller parameter $\eps'$). Expanding in Fourier series much as before, this may be written as 
\[ \sum_{a \in \mathscr{A}}\frac{1}{q} \sum_{r \mdsub{q}} e(-\frac{ra}{q}) \sum_{\mathbf{r} \in \Z^d} \widehat{\psi^+_{\eps'}}(\mathbf{r}) e(-\mathbf{r} \cdot \theta (n + z')) \sum_{m \in J} e(\mathbf{r} \cdot \theta m^2 + \frac{r m}{q}). \]
Arguing in an essentially identical fashion to the proof of Lemma \ref{lem-square}, we see that this is bounded by a main term of size $\ll (2\eps')^{d+1} q^{-1}\mathfrak{S}(u + u', q) N^{1/2}$ plus an error of size $\ll_{\delta, M} N^{1/2} \mathcal{F}(M)^{-1/C_2}$. Choosing $\mathcal{F}$ to be sufficiently rapidly-growing, and recalling from \eqref{eps-prime-below} that $\eps' \gg_{\delta, M} 1$, this can be made $\ll (2\eps')^{d+1}q^{-1}\mathfrak{S}(u+u',q) N^{1/2}$.
\end{proof}

Finally, we need yet another fact with a similar proof. Define the set $Y_- \subset Y$ to be
\[\{ n \in \N : n \equiv u + u' \md{q},  |\frac{n}{N} - (x + x')| , \Vert \theta n - (z + z')\Vert_{\T^d} \leq \eps- 2\eps'\}.\label{y-minus-def}\]
\begin{lemma}\label{lem36}
Suppose that $\mathcal{F}$ grows sufficiently rapidly. Then the number of squares in $Y \setminus Y_{-}$ is $\ll \delta (2\eps)^{d+1} q^{-1} N^{1/2}$.
\end{lemma}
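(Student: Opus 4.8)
The plan is to count the squares in $Y \setminus Y_-$ by the same Fourier-expansion technique used in Lemma \ref{lem-square}, simply replacing the ``thickened interval'' coming from the radial constraint $|\frac nN - (x+x')| \le \eps$ by the thinner annular region where $\eps - 2\eps' < |\frac nN - (x+x')| \le \eps$, and likewise replacing the ball $B_\eps(0) \subset \T^d$ by the thin shell where $\eps - 2\eps' < \Vert \theta n - (z+z')\Vert_{\T^d} \le \eps$. Concretely, if $n = m^2 \in Y \setminus Y_-$ then either $m$ lies in the short interval $I_{\text{shell}} := [(x+x'-\eps)^{1/2}N^{1/2}, (x+x'-\eps+2\eps')^{1/2}N^{1/2}] \cup [(x+x'+\eps-2\eps')^{1/2}N^{1/2}, (x+x'+\eps)^{1/2}N^{1/2}]$ (total length $\ll \eps' N^{1/2}/\eps^{1/2} \ll \eps' N^{1/2}$, using $\eps,\eps' \gg_{\delta,M} 1$ so these constants are harmless), \emph{or} $\theta m^2 - (z+z')$ lies in the $\T^d$-shell $B_\eps(0) \setminus B_{\eps - 2\eps'}(0)$, which has measure $\ll d (2\eps)^{d-1}\eps' \ll_{\delta,M} \eps'$ and for which Lemma \ref{lem-a2} provides a smooth majorant $\psi^+$ with $\int \psi^+ \ll_{\delta,M} \eps'$ and $\sum_{\mathbf r} |\widehat{\psi^+}(\mathbf r)|\Vert\mathbf r\Vert_1 \ll_{\delta, M} 1$. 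In both cases we overcount $Y \setminus Y_-$ by summing over the relevant square roots $a \in \mathscr{A}$ of $u+u'$ modulo $q$.

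For the first case the estimate is immediate: the number of squares $m^2$ with $m \in I_{\text{shell}}$ and $m \equiv a \md q$ for some $a \in \mathscr A$ is at most $\mathfrak S(u+u',q)(|I_{\text{shell}}|/q + 1) \ll (2\eps)^{d+1} q^{-1}\mathfrak S(u+u',q) N^{1/2}$ once we absorb the $\eps,\eps'$-dependent constants — and in fact this is $\ll \delta (2\eps)^{d+1} q^{-1} N^{1/2}$ since $\eps'/\eps = \delta (2\eps)^d/(dq)$ carries an explicit factor of $\delta$. For the second case I would run the exact same computation as in Lemma \ref{lem-square}: Fourier-expand $1_{m \equiv a \md q}$ and $\psi^+(\theta m^2 - (z+z'))$, isolate the $\mathbf r = 0$, $r = 0$ term, which contributes $\ll q^{-1}\mathfrak S(u+u',q)(\int\psi^+)|I| \ll \delta(2\eps)^{d+1}q^{-1}N^{1/2}$ (the gain of $\delta$ coming from $\int\psi^+ \ll_{\delta,M}\eps'$ and the relation \eqref{eps-def-prime}), absorb the $\mathbf r = 0$, $r \ne 0$ terms into a bound independent of $N$, and bound the $\mathbf r \ne 0$ terms by Corollary \ref{cor-weyl-1} to get $\ll_{\delta,M} N^{1/2}\mathcal F(M)^{-1/C_2}$, which is made negligible by taking $\mathcal F$ sufficiently rapidly growing. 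Adding the two cases gives the claimed bound $\ll \delta(2\eps)^{d+1}q^{-1}N^{1/2}$.

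The only mild subtlety — and the step I would be most careful about — is bookkeeping the $\eps$-versus-$\eps'$ dependence so that the final bound genuinely carries the factor $\delta$ (not merely $\delta^{\,O_M(1)}$ or $O_{\delta,M}(1)$), since the statement claims $\ll \delta(2\eps)^{d+1}q^{-1}N^{1/2}$ with $\delta$ appearing linearly. This is exactly what \eqref{eps-def} and \eqref{eps-def-prime} are designed to deliver: $\eps' = \frac{\delta}{dq}(2\eps)^{d+1}$, so both the interval-length ratio $|I_{\text{shell}}|/|I| \asymp \eps'/\eps$ and the majorant mass $\int\psi^+ \asymp \eps'$ contribute a clean factor of $\frac{\delta}{dq}(2\eps)^d$ relative to the analogous quantities in Lemma \ref{lem-square}, which is more than enough. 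Everything else is a verbatim repeat of Lemmas \ref{lem-square} and \ref{lem-square-2}, so I would simply write ``arguing exactly as in the proof of Lemma \ref{lem-square}'' and point to the one place where the gain of $\delta$ enters.
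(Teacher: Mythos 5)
Your overall strategy is the paper's: split $Y \setminus Y_-$ according to whether the radial condition or the $\T^d$-condition fails, count the radial shell elementarily, and handle the $\T^d$-shell with a smooth majorant plus Corollary \ref{cor-weyl-1}. The radial case and the Weyl-sum part are fine. But there is one genuine misstep, and it occurs precisely at the point you flagged as delicate: you assert that Lemma \ref{lem-a2} provides a majorant $\psi^+$ of the $d$-dimensional shell $B_{\eps}(0) \setminus B_{\eps - 2\eps'}(0)$ with $\int \psi^+$ comparable to the shell's measure $\asymp d(2\eps)^{d-1}\eps'$. Lemma \ref{lem-a2} constructs majorants and minorants of \emph{balls} only, and its construction smooths at scale $\eps/10d$ — a scale enormously larger than $\eps'$. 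The natural candidate $\psi^+_{\eps} - \psi^-_{\eps - 2\eps'}$ is indeed a nonnegative majorant of the shell, but its integral is of order $(2\eps)^d$ (the boundary fuzz of width $\eps/10d$ swamps the shell of width $2\eps'$), and Lemma \ref{lem-a2}(1) only pins $\int\psi^{\pm}$ down to within a factor of $4$, which is hopeless for extracting the factor $\delta$. So the main term of your second case comes out as $\ll (2\eps)^{d+1}q^{-1}\mathfrak{S}(u+u',q)N^{1/2}$ with no $\delta$, which is exactly the bound of Lemma \ref{lem-square} and not the statement being proved.

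The repair is to avoid the $d$-dimensional shell altogether: since $\Vert w \Vert_{\T^d}$ is a sup-norm, $n \in Y$ fails the condition $\Vert \theta n - (z+z')\Vert_{\T^d} \leq \eps - 2\eps'$ if and only if $\eps - 2\eps' < \Vert \theta_i n - (z_i + z_i')\Vert_{\T} \leq \eps$ for some single coordinate $i$. For each fixed $i$ this is a union of two intervals in $\T$ of length $2\eps'$ each, which you can majorize by translates of the \emph{one-dimensional} majorant of Lemma \ref{lem-a2} at scale $\eps'$ (not $\eps$); that gives $\int \psi \ll \eps'$ and $\sum_r |\widehat{\psi}(r)||r| \ll_{\eps'} 1$, the Weyl-sum error is handled exactly as you describe, and summing over $i$ yields $O(d\eps' N^{1/2})$, which equals $O(\delta (2\eps)^{d+1} q^{-1} N^{1/2})$ by \eqref{eps-def-prime}. (This is what the paper does; note it also discards the congruence and the remaining Bohr conditions entirely in this case, summing over all $n \leq 2N^{1/2}$, which costs nothing.) With that substitution your argument goes through; the rest of your bookkeeping — in particular that the radial case contributes $O(\eps' N^{1/2})$ and that $\eps'$ carries the required factor of $\delta$ — is correct.
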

\begin{proof}
If $n \in Y \setminus Y_{-}$ then either
\begin{equation}\label{prob1} \eps - 2\eps' < |\frac{n}{N}  - (x + x')| < \eps \end{equation}
or
\begin{equation}\label{prob2}  \eps - 2\eps' < \Vert \theta_i n - (z_i + z'_i)\Vert_{\T^d} < \eps \end{equation} for some $i \in \{1,\dots, d\}$.
The number of squares satisfying \eqref{prob1} is elementarily seen to be $O(\eps' N^{1/2})$, which\footnote{Obviously this bound is rather crude, as we have completely ignored the fact that additionally $n \equiv u + u' \md{q}$ and $\Vert \theta n - (z + z') \Vert_{\T^d} \leq \eps$, but this is of little consequence in the grand scheme of the argument.} is bounded as desired because of the choice of $\eps'$ (cf. \eqref{eps-def-prime}). 

We now obtain an upper bound for the number of squares satisfying \eqref{prob2}. By translating the function $\psi^+_{\eps}$ constructed in Lemma \ref{lem-a2} (with $d = 1$ in that lemma) we may obtain a smooth majorant $\psi$ for the interval $\{ t \in \T : \eps - 2\eps' < \Vert t - (z_i + z'_i)\Vert_{\T}< \eps\}$ such that
\begin{equation}\label{psi-prop} \int \psi \ll \eps', \quad \sum_r |\widehat{\psi}(r)| |r| \ll_{\eps'} 1.\end{equation} Then the number of squares satisfying \eqref{prob2} is bounded above by
\[ \sum_{n \leq 2N^{1/2}} \psi(\theta_i n^2) = \sum_{r \in \Z} \widehat{\psi}(r) \sum_{n \leq 2N^{1/2}} e(r \theta_i n^2).\]
The term with $r = 0$ is $2N^{1/2} (\int \psi) \ll \eps' N^{1/2}$. By Corollary \ref{cor-weyl-1} (applied with $d = 1$) the contribution from the terms with $r \neq 0$ is \[ \ll  N^{1/2}\mathcal{F}(M)^{-1/C_2}\sum_{r \neq 0} |\widehat{\psi}(r)||r|. \]
By \eqref{psi-prop} this is $\ll_{\eps'} N^{1/2} \mathcal{F}(M)^{-1/C_2}$ which, in view of \eqref{eps-prime-below}, is $O(\eps' N^{1/2})$ provided $\mathcal{F}(M)$ grows sufficiently rapidly. Thus the total number of $n$ satisfying \eqref{prob2} for some $i \in \{1,\dots, d\}$ is $O(\eps' d N^{1/2})$, which is bounded as claimed by the choice of $\eps'$.
\end{proof}

To complete the proof of Proposition \ref{sec-3-main} it suffices to show that $A + A$ contains all but $\ll \delta (2\eps)^{d+1} q^{-1} \mathfrak{S}(u + u', q)N^{1/2}$ of the squares in $Y$. Indeed if $\delta$ is chosen small enough then this will be $\leq \eta (2\eps)^{d+1} q^{-1} \mathfrak{S}(u + u', q)N^{1/2}$, the bound claimed.
Let $S \subset Y$ be the set of all squares in $Y$ which are not in $A + A$; thus it suffices to establish the bound
\begin{equation}\label{s-bound} |S| \ll \delta (2\eps)^{d+1} q^{-1}\mathfrak{S}(u + u', q) N^{1/2} . \end{equation}
 Recall the definitions \eqref{x-def}, \eqref{x-prime-def} of $X, X'$. We will need to introduce smoothed approximants $\chi, \chi'$ to the characteristic functions of $X, X'$ respectively, with the following properties.

\begin{enumerate}
\item $\chi$ is a minorant for $X$, that is to say $0 \leq \chi(n) \leq 1_X(n)$ for all $n$;
\item $\chi'$ is a minorant for $X'$, that is to say $0 \leq \chi'(n) \leq 1_X(n)$ for all $n$;
\item $\chi(n) = 1$ on the set $\{n \in \N : n \equiv u \md{q}, |\frac{n}{N} - x| , \Vert \theta n - z \Vert_{\T^d}\leq \eps - \eps'\}$;
\item $\int^1_0 |\widehat{\chi}(t)| dt, \int^1_0 |\widehat{\chi'}(t)|dt = O_M(1)$;
\item $\sum_n \chi'(n) \gg (2\eps')^{d+1}q^{-1} N$.
\end{enumerate}
Such a function is constructed in Lemma \ref{lem-a3} (which must be applied twice, once with parameter $\eps$ and once with parameter $\eps'$).

In particular it follows from \eqref{unsmoothed} that 
\begin{equation}\label{small-bds} \sum_n |f_{\sml}\chi(n)| \ll \delta (2\eps)^{d+1} q^{-1} N, \qquad \sum_n |f_{\sml}\chi'(n)| \ll \delta (2\eps')^{d+1} q^{-1} N.\end{equation}

Our assumption that $A + A$ is disjoint from $S$ implies that
\begin{equation}\label{to-bound} \sum_{n \in S} (1_{A}\chi \ast 1_{A}\chi')(n)  = 0.\end{equation}
To investigate this expression, we use the decomposition from the regularity lemma, 
\[ 1_A = f_{\tor} + f_{\sml} + f_{\unf}.\] The left-hand side of \eqref{to-bound} may then be expanded as a sum of 9 terms
\[ T_{\bullet, \bullet'} :=  \sum_{n \in S} (f_{\bullet}\chi \ast f_{\bullet'} \chi')(n) ,\] where $\bullet, \bullet' \in \{ \tor, \sml, \unf\}$. 
Thus
\begin{equation}\label{tors} |T_{\tor, \tor}| \leq \sum_{(\bullet, \bullet') \neq (\tor, \tor)} |T_{\bullet, \bullet'}|.\end{equation}
We analyse these 9 terms $T_{\bullet, \bullet'}$ separately, beginning with the ``main term'' $T_{\tor, \tor}$.

 Writing \[ f_{\tor}(n) = F(n \md{q}, \frac{n}{N}, \theta n),\] we may expand $T_{\tor, \tor}$ as  
\begin{align*}  \sum_{n \in S}&  \sum_m   F(m \md{q}, \frac{m}{N}
 , \theta m) \chi(m)   \\ & \times F(n - m \md{q}, \frac{n - m}{N}, \theta(n - m)) \chi'(n - m).\end{align*}
Since $\chi(m)$ is supported where $m \equiv u \md{q}$ and $|\frac{m}{N} - x| ,\|\theta m - z\|_{\T^d} \leq \eps$, and since $F$ is $M$-Lipschitz, we have using \eqref{xz-cond} that
\[ F(m \md{q}, \frac{m}{N}, \theta m)\chi(m) = (F(u, x, z) + O(M\eps))\chi(m) \geq \frac{1}{80}\chi(m)\] if $\delta$ is sufficiently small (note, recalling the definition \eqref{eps-def} of $\eps$, that $M\eps = \delta$). Similarly,
\[ F(n - m \md{q}, \frac{n - m}{N}, \theta(n - m))\chi'(n - m) \geq \frac{1}{80} \chi'(n- m).\]
It follows that 
 \begin{align} \nonumber T_{\tor, \tor} & \gg \sum_{n \in S} \sum_m \chi(m) \chi'(n - m)\\ & = \sum_{n \in S} \sum_m \chi(n - m) \chi'(m).\label{eq73}\end{align}
Recall the definition \eqref{y-minus-def} of $Y_- \subset Y$. If $n \in Y_-$ and $m \in \Supp(\chi') \subset X'$ then $n - m \equiv u \md{q}$ and $|\frac{n - m}{N} - x| , \|\theta(n - m) - z\|_{\T^d} \leq \eps - \eps'$, and therefore by property (3) of $\chi$ we have $\chi(n - m) = 1$. It follows from these observations, \eqref{eq73} and point (5) of the properties of $\chi, \chi'$ that 
 \begin{align}\nonumber T_{\tor, \tor} & \gg \sum_{n \in S \cap Y_{-}} \sum_m \chi'(n-m) \chi'(m) \\ & \nonumber \gg |S \cap Y_{-}| \sum_{m} \chi'(m) \\ & \gg |S \cap Y_-| (2 \eps')^{d+1}q^{-1} N. \label{tor-tor}\end{align}
We set this estimate aside for later use. 

Next we look at the terms $T_{\bullet, \bullet'}$ in which $\bullet' = \sml$. Here we require the \emph{a priori} bound
\begin{equation}\label{eq701} |S| \ll (2\eps)^{d+1} q^{-1} \mathfrak{S}(u + u', q) N^{1/2}.\end{equation}
This is, of course, weaker than the result we are trying to prove, but it follows immediately from Lemma \ref{lem-square}. All of these terms $T_{\bullet, \sml}$ have the form
\[ T_{\bullet, \sml} = \sum_{n \in S} (g \ast f_{\sml} \chi')(n) = \sum_{n \in S} \sum_m g(n - m) f_{\sml}\chi'(m),\] where $g$ is some function bounded pointwise by $1$.
Thus
\[ |T_{\bullet, \sml}| \leq |S| \sum_m |f_{\sml} \chi'(m)|\] and so, by \eqref{eq701} and \eqref{small-bds}, 
\begin{equation}\label{bullet-sml} T_{\bullet, \sml} \ll \delta  (4\eps \eps')^{d+1} q^{-2}\mathfrak{S}(u + u', q) N^{3/2}.\end{equation}

 Next we turn to the bounding of
\[ T_{\sml, \tor} = \sum_{n \in S} (f_{\sml} \chi \ast f_{\tor} \chi')(n).\] This expands as 
\[ \sum_{n \in S} \sum_m f_{\sml}\chi(n - m) F(m \md{q}, \frac{m}{N}, \theta m) \chi'(m).\]
By the Lipschitz property of $F$ and the fact that $\chi'$ is supported on $X'$, this is
\[ F(u', x', z')\sum_{\substack{m \\ n \in S}}f_{\sml} \chi(n - m) \chi'(m) + O(\eps' M) \sum_{\substack{m \\ n \in S}} |f_{\sml}\chi(n - m)| \chi'(m).\] Since $\eps' < \eps < 1/M$, it follows that 
\[ T_{\sml, \tor}  \ll \sum_{n \in S} \sum_m |f_{\sml}\chi(n - m)| \chi'(m)= \sum_{n', m} |f_{\sml}\chi(n')| \chi'(m) 1_S(n' + m).\] By \eqref{small-bds}, this is
\[ \ll \delta (2\eps)^{d+1} q^{-1} N^{1/2}\sup_{n' \in \Supp \chi} \sum_m \chi'(m) 1_S(n' + m).\]
By Lemma \ref{lem-square-2} and the fact that $\Supp \chi \subset X$, $\Supp \chi' \subset X'$, we conclude that 
\begin{equation}\label{sml-tor} T_{\sml, \tor} \ll \delta  (4\eps \eps')^{d+1} q^{-2}\mathfrak{S}(u + u', q) N^{3/2}.\end{equation}

 In all of the remaining terms $T_{\bullet, \bullet'}$ that we have yet to bound, at least one of $\bullet, \bullet'$ is $\unf$. If $\bullet = \unf$ then such a term has the form
 \[ T_{\unf, \bullet'} = \sum_{n \in S} (f_{\unf} \chi \ast g)(n),\] where $g$ is some function bounded pointwise by $1$.
 This may be written in Fourier space as \[\int^1_0 \widehat{f_{\unf} \chi}(t) \widehat{g}(t) \widehat{1}_{S}(t) dt,\] where $g$ is a bounded function. By H\"older's inequality, the right-hand side here is bounded above by
\begin{equation}\label{holdered} \Vert \widehat{f_{\unf} \chi} \Vert_{\infty}^{1/3} \big(\int^1_0 |\widehat{f_{\unf} \chi} |^2  \big)^{1/3} \big( \int^1_0 |\widehat{g} |^2 \big)^{1/2}  \big( \int^1_0 |\widehat{1}_{S} |^{6} \big)^{1/6}.\end{equation}
By Parseval's identity and the boundedness of $f_{\unf}, g, \chi$ we have
\begin{equation}\label{pars-bds} \int^1_0 |\widehat{f_{\unf} \chi} |^2  ,  \int^1_0 |\widehat{g} |^2  \ll N,\end{equation}
and Proposition \ref{ell-6} tells us that 
\[ \int^1_0 |\widehat{1}_S(t)|^6dt \ll N^2.\]
Finally, we note that 
\[ \widehat{f_{\unf}\chi}(t) = \int^1_0 \widehat{f_{\unf}}(t') \widehat{\chi}(t - t') dt' ,\] and so by property (4) of $\chi$ we have
\[ \Vert \widehat{f_{\unf} \chi} \Vert_{\infty} \leq \Vert \widehat{f_{\unf}} \Vert_{\infty} \Vert \widehat{\chi} \Vert_1 \ll_M N \mathcal{F}(M)^{-1}.\] Combining all these estimates together gives
\[ T_{\unf, \bullet'} = \sum_n (f_{\unf} \chi \ast g)(n) 1_S(n) \ll_M N^{3/2}\mathcal{F}(M)^{-1/3}.\] 
If the growth of $\mathcal{F}$ is sufficiently rapid, we obtain in view of the fact that $d, q \leq M$, $\eps = \delta/M$ and \eqref{eps-prime-below} that
\begin{equation}\label{unf-bullet} T_{\unf,\bullet'} \ll \delta (4\eps \eps')^{d+1} q^{-2} N^{3/2}.\end{equation}
An almost identical argument (relying instead on the bound $\Vert \chi' \Vert_1 = O_{M}(1)$) yields
\begin{equation}\label{unf-prime-bullet} T_{\bullet, \unf} \ll \delta (4\eps \eps')^{d+1} q^{-2} N^{3/2}.\end{equation}

Combining \eqref{tor-tor}, \eqref{bullet-sml}, \eqref{sml-tor}, \eqref{unf-bullet} and \eqref{unf-prime-bullet} with \eqref{tors} we obtain
\[ |S \cap Y_{-}|  (2 \eps')^{d+1}  q^{-1} N\ll \delta (4 \eps \eps')^{d+1} q^{-2} \mathfrak{S}(u + u', q)N^{3/2},\] and therefore
\[ |S \cap Y_-| \ll \delta  (2\eps)^{d+1} q^{-1}\mathfrak{S}(u + u', q) N^{1/2}.\]
Lemma \ref{lem36} provides the bound
\[ |S \cap (Y \setminus Y_{-})| \ll \delta  (2\eps)^{d+1} q^{-1}\mathfrak{S}(u + u', q) N^{1/2}.\]
Combining this with the preceding yields
\[ |S| \ll  \delta (2\eps)^{d+1} q^{-1} \mathfrak{S}(u + u', q)N^{1/2},\]
which is exactly \eqref{s-bound}. This completes the proof of Proposition \ref{sec-3-main}.

\section{The square-root of a Bohr set}
\label{sec5}

Suppose that $\N$ is partitioned into two colour classes $V$ and $W$, neither of which has a monochromatic solution to $x + y = z^2$. The main result of the last section, Proposition \ref{sec-3-main}, shows that if $V \cap [N, 2N)$ has size at least $N/2$ then $V + V$ contains almost all of the squares in a ``Bohr set'' $\Lambda := \{n \in \N : n \equiv b \md{q}, |\frac{n}{N} - x|,\Vert \theta n - z\Vert_{\T^d} \leq \eps\}$. This means that most of $\sqrt{\Lambda}$ must lie in $W$. In this section we examine the additive properties of such square roots $\sqrt{\Lambda}$. (Recall that $\sqrt{\Lambda}$ is by definition the set of \emph{integers} $n$ such that $n^2 \in \Lambda$.)


Here is the main result of the section.

\begin{proposition}\label{sec-4-main}
Let $\eta > 0$. Then there is a function $\Omega : \N^3 \rightarrow \R_+$ with the following property. Suppose we have $q, d \in \N$, $\eps > 0$, $x \in [0,3]$, $\theta, z \in \T^d$ and $N \in \N$. Suppose that $\theta$ is $(\Omega(q,d,1/\eps), N)$-irrational. Suppose that $b$ is a square modulo $q$ and set
\[ Y := \{ n \in \N : n \equiv b \md{q}, |\frac{n}{N} - x| , \Vert \theta n - z\Vert_{\T^d} \leq \eps\}.\]
Let $Y' \subset Y$ be a set containing all but at most $\eta (2\eps)^{d+1} q^{-1}\mathfrak{S}(b,q) N^{1/2}$ of the squares in $Y$. Then, for all but at most $O(\eta \eps q^{-1} N^{1/4})$ of the elements $t \in Q$, where 
\begin{equation}\label{Q-def} Q := \P( \big[ (2x)^{1/4} - \frac{\eps}{100}, (2x)^{1/4} + \frac{\eps}{100}\big]; N^{1/4}, q),  \end{equation}
we have $t^2 \in \sqrt{Y'} + \sqrt{Y'}$.
\end{proposition}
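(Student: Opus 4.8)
The plan is to show that all but $O(\eta\,\eps q^{-1}N^{1/4})=O(\eta|Q|)$ of the $t\in Q$ satisfy
\[ r(t)\ :=\ \#\{m\in\N: m^2\in Y',\ (t^2-m)^2\in Y'\}\ \ge\ 1,\]
since such an $m$ exhibits $t^2=m+(t^2-m)\in\sqrt{Y'}+\sqrt{Y'}$. Let $\tilde r(t)$ be the same count with $Y$ in place of $Y'$. As $\sqrt{Y}\setminus\sqrt{Y'}$ embeds into the set of squares of $Y$ missed by $Y'$, it has size at most $E:=\eta(2\eps)^{d+1}q^{-1}\mathfrak{S}(b,q)N^{1/2}$, and hence $r(t)\ge\tilde r(t)-2D(t)$ with $D(t):=\#\{m:m^2\in Y\setminus Y',\ (t^2-m)^2\in Y\}$ (the symmetric count $\#\{m:m^2\in Y,(t^2-m)^2\in Y\setminus Y'\}$ equals $D(t)$ after $m\mapsto t^2-m$). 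One simplification used throughout: since $q\mid t$ we have $t^2\equiv 0\md q$, so $m^2\equiv b\md q$ already forces $(t^2-m)^2\equiv b\md q$, and $m,\ t^2-m$ run over the same $\mathfrak{S}(b,q)$ residue classes.

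\emph{A pointwise lower bound for $\tilde r(t)$ away from a small bad set.} Fix $t\in Q$. The conditions $m^2,(t^2-m)^2\in Y$ amount to $m\in I_t:=I_1\cap(t^2-I_1)$, where $I_1:=\big[((x-\eps)N)^{1/2},((x+\eps)N)^{1/2}\big]$, together with $m^2\equiv b\md q$ and $\theta m^2,\theta(t^2-m)^2\in B_\eps(z)\subset\T^d$. The defining interval of $Q$, with its radius $\eps/100$ against the radius $\eps$ of $Y$, is placed so that $t^2$ sits well inside $[2\min I_1,2\max I_1]$ for every $t\in Q$, whence $|I_t|\gg\eps N^{1/2}$. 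Expand $\tilde r(t)$ by replacing $1_{B_\eps(z)}$ with a smooth minorant $\psi^-_\eps$ with rapidly decaying Fourier coefficients (Lemma \ref{lem-a2}), truncated to $\Vert\mathbf r\Vert_1\le R(\eps,d)$ at negligible cost; $1_{m^2\equiv b\,(q)}$ with its Fourier expansion mod $q$; and $1_{I_t}$ with a smooth minorant $\chi^-$ with $\Vert\widehat{\chi^-}\Vert_1=O(1)$; then substitute $(t^2-m)^2=m^2-2t^2m+t^4$. Three kinds of term appear: (i) the zero harmonics, contributing a main term $(\mathfrak{S}(b,q)/q)\big(\int\psi^-_\eps\big)^2\sum_m\chi^-(m)\gg M_0:=(\mathfrak{S}(b,q)/q)(2\eps)^{2d+1}N^{1/2}$; (ii) the harmonics $(\mathbf r,\mathbf s)$ with $\mathbf r+\mathbf s\ne0$, where the quadratic phase $(\mathbf r+\mathbf s)\cdot\theta\,m^2$ survives in a sum over the interval $I_t$ of length $\le N^{1/2}$, so Corollary \ref{cor-weyl-1} (with $k=2$) bounds their total by $\ll_{\eps,d}\mathfrak{S}(b,q)N^{1/2}\Omega(q,d,1/\eps)^{-1/C_2}$, uniformly in $t$; (iii) the harmonics $(\mathbf r,-\mathbf r)$ with $\mathbf r\ne0$ and the $r\ne0$ residue terms, where the quadratic phase cancels, leaving geometric sums $\sum_{m\in I_t}e\big((2t^2\,\mathbf r\cdot\theta+r/q)m\big)=O\big(\min(|I_t|,\Vert 2t^2\,\mathbf r\cdot\theta+r/q\Vert_{\T}^{-1})\big)$. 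Declare $t$ \emph{bad} if $\Vert 2t^2\,\mathbf r\cdot\theta+r/q\Vert_{\T}<\rho$ for some $r\md q$ and some nonzero $\mathbf r$ with $\Vert\mathbf r\Vert_1\le R(\eps,d)$, where $\rho$ is fixed below; for non-bad $t$ the sums in (iii) are $\le\rho^{-1}$. Taking $\Omega$ large enough (in terms of $q,d,1/\eps,\eta$) then gives $\tilde r(t)\gg M_0$ for every non-bad $t\in Q$.

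\emph{The bad set is small --- this is the main point.} Writing $t=q\tau$ and $\beta:=\mathbf r\cdot\theta$, the quantity controlling (iii) is $2q^2\beta\,\tau^2+r/q$, where $\tau$ ranges over an interval of length $\asymp\eps q^{-1}N^{1/4}$ about $\asymp q^{-1}N^{1/4}$. On this interval $\tau\mapsto 2q^2\beta\,\tau^2+r/q$ is monotone, with $|\textrm{derivative}|\gg q N^{1/4}\Vert\mathbf r\cdot\theta\Vert_{\T}\gg q A N^{-3/4}$ --- using $|\beta|\ge\Vert\mathbf r\cdot\theta\Vert_{\T}\ge A/N$, valid as $A:=\Omega(q,d,1/\eps)$ may be taken larger than $\Vert\mathbf r\Vert_1$ --- and with total variation $\ll\eps|\beta|N^{1/2}\ll\eps A N^{-1/2}=o(1)$ for $N$ large. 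Hence the preimage of any band $\{\Vert\cdot\Vert_{\T}<\rho\}$ is a single interval of length $\ll\rho N^{3/4}/(qA)$, so over the $O_{q,\eps,d}(1)$ pairs $(r,\mathbf r)$ the bad set has size $\ll_{\eps,d}\rho N^{3/4}/A$. Choosing $\rho:=c\,\eta\eps A\,q^{-1}N^{-1/2}$ with $c$ a small absolute constant makes this $\le\frac12\eta\eps q^{-1}N^{1/4}=O(\eta|Q|)$; and then $\rho^{-1}\asymp_{\eps,d}qN^{1/2}/(\eta A)$, so the bound on (iii) used above is below any prescribed fraction of $M_0$ once $A$ is large in terms of $q,d,1/\eps,\eta$, which is permitted.

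\emph{The defect is small on average, and conclusion.} For fixed $m$ with $m^2\in Y$, the quantity $\#\{t\in Q:(t^2-m)^2\in Y\}$ is controlled --- after passing to a majorant for $\theta(t^2-m)^2\in B_\eps(z)$ and setting $t=q\tau$ --- by degree-$4$ exponential sums in $\tau$ over an interval of length $\le N^{1/4}$ with leading coefficient $q^4\mathbf w\cdot\theta$; Corollary \ref{cor-weyl-1} with $k=4$ kills these for $\mathbf w\ne0$, and since the constraint $t^2-m\in I_1$ only trims $Q$, one gets $\#\{t\in Q:(t^2-m)^2\in Y\}\ll(2\eps)^d|Q|$ uniformly in $m$. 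Summing over the $\le E$ values of $m$ with $m^2\in Y\setminus Y'$ gives $\sum_{t\in Q}D(t)\ll E(2\eps)^d|Q|\ll\eta M_0|Q|$, so by Markov all but $O(\eta|Q|)$ of $t\in Q$ have $D(t)$ smaller than any prescribed multiple of $M_0$. For $t\in Q$ that is neither bad nor in this Markov exceptional set --- all but $O(\eta|Q|)=O(\eta\eps q^{-1}N^{1/4})$ of $Q$ --- we conclude $r(t)\ge\tilde r(t)-2D(t)\gg M_0>0$, as required. I expect the crux to be the bad-set estimate of the third paragraph: a second-moment argument, the standard way to upgrade a first-moment bound to ``most $t$'', would here produce quadratic exponential sums in $t$ over an interval of length only $\asymp N^{1/4}$, beyond the reach of the available $(A,N)$-irrationality; the monotone, slowly varying derivative of $\tau\mapsto 2q^2\beta\tau^2$ is what sidesteps this.
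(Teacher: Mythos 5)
Your overall architecture (count representations $t^2=m+(t^2-m)$ with both parts in $\sqrt{Y'}$, compare with the $Y$-count, and control the defect $D(t)$ by a quartic Weyl sum over a length-$N^{1/4}$ interval plus Markov) is sound and runs parallel to the paper, whose Lemma \ref{lem5.2} does exactly your defect estimate. The genuine gap is in the step you yourself flag as the crux: the bad-set bound. You claim that for fixed nonzero $\mathbf{r}$ the map $\tau\mapsto 2q^2\beta\tau^2+r/q$, $\beta=\mathbf{r}\cdot\theta$, has total variation $\ll\eps|\beta|N^{1/2}\ll\eps A N^{-1/2}=o(1)$ over the range of $\tau$, so that the preimage of the band $\{\Vert\cdot\Vert_{\T}<\rho\}$ is a single interval of length $\ll\rho N^{3/4}/(qA)$. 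The inequality $|\beta|\ll A/N$ used there is not available: $(A,N)$-irrationality gives only the \emph{lower} bound $\Vert\mathbf{r}\cdot\theta\Vert_{\T}\geq A/N$, with no upper bound, and generically $\Vert\mathbf{r}\cdot\theta\Vert_{\T}\asymp 1$. In that case the total variation is of order $\eps N^{1/2}$, the phase wraps around $\T$ roughly $\eps N^{1/2}$ times, the bad preimage is a union of that many intervals, and your bound $\ll\rho N^{3/4}/A$ on the bad set collapses. Bounding the bad set in general amounts to equidistribution of a quadratic phase over an interval of length $\asymp N^{1/4}$ at scale $\rho\asymp N^{-1/2}$ --- precisely the short-interval information you correctly observe cannot be extracted from $(A,N)$-irrationality --- or else a multi-regime rational-approximation analysis (splitting $2q^2\mathbf{r}\cdot\theta$ into a nearby small-denominator rational plus a drift and treating each regime) which is not in your write-up. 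So the argument as written does not close.

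The paper avoids this issue entirely by not attempting a pointwise lower bound on the representation count for individual $t$. Instead (Lemma \ref{lem5.3}) it assumes a large exceptional set $T\subset Q$, bounds $\sum_{t\in T}\tilde r(t^2)$ from below, and controls the error terms by H\"older together with the sixth-moment estimate for squares, Proposition \ref{ell-6}, applied to $\widehat{1}_{T^2}$. This confines all $t$-dependence to the $L^6$ factor, so the only exponential sums needed are quadratic phases $\mathbf{r}\cdot\theta n^2$ over intervals of length $\approx N^{1/2}$, where Corollary \ref{cor-weyl-1} with the given $(\Omega,N)$-irrationality applies; no linear phases with $t$-dependent frequencies (your type (iii) terms) ever arise. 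If you want to rescue your pointwise strategy, you would need to import that restriction-type input or carry out the rational-approximation case analysis; as it stands, the key estimate is not proved. (A separate, shared issue: with $Q$ centred at $(2x)^{1/4}$ one has $t^2\approx(2x)^{1/2}N^{1/2}$, which for $x>2\eps$ lies below $2(x-\eps)^{1/2}N^{1/2}$, the bottom of $\sqrt{Y}+\sqrt{Y}$; your claim that $t^2$ sits well inside $[2\min I_1,2\max I_1]$, like the corresponding line in the paper, really requires the centre $(4x)^{1/4}$, and this should be read as a typo in the statement rather than a defect of your argument.)
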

(Recall that $\P(I; N, q) := \{n \in \Z : n/N \in I, q | n\}$.) 

The proof of this is a little complicated so we break it down into a few lemmas. We have $\sqrt{Y} = \bigcup_{a \in \mathscr{A}}Z^a_+ \cup Z^a_-$, where
\begin{align}\nonumber Z^a_{\pm} := \{ n \in \N : n \equiv\pm a \md{q}, & (x - \eps)^{1/2} N^{1/2} \leq n \leq (x + \eps)^{1/2} N^{1/2}, \\ & \Vert \theta n^2 - z \Vert_{\T^d} \leq \eps\},\label{z-def}\end{align} and $\mathscr{A}$ is the set of square roots of $b$ in $\Z/q\Z$.
Define
\[ \tilde Z^a_{\pm} := \sqrt{Y'} \cap Z^a_{\pm};\]
then
\[ \sum_{a \in \mathscr{A}}|Z^a_{\pm} \setminus \tilde Z^a_{\pm}| \ll \eta  (2\eps)^{d+1} q^{-1}\mathfrak{S}(b,q) N^{1/2}, \] by assumption. It follows that there is some $a \in \mathscr{A}$ such that 
\begin{equation}\label{spill}
|Z^a_{\pm} \setminus \tilde Z^a_{\pm}| \ll \eta  (2\eps)^{d+1} q^{-1} N^{1/2}.
\end{equation} 
Henceforth, we fix this value of $a$ and write $Z_{\pm} = Z^a_{\pm}$ for brevity.
To orient ourselves we remark that, if $\Omega$ grows sufficiently rapidly then one could prove that
\[ |Z_{\pm}| \sim  (2\eps)^{d+1} q^{-1} N^{1/2}\] (here we are using $\sim$ somewhat informally). We will not need to explicitly prove any statement of this kind separately.

\begin{lemma}\label{lem5.2}
Suppose that $n_+ \in Z_+$. Then
\[ \# \{ n_- \in Z_- : n_- + n_+  = q^2 m^2 \; \mbox{for some $m \in \Z$} \} \ll (2\eps)^{d+1} q^{-1} N^{1/4} ,\] the implied constant being uniform in $n_+$ and independent of $a$ \textup{(}recall that $Z_{\pm}$ depends on $a$\textup{)}.
Similarly, if $n_-\in Z_-$ then
\[ \# \{ n_+ \in Z_+ : n_- + n_+  = q^2 m^2 \; \mbox{for some $m \in \Z$} \} \ll (2\eps)^{d+1} q^{-1} N^{1/4} ,\]  the implied constant being uniform in $n_-$ and in $a$.
\end{lemma}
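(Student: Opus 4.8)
The plan is to fix $n_+ \in Z_+$ and count solutions to $n_- + n_+ = q^2m^2$ with $n_- \in Z_-$ by writing $n_-$ in terms of the free parameter $m$. Since $n_+$ is fixed, the map $m \mapsto n_- = q^2 m^2 - n_+$ is injective, so I need to bound the number of integers $m$ for which the resulting $n_-$ satisfies all three defining constraints of $Z_-$: the congruence $n_- \equiv -a \pmod q$, the interval constraint $(x-\eps)^{1/2}N^{1/2} \le n_- \le (x+\eps)^{1/2}N^{1/2}$, and the Bohr constraint $\Vert \theta n_-^2 - z \Vert_{\T^d} \le \eps$. The interval constraint on $n_-$ forces $m$ to lie in an interval $J$ of length $\ll q^{-1} N^{1/4}$ (since $q^2 m^2 \asymp N^{1/2}$ gives $m \asymp q^{-1}N^{1/4}$, and the allowed range for $n_-$ has width $\ll \eps N^{1/2}$, so $m$ ranges over width $\ll \eps q^{-1} N^{1/4}$). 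The congruence $q^2 m^2 - n_+ \equiv -a \pmod q$ is automatic given $n_+ \equiv a \pmod q$, so it imposes no further restriction (or at worst, since we want to be safe, it cuts $J$ down, which only helps). The real content is the Bohr constraint: I must show that among $m \in J$, the number with $\Vert \theta (q^2 m^2 - n_+)^2 - z \Vert_{\T^d} \le \eps$ is $\ll (2\eps)^{d+1} q^{-1} N^{1/4}$.

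To handle the Bohr constraint I would majorize $1_{\Vert t - z\Vert_{\T^d} \le \eps}$ by the smooth majorant $\psi^+_{\eps}$ from Lemma \ref{lem-a2}, Fourier expand $\psi^+_{\eps}(t) = \sum_{\mathbf r \in \Z^d} \widehat{\psi^+_\eps}(\mathbf r) e(\mathbf r \cdot t)$, and substitute $t = \theta(q^2 m^2 - n_+)^2 - z$. Expanding $(q^2 m^2 - n_+)^2 = q^4 m^4 - 2 q^2 n_+ m^2 + n_+^2$, the phase $\mathbf r \cdot \theta (q^2 m^2 - n_+)^2$ is a polynomial in $m$ of degree $4$ with leading coefficient $q^4 \mathbf r \cdot \theta$. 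The $\mathbf r = 0$ term contributes $(\int \psi^+_\eps) |J| \ll (2\eps)^d \cdot \eps q^{-1} N^{1/4} = (2\eps)^{d+1} q^{-1} N^{1/4}$ (up to the constant absorbed), which is exactly the claimed main term. For $\mathbf r \ne 0$, I apply Corollary \ref{cor-weyl-1} with $k = 4$: the interval $J$ has length $\ll q^{-1} N^{1/4} \le N^{1/4} = (N)^{1/k}$ with the relevant scale being $q^4 N$ (note $q^4 m^4 \asymp q^4 (q^{-1}N^{1/4})^4 = N$, consistent with $|J| \le (q^4 N)^{1/4}$... wait, we need $|J| \le N_*^{1/4}$ where $\theta$ is $(\Omega, N_*)$-irrational; here $|J| \ll q^{-1}N^{1/4} \le N^{1/4}$ and $\theta$ is $(\Omega(q,d,1/\eps), N)$-irrational, so we can take $N_* = N$). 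Since $\theta$ is $(\Omega(q,d,1/\eps), N)$-irrational with $\Omega$ chosen to grow sufficiently rapidly, Corollary \ref{cor-weyl-1} gives $|\sum_{m \in J} e(q^4 \mathbf r \cdot \theta m^4 + \dots)| \le N^{1/4} \Vert q^4 \mathbf r \Vert_1 \Omega^{-1/C_4} = q^4 N^{1/4} \Vert \mathbf r\Vert_1 \Omega^{-1/C_4}$. Summing against $\sum_{\mathbf r \ne 0} |\widehat{\psi^+_\eps}(\mathbf r)| \Vert \mathbf r \Vert_1 \ll_{d,\eps} 1$ (Lemma \ref{lem-a2}(2)) and choosing $\Omega$ large enough in terms of $q, d, 1/\eps$, the total $\mathbf r \ne 0$ contribution is $\ll (2\eps)^{d+1} q^{-1} N^{1/4}$ (indeed much smaller). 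Combining the two contributions proves the first bound; the second (fixing $n_- \in Z_-$ and counting $n_+ \in Z_+$) is symmetric, with $n_+ = q^2 m^2 - n_-$ and an identical computation, and the uniformity in $a$ and in the fixed point is manifest since no estimate depends on $a$, $n_+$ or $n_-$ beyond the fixed congruence class they live in.

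The main obstacle is purely bookkeeping: ensuring the irrationality parameter $\Omega(q,d,1/\eps)$ is chosen large enough to beat both the $q^4$ loss from rescaling the cube-free... quartic leading coefficient and the $\Vert \mathbf r \Vert_1$-weighted sum $\sum_{\mathbf r} |\widehat{\psi^+_\eps}(\mathbf r)|\Vert \mathbf r \Vert_1$, whose size depends on $\eps$ and $d$ through Lemma \ref{lem-a2}. Since Corollary \ref{cor-weyl-1} gains an arbitrarily large negative power $\Omega^{-1/C_4}$, this is routine: one simply specifies that $\Omega$ must dominate $(q^4 \cdot C(d,\eps))^{C_4}$ times a large power, which is legitimate because $\Omega$ is exactly the function we are allowed to construct in the statement of Proposition \ref{sec-4-main}. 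A minor point to be careful about is that $q^2 m^2 - n_+$ could in principle be negative or fall outside $[(x-\eps)^{1/2}N^{1/2}, (x+\eps)^{1/2}N^{1/2}]$ for most $m$; but that only restricts $J$ further, so all upper bounds survive, and one records $|J| \ll \eps q^{-1} N^{1/4}$ from the width of the valid $n_-$-window. No step here is genuinely hard once the Fourier-analytic framework of Section \ref{sec4} is in place — this lemma is a direct descendant of Lemmas \ref{lem-square} and \ref{lem-square-2}.
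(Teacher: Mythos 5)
Your proposal is correct and follows essentially the same route as the paper: parametrise $n_-$ (resp.\ $n_+$) as $q^2m^2 - n_+$ so that $m$ ranges over an interval of length $\ll \eps q^{-1}N^{1/4}$, majorise the Bohr condition by $\psi^+_{\eps}$, Fourier expand, take the $\mathbf{r}=0$ term as the main term, and kill the $\mathbf{r}\neq 0$ terms via Corollary \ref{cor-weyl-1} with $k=4$ applied to the quartic phase with leading coefficient $q^4\mathbf{r}\cdot\theta$, using Lemma \ref{lem-a2}(2) and the rapid growth of $\Omega$. The bookkeeping points you flag (the $q^4$ loss, the $\eps,d$-dependent constant from the majorant) are exactly the ones the paper absorbs into the choice of $\Omega$.
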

\begin{proof}  The quantity we are interested in can be written as
\[ \sum_{m \in I(n_+)} 1_{\Vert \theta (q^2m^2 - n_+)^2 - z\Vert_{\T^d} \leq \eps},\]
where $I(n_+)$ is the interval \[ \frac{1}{q}\big( (x - \eps)^{1/2} N^{1/2} + n_+ \big)^{1/2} \leq m \leq \frac{1}{q}\big( (x + \eps)^{1/2} N^{1/2} + n_+ \big)^{1/2},\] the cardinality of which satisfies
\begin{equation}\label{I-length-bd} |I(n_+)| \ll \eps q^{-1} N^{1/4}\end{equation} uniformly in $n_+$.
To bound this above, take a majorant $\psi^+_{\eps}$ to the unit ball $B_{\eps}(0) \subset \T^d$, as in Lemma \ref{lem-a2}. Then our quantity is at most
\[ \sum_{m \in I(n_+)} \psi^+_{\eps} (\theta (q^2m^2 - n_+)^2 - z).\]
Fourier expanding $\psi^+_{\eps}$, this is
\[ \sum_{\mathbf{r} \in \Z^d} \widehat{\psi^+_{\eps}}(\mathbf{r})\sum_{m \in I(n_+)} e(q^4 \mathbf{r} \cdot \theta m^4 + \ldots),\]
where the dots denote terms of degree at most $2$ in $m$ (which can depend on $\mathbf{r}, n_+, \theta, z,q$). 
The contribution from $\mathbf{r} = 0$ is $|I(n_+)| (\int \psi^+_{\eps})$ which, by \eqref{I-length-bd} and property (1) of Lemma \ref{lem-a2}, is $\ll (2\eps)^{d+1} q^{-1} N^{1/4}$.
By Corollary \ref{cor-weyl-1} (and since $|I(n_+)| \leq N^{1/4}$), we have 
\[ \big|\sum_{m \in I(n_+)} e(q^4\mathbf{r} \cdot \theta m^4 + \ldots)\big| \leq N^{1/4}\big( \frac{q^4 \Vert \mathbf{r} \Vert_1}{\Omega(q,d,1/\eps)}\big)^{1/C_4}.\]

By Lemma \ref{lem-a2} (2), the contribution from $\mathbf{r} \neq 0$ is therefore
\begin{align*} & \ll N^{1/4} \big(\frac{q^4}{\Omega(q,d,1/\eps)}\big)^{1/C_4} \sum_{\mathbf{r} \in \Z^d \setminus \{0\}} |\widehat{\psi^+_{\eps}}(\mathbf{r})| \Vert \mathbf{r} \Vert_1 \\ & \ll_{\eps, d} N^{1/4} \big(\frac{q^4}{\Omega(q,d,1/\eps)}\big)^{1/C_4},\end{align*}
which is also $\ll (2\eps)^{d+1} q^{-1} N^{1/4}$ if $\Omega$ is chosen appropriately.
\end{proof}

Define progressions $P_+, P_-$ by
\begin{equation}\label{pdef} P_{\pm} := \{ n \in N : n \equiv \pm a \md{q}, (x - \eps)^{1/2} N^{1/2} \leq n \leq (x + \eps)^{1/2} N^{1/2}\},\end{equation} and recall from the statement of Proposition \ref{sec-4-main} the definition of $Q$, viz. 
\[ Q:= \P([(2x)^{1/4} - \frac{\eps}{100}, (2x)^{1/4} + \frac{\eps}{100}]; N^{1/4}, q).\]
Observe that if $t \in Q$ then $t^2$ is a sum $p_+ + p_{-}$ in $\gg \eps q^{-1} N^{1/2}$ ways. Indeed
\[ \big( (2x)^{1/2} - \frac{\eps}{10} \big) N^{1/2} < t^2 < \big( (2x)^{1/2} + \frac{\eps}{10} \big) N^{1/2} \] and $t^2 \equiv 0 \md{q}$, hence for any of the $\gg \eps q^{-1} N^{1/2}$ values of $p_+$ with $(x^{1/2} - \frac{\eps}{10}) N^{1/2} < p_+ < (x^{1/2} + \frac{\eps}{10}) N^{1/2}$ and $p_+ \equiv a \md{q}$ we have $t^2 - p_+ \in P_-$.

Note that from \eqref{z-def} and \eqref{pdef} we have
\begin{equation}\label{zp} Z_{\pm} = \{ n \in P_{\pm} : \Vert \theta n^2 - z \Vert_{\T^d} \leq \eps\}.  \end{equation}
This suggests the intuition behind the arguments that follow, which is that $Z_{\pm}$ behaves like a ``pseudorandom'' subset of $P_{\pm}$ of density $(2\eps)^{d}$. Thus it is reasonable to expect that a typical $t^2$, $t \in Q$, will have $\gg (2\eps)^{2d + 1} q^{-1} N^{1/2}$ representations as $z_+ + z_-$ with $z_+ \in Z_+$, $z_- \in Z_-$. 
\begin{lemma}\label{lem5.3}
Suppose that $\Omega$ grows sufficiently rapidly. Write $r(n)$ for the number of representations of $n$ as $z_+ + z_-$ with $z_{\pm} \in Z_{\pm}$.  Suppose that $\Omega$ grows fast enough. Then all but at most $\eta \eps q^{-1}N^{1/4}$ of elements $t \in Q$ have $r(t^2) \gg  (2\eps)^{2d+1} q^{-1}N^{1/2}$.
\end{lemma}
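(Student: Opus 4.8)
\emph{Proposed proof of Lemma \ref{lem5.3}.} The plan is to lower-bound $r(t^2)$ for each individual $t\in Q$ by a Fourier expansion, isolate a main term of the right size, dispose of most error terms uniformly, and handle the one genuinely resonant family of error terms by averaging over $t\in Q$. To begin, recall that $Z_{\pm}=\{n\in P_{\pm}:\Vert\theta n^2-z\Vert_{\T^d}\le\eps\}$. Fix a smooth minorant $\psi^-_{\eps}$ for the ball $B_{\eps}(0)\subset\T^d$ as in Appendix \ref{bump-appendix}, with $0\le\psi^-_{\eps}\le 1_{B_{\eps}(0)}$, $\int\psi^-_{\eps}\gg(2\eps)^d$, $\sum_{\mathbf r}|\widehat{\psi^-_{\eps}}(\mathbf r)|\,\Vert\mathbf r\Vert_1=O_{\eps,d}(1)$. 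Writing $z_-=t^2-z_+$ and letting $J(t)$ be the interval of those $z_+$ with $z_+\in P_+$ and $t^2-z_+\in P_-$, the observation made just before the statement gives $|J(t)\cap(a+q\Z)|\gg\eps q^{-1}N^{1/2}$ for every $t\in Q$, and
\[
r(t^2)\ \ge\ \sum_{z_+\in J(t)\cap(a+q\Z)}\psi^-_{\eps}(\theta z_+^2-z)\,\psi^-_{\eps}(\theta(t^2-z_+)^2-z).
\]
Expanding both copies of $\psi^-_{\eps}$ in Fourier series and expanding the congruence $z_+\equiv a\md q$ into additive characters, this becomes a sum over $(\mathbf r_1,\mathbf r_2)\in(\Z^d)^2$ and over $s\md q$ of $\widehat{\psi^-_{\eps}}(\mathbf r_1)\widehat{\psi^-_{\eps}}(\mathbf r_2)$ times exponential sums $\sum_{z_+\in J(t)}e\big(\mathbf r_1\cdot\theta z_+^2+\mathbf r_2\cdot\theta(t^2-z_+)^2+\tfrac{s}{q}z_+\big)$, whose $z_+$-polynomial has degree $\le 2$ with leading coefficient $(\mathbf r_1+\mathbf r_2)\cdot\theta$.

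The term $\mathbf r_1=\mathbf r_2=0$ contributes $(\int\psi^-_{\eps})^2\,|J(t)\cap(a+q\Z)|\gg(2\eps)^{2d+1}q^{-1}N^{1/2}$ for every $t\in Q$; this is the main term. For $\mathbf r_1+\mathbf r_2\neq 0$ the $z_+$-polynomial is genuinely quadratic, and since $|J(t)|\ll\eps q^{-1}N^{1/2}\le N^{1/2}$ Corollary \ref{cor-weyl-1} (with $k=2$) bounds each such inner sum by $\ll\Vert\mathbf r_1+\mathbf r_2\Vert_1\,N^{1/2}\Omega(q,d,1/\eps)^{-1/C_2}$, uniformly in $t$ and in $s$. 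Summing against $|\widehat{\psi^-_{\eps}}(\mathbf r_1)||\widehat{\psi^-_{\eps}}(\mathbf r_2)|$, using subadditivity of $x\mapsto x^{1/C_2}$ and the decay of $\widehat{\psi^-_{\eps}}$, the total contribution of all $\mathbf r_1+\mathbf r_2\neq 0$ terms is $\ll_{\eps,d}N^{1/2}\Omega^{-1/C_2}$, which — for $\Omega$ growing fast enough in $q,d,1/\eps$ — is at most $\tfrac1{100}(2\eps)^{2d+1}q^{-1}N^{1/2}$, again uniformly in $t\in Q$.

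The remaining terms are the resonant ones, $\mathbf r_2=-\mathbf r_1=-\mathbf r$ with $\mathbf r\neq 0$: here the quadratic part of the $z_+$-phase cancels and one is left with $\sum_{\mathbf r\neq 0}|\widehat{\psi^-_{\eps}}(\mathbf r)|^2\,e(-\mathbf r\cdot\theta\,t^4)\sum_{z_+\in J(t)\cap(a+q\Z)}e\big(2(\mathbf r\cdot\theta)t^2 z_+\big)$, a geometric sum which for a single $t$ is only small when $\Vert 2q(\mathbf r\cdot\theta)t^2\Vert$ is not too small. I do not expect these to be controllable pointwise, so the plan is to control them after summing over $t\in Q$. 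The key device is to keep the phase $e(-\mathbf r\cdot\theta\,t^4)$ and sum over $t=q\tau\in Q$ before taking absolute values: for fixed $z_+$ and fixed resonant $\mathbf r$ the resulting sum over $\tau$ equals $\sum_{\tau}e\big(-(q^4\mathbf r)\cdot\theta\,\tau^4+2(q^2\mathbf r)\cdot\theta\,z_+\tau^2+\dots\big)$, a \emph{degree-four} exponential sum in $\tau$ with nonzero leading coefficient $-(q^4\mathbf r)\cdot\theta$, over an interval of length $\le|Q|/q\le N^{1/4}$. This is precisely the regime where Corollary \ref{cor-weyl-1} with $k=4$ applies, giving a bound $\ll q^4\Vert\mathbf r\Vert_1\,N^{1/4}\Omega^{-1/C_4}$; summing over $z_+$ (there are $\ll\eps q^{-1}N^{1/2}$ of them) and over $\mathbf r\neq 0$ against $|\widehat{\psi^-_{\eps}}(\mathbf r)|^2$ shows that the total resonant contribution, summed over $t\in Q$, is $\ll_{\eps,d}\eps q^{-1}N^{3/4}\Omega^{-1/C_4}$, which for $\Omega$ rapidly growing is $\le\tfrac{\eta}{100}\big((2\eps)^{2d+1}q^{-1}N^{1/2}\big)\big(\eps q^{-1}N^{1/4}\big)$. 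Feeding this, together with the uniform bounds from the previous paragraph, into a Markov/Chebyshev argument then shows that all but $O(\eta\eps q^{-1}N^{1/4})$ of $t\in Q$ have $r(t^2)\gg(2\eps)^{2d+1}q^{-1}N^{1/2}$.

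The step I expect to be the main obstacle is exactly this resonant family. The naive treatment — pulling absolute values inside the $z_+$-sum before summing over $t$ — collapses each resonant sum to a geometric sum and leaves one needing cancellation in a \emph{quadratic} exponential sum over an interval of length only $\asymp N^{1/4}$, and $(\Omega,N)$-irrationality does not provide this (the relevant Weyl/Corollary \ref{cor-weyl-1} bound at the ambient scale $N$ is then weaker than trivial). So the organization of the absolute values — equivalently, of the first/second moment bookkeeping — must be arranged so that the degree-four structure coming from $(t^2-z_+)^2$ is actually exposed and one may legitimately invoke Corollary \ref{cor-weyl-1} with $k=4$ over the short progression $Q$; this is also where the exponent $\tfrac14$ in the definition of $Q$ is essential. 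A secondary technical point is that, to convert the main-term lower bound into an "all but $\eta$" statement rather than merely a positive-proportion statement, one wants the minorant $\psi^-_{\eps}$ (and a matching majorant used for an a priori upper bound on $r(t^2)$) chosen quantitatively sharply, so that the main term and the trivial upper bound agree up to a factor $1+O(\eta)$.
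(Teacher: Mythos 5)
Your main-term computation and your treatment of the non-resonant frequencies $\mathbf r_1+\mathbf r_2\neq 0$ are fine and parallel what one would do, but there is a genuine gap at the very step you flag as the obstacle: the resonant terms. Your device is to keep the phase $e(-\mathbf r\cdot\theta\,t^4)$ and bound the \emph{signed} sum $\sum_{t\in Q}\mathrm{Res}(t)$ via a degree-four Weyl sum over the interval $Q/q$. But a bound on $\bigl|\sum_{t\in Q}\mathrm{Res}(t)\bigr|$ does not feed into a Markov argument: the pointwise trivial bound on $\mathrm{Res}(t)$ is $\ll_{\eps,d}N^{1/2}$, far larger than the main term, so a positive proportion of $t\in Q$ could have $\mathrm{Res}(t)\approx -\mathrm{Main}$ compensated by others with $\mathrm{Res}(t)\approx+\mathrm{Main}$, leaving the signed sum tiny while destroying the lower bound on $r(t^2)$ for many $t$. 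To run Markov you would need $\sum_t|\mathrm{Res}(t)|$, which forces absolute values that kill the $t^4$ phase and leave exactly the hopeless quadratic-over-$N^{1/4}$ situation you describe; and the Chebyshev alternative $\sum_t|\mathrm{Res}(t)|^2$ fails on its diagonal for the same reason (the surviving phase $e(2\mathbf r\cdot\theta\,t^2(z_+-z_+'))$ is quadratic in $t$ over an interval of length $\asymp\eps q^{-1}N^{1/4}$, where Corollary \ref{cor-weyl-1} with the ambient $(\Omega,N)$-irrationality is weaker than the trivial bound, and the coefficient $2q^2(z_+-z_+')\mathbf r$ has $\ell^1$-norm up to $N^{1/2}$, outside the irrationality range). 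So the resonant family is not actually disposed of.

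The paper's proof is organised precisely to avoid this. It argues by contradiction with a putative bad set $T\subset Q$ of size $\geq\eta\eps q^{-1}N^{1/4}$, replaces $1_{Z_\pm}$ by the minorants $f_\pm=1_{P_\pm}\psi^-_\eps(\theta\,\cdot^2-z)$, splits $f_\pm=1_{P_\pm}\int\psi^-_\eps+g_\pm$, and bounds each error term $\sum_{t\in T}g_\pm\ast h_\mp(t^2)=\int_0^1\widehat{g_\pm}\,\widehat{h_\mp}\,\widehat{1_{T^2}}$ by H\"older as $\Vert\widehat{g_\pm}\Vert_\infty^{1/3}(\int|\widehat{g_\pm}|^2)^{1/3}(\int|\widehat{h_\mp}|^2)^{1/2}(\int|\widehat{1}_{T^2}|^6)^{1/6}$. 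The quadratic Weyl bound enters only through $\Vert\widehat{g_\pm}\Vert_\infty\ll N^{1/2}\Omega^{-1/C_2}$ (a sum over $P_\pm$, of length $\asymp N^{1/2}$, where it is effective), while the arbitrariness of the bad set $T$ is absorbed by the $\ell^6$ restriction estimate for sets of squares, Proposition \ref{ell-6}, applied to $T^2$. That restriction estimate is the ingredient your argument is missing, and without it (or some substitute playing the same role for an arbitrary exceptional subset of $Q$) the proposal does not close.
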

\begin{proof}
If the lemma is false then for any absolute constant $c$ (which we may specify later) there is a set $T \subset Q$, $|T| \geq \eta \eps q^{-1} N^{1/4}$, such that 
\begin{equation}\label{to-contradict} \sum_{t \in T} r(t^2) \leq c (2\eps)^{2d+1} q^{-1} |T|N^{1/2} .\end{equation}
We first introduce a smoothed variant of $r$, defined by
\[ \tilde r(n) = f_+ \ast f_- (n),\]
where 
\[ f_{\pm}(n) = 1_{P_{\pm}}(n) \psi^-_{\eps} (\theta n^2 - z),\] where $\psi^-_{\eps}$ is a suitable minorant to $B_{\eps}(0)$, as constructed in Lemma \ref{lem-a2}. From \eqref{zp} we see that $1_{Z_{\pm}} \geq f_{\pm}$ pointwise, and so
\[ r(n) \geq \tilde r(n)\] pointwise. 
Define
\[ g_{\pm}(n) = 1_{P_{\pm}}(n) (\psi^-_{\eps}(\theta n^2 - z) - \int \psi^-_{\eps}).\]
Fourier expanding $\psi^-_{\eps}$, we see that 
\[ \widehat{g_{\pm}}(t) = \sum_{\mathbf{r} \in \Z^d \setminus \{0\}} \widehat{\psi^-_{\eps}}(\mathbf{r}) \sum_{n \in P_+} e(\mathbf{r} \cdot \theta n^2 + nt - \mathbf{r} \cdot z).\]
Parametrising $n \in P_+$ as $n = qm + b$ for $m$ in some interval $I$ with $|I| = |P_+| < N^{1/2}$, it follows from Corollary \ref{cor-weyl-1} that the inner sum is $\ll N^{1/2}\Omega(q,d,1/\eps)^{-1/C_2} \Vert \mathbf{r} \Vert_1$. Therefore, by property (2) of Lemma \ref{lem-a2}, we have

\begin{align}\nonumber \Vert \widehat{g_{\pm}} \Vert_{\infty} & \ll  N^{1/2} \Omega(q,d,1/\eps)^{-1/C_2}\sum_{\mathbf{r} \in \Z^d} |\widehat{\psi^-_{\eps}}(\mathbf{r})| \Vert \mathbf{r} \Vert_1 \\ & \ll_{\eps, d} N^{1/2} \Omega(q,d,1/\eps)^{-1/C_2}.\label{g-unf}\end{align}
Now, writing
\[ f_{\pm} = 1_{P_{\pm}} \int \psi^-_{\eps} + g_{\pm},\] we may expand $\sum_{t \in T} \tilde r(t^2)$ as a sum of four terms. The ``main term'' is
\[ E_{\main} =  (\int \psi^-_{\eps})^2 \sum_{t \in T} 1_{P_+} \ast 1_{P_-}(t^2).\]
The three error terms each have the shape
\[ E_{\error}= \sum_{t \in T} g_{\pm} \ast h_{\mp}(t^2),\] where $h_{\mp}$ is bounded pointwise by 1 and supported on $P_{\mp}$. 

We have already remarked that if $t \in Q$ then $t^2$ has $\gg \eps q^{-1} N^{1/2}$ representations as $p_+ + p_-$, and therefore
\begin{equation}\label{e-main} E_{\main} \gg (2\eps)^{2d} \cdot |T| \cdot \eps q^{-1} N^{1/2} \gg \eta (2\eps)^{2d+2} q^{-2} N^{3/4} .\end{equation}
On the other hand
\[ E_{\error} = \int^1_0 \widehat{g_{\pm}}(\theta) \widehat{h_{\mp}}(\theta) \widehat{1_{T^2}}(\theta) d\theta,\] where here $T^2 := \{t^2 : t \in T\}$.
Using the same application of H\"older's inequality as in \eqref{holdered}, 
\[  E_{\error} \ll \Vert \widehat{g_{\pm}} \Vert_{\infty}^{1/3} \big(\int^1_0 |\widehat{g_{\pm}}|^2\big)^{1/3} \big(\int^1_0 |\widehat{h_{\mp}}|^2\big)^{1/2} \big(\int^1_0 |\widehat{1}_{T^2}|^6 \big)^{1/6}.\] 
By Parseval and the crude bound $|P_{\pm}| \ll  N^{1/2}$ we have
\[ \int^1_0 |\widehat{g_{\pm}}|^2,  \int^1_0 |\widehat{h_{\mp}}|^2 \ll N^{1/2}.\]
Proposition \ref{ell-6} tells us that 
\[ \int^1_0 |\widehat{1}_{T^2}|^6 \ll N .\] 
Putting this together with \eqref{g-unf} gives
\[ E_{\error} \ll  \Omega(q,d,1/\eps)^{-1/3C_2} N^{3/4}.\] Choosing $\Omega$ to grow sufficiently quickly, we see from \eqref{e-main} that this can be made less than $\frac{1}{10}$ of $E_{\main}$. It follows from \eqref{e-main} that 
\[ \sum_{t \in T} \tilde r(t^2) \geq E_{\main} - 3 E_{\error} > \frac{1}{2} E_{\main} \gg (2\eps)^{2d+1} q^{-1} |T|N^{1/2},\] contrary to \eqref{to-contradict} if $c$ was chosen small enough.
\end{proof}

Finally we put Lemmas \ref{lem5.2} and \ref{lem5.3} together to establish Proposition \ref{sec-4-main}. It is certainly enough (in view of the definitions of $\tilde Z_{\pm}$) to show that $\tilde Z_+ + \tilde Z_-$ contains $t^2$ for all but at most $O(\eta \eps q^{-1} N^{1/4})$ of the elements $t \in Q$. By Lemma \ref{lem5.3}, all but at most $\eta \eps q^{-1} N^{1/4}$ elements $t \in Q$ are such that $t^2$ is \emph{well-represented} in $Z_+ + Z_-$, by which we mean that $r(t^2) \gg (2\eps)^{2d+1} q^{-1}N^{1/2}$, where $r(t^2)$ is the number of representations of $t^2$ as $z_+ + z_-$. Suppose now that we pass from $Z_{\pm}$ to $\tilde Z_{\pm}$. The number of pairs $(z_+, z_-)$ with $z_+ + z_-$ the square of an element in $Q$ that are lost in this way is, by Lemma \ref{lem5.2}, bounded above by $\ll |Z_{\pm} \setminus \tilde Z_{\pm}| (2\eps)^{d+1} q^{-1}N^{1/4}$. By \eqref{spill}, this is bounded by $\ll \eta (2\eps)^{2d+2} q^{-2} N^{3/4}$. The number of $t$ for which $t^2$ is well-represented but does not lie in $\tilde Z_+ + \tilde Z_-$  is therefore bounded above by
\[ \ll \frac{\eta (2\eps)^{2d+2} q^{-2} N^{3/4}}{(2\eps)^{2d+1} q^{-1} N^{1/2}} = O(\eta \eps q^{-1}N^{1/4}).  \]
This completes the proof of Proposition \ref{sec-4-main}.

\section{Gaps between sums of two squares}\label{sec6}

In this section we prove a result, Proposition \ref{gaps-result}, that we will need in the next section. It seems possible that such a result appears in the literature already, but we do not know a reference. We prove a slightly more general result than we actually need since this is plausibly of independent interest.

\begin{proposition}\label{gaps-result}
Let $\alpha_1, \beta_1, \gamma_1, \alpha_2, \beta_2, \gamma_2$ be nonnegative reals with $\alpha_1 < \beta_1$, $\alpha_2 < \beta_2$, $\alpha_1^2 + \alpha_2^2 < \gamma_1 < \gamma_2 < \beta_1^2 + \beta_2^2$. Let $q \in \N$ and set $P_i := \P([\alpha_i, \beta_i]; N, q)$ for $i = 1,2$. Suppose that $\gamma_1 \leq n/N^2 \leq \gamma_2$.
Then there are $n_1 \in P_1$, $n_2 \in P_2$ such that 
\[ |n^2_1 + n^2_2 - n| \ll \sqrt{N}.\] The implied constant may depend on $\alpha_i, \beta_i, \gamma_i, q$ but is independent of $n$ and $N$.
\end{proposition}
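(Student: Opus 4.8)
The plan is to pass to a skew coordinate system adapted to the circle $x^2+y^2=n$ near the box $[\alpha_1 N,\beta_1 N]\times[\alpha_2 N,\beta_2 N]$, thereby reducing the two-dimensional problem to a one-dimensional ``nearest square'' estimate with plenty of room to spare. (Implied constants below are permitted to depend on $q$ and on $\alpha_i,\beta_i,\gamma_i$, and we may assume $N$ is larger than a constant depending on these; for bounded $N$ there is nothing to prove after enlarging constants appropriately.)

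The one geometric input I would isolate first is this: for every $n$ with $\gamma_1\le n/N^2\le\gamma_2$ there is a pair of coprime positive integers $(e_1,e_2)$, bounded in terms of $\alpha_i,\beta_i,\gamma_i$ only, such that the point $T:=\tfrac{\sqrt n}{w}(e_2,e_1)$, where $w:=\sqrt{e_1^2+e_2^2}$ --- i.e. the point of the circle $x^2+y^2=n$ at which the tangent line has direction $(e_1,-e_2)$ --- lies in the \emph{open} box at distance $\gg N$ from each of its four sides. Rescaling by $N$ and writing $\gamma=n/N^2$, this is a statement about the arc $\{x^2+y^2=\gamma:\ \alpha_1\le x\le\beta_1,\ \alpha_2\le y\le\beta_2\}$, and the hypotheses $\alpha_1^2+\alpha_2^2<\gamma_1\le\gamma\le\gamma_2<\beta_1^2+\beta_2^2$ together with $\alpha_i<\beta_i$ are exactly what force this arc (indeed its portion lying in the \emph{open} box) to be nonempty and to subtend an angle at the origin bounded below uniformly for $\gamma\in[\gamma_1,\gamma_2]$: using the explicit parametrisation $x\in[\max(\alpha_1,\sqrt{(\gamma-\beta_2^2)_+}),\,\min(\beta_1,\sqrt{\gamma-\alpha_2^2})]$ one checks the $x$-extent, hence the angular width, is $\gg 1$. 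One then takes the direction $(e_2:e_1)$ to have angular direction in the middle third of this arc, so that (again by the explicit parametrisation and the curving of the circle) $T$ has a definite margin from $\partial(\text{box})$; since that middle third has length $\gg 1$ and stays a fixed distance from both coordinate-axis directions, it contains a rational $e_1/e_2$ of bounded height. The degenerate cases where some $\alpha_i=0$ (box meeting an axis) are the only ones needing care, and are handled by the same explicit estimates together with a compactness argument in the variable $\gamma$.

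With $(e_1,e_2)$ in hand, introduce $u:=e_2 n_1+e_1 n_2$ and $v:=e_1 n_1-e_2 n_2$, so that $u^2+v^2=w^2(n_1^2+n_2^2)$; it therefore suffices to find $(n_1,n_2)\in P_1\times P_2$ with $|u^2+v^2-w^2 n|\ll\sqrt N$. Since $\gcd(e_1,e_2)=1$, as $(n_1,n_2)$ runs over $(q\Z)^2$ the value $u$ runs over all of $q\Z$, and for each fixed admissible value of $u$ the corresponding values of $v$ form an arithmetic progression of common difference $qw^2$, with $n_1=(e_2 u+e_1 v)/w^2$ and $n_2=(e_1 u-e_2 v)/w^2$. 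Now let $u_0$ be the largest multiple of $q$ with $u_0^2\le w^2 n$; then $0\le R:=w^2 n-u_0^2\le 2qw\sqrt n+q^2\ll N$ and $u_0=w\sqrt n+O(1)$. Let $v$ be the point of the progression attached to $u_0$ that is nearest to $\sqrt R$, so $|v-\sqrt R|\le\tfrac12 qw^2\ll 1$. Then
\[ |u_0^2+v^2-w^2 n|=|v^2-R|=|v-\sqrt R|\cdot|v+\sqrt R|\ll \sqrt R+1\ll\sqrt N,\]
so $|n_1^2+n_2^2-n|=w^{-2}|u_0^2+v^2-w^2 n|\ll\sqrt N$, as required. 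Finally, since $u_0=w\sqrt n+O(1)$ and $v=\sqrt R+O(1)=O(\sqrt N)$, the displayed formulae give $n_1=e_2\sqrt n/w+O(\sqrt N)$ and $n_2=e_1\sqrt n/w+O(\sqrt N)$, i.e. $(n_1,n_2)=T+O(\sqrt N)$; as $T$ lies $\gg N$ inside the box, this puts $(n_1,n_2)$ in the box once $N$ is large, whence $n_1\in P_1$ and $n_2\in P_2$, and the proof is complete.

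The argument is elementary throughout, so there is no single deep step. The two points that need attention are (i) the uniformity in $n$ of the geometric lemma, especially when an $\alpha_i$ vanishes, and (ii) the verification in the last step that the $O(\sqrt N)$ displacement really is dominated by the $\gg N$ margin of $T$ --- this is precisely where the \emph{boundedness} of $e_1,e_2$ (hence of $w$, hence of the common difference $qw^2$ of the $v$-progression) is used, since that boundedness is exactly what turns the $O(1)$ approximation of $\sqrt R$ into an $O(\sqrt N)$ error rather than something larger.
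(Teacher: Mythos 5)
Your argument is correct. Its first half --- selecting a coprime pair $(e_1,e_2)$ of bounded height whose direction lies in the middle portion of the arc of $x^2+y^2=n$ inside the box $P_1\times P_2$, with uniformity in $\gamma=n/N^2$ obtained by compactness --- is essentially the same as the paper's, which works with the interval $I_\gamma$ of slopes $t_1/t_2$ realised on that arc and picks a bounded rational $a/b$ in its middle half. Where you genuinely diverge is in how the lattice points along the tangent line are then exploited. The paper fixes a base point $(q\lfloor t_1N/q\rfloor, q\lfloor t_2N/q\rfloor)$, steps along the tangent direction via $n_i(k)$, and applies a discrete intermediate value theorem to the quadratic $k\mapsto n_1(k)^2+n_2(k)^2$, whose increments are $O(k)=O(\sqrt N)$. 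You instead pass to the skew coordinates $(u,v)=(e_2n_1+e_1n_2,\,e_1n_1-e_2n_2)$, in which the paper's one-parameter family is precisely ``$u$ fixed, $v$ running over a progression of common difference $qw^2$'', and then run the classical greedy two-step argument: largest admissible $u_0$ with $u_0^2\le w^2n$, then the admissible $v$ nearest to $\sqrt{R}$. This is a nice twist: the paper's introductory remark dismisses the greedy argument as non-adaptable because it forces $n_2$ to be tiny, and your rotation shows that the objection disappears once the first coordinate is taken along the radius at an interior point of the box rather than along an axis. The two routes are of comparable length and give the same bound; yours carries the small extra burden of verifying that $u$ ranges over all of $q\Z$ and that each $v$-fibre is a full progression of difference $qw^2$ (both correctly justified via $\gcd(e_1,e_2)=1$), while the paper's requires the explicit expansion of $n_1(k)^2+n_2(k)^2$ and the discrete intermediate value step.
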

\emph{Remark.} A well-studied case is that in which $P_1 = P_2 = \{1,\dots, N\}$. Then it is well-known that there is a sum of two squares $n_1^2 + n_2^2$ within $O(N^{1/2})$ of any $n \leq N^2$. One argument to prove this is very simple: take $n_1 = \lfloor \sqrt{n} \rfloor$, noting that $|n - n_1^2| \ll N$, and then set $n_2 := \lfloor \sqrt{n - n_1^2}\rfloor$. No bound of the form $o(N^{1/2})$ is known, a problem Montgomery \cite[Problem 64, p. 208]{montgomery} attributes to Littlewood. The argument just sketched does not adapt to our case since the $n_2$ produced is necessarily very small. However, there is another type of argument giving a similar bound and allowing us to take $n_1 \approx n_2$. The idea here is to take $n_1(k) = \lfloor \sqrt{n/2} \rfloor + k$, $n_2(k) = \lfloor \sqrt{n/2} \rfloor - k$, where $k \in Z$ is to be specified later.  Observe that 
\[ n_1(k)^2 + n_2(k)^2 = 2 \lfloor \sqrt{n/2} \rfloor^2 + 2k^2,\] and so in particular
\[ n_1(0)^2 + n_2(0)^2 \leq n,\]
\[ n_1(k)^2 + n_2(k)^2 \geq n - 2\sqrt{n} + 2k^2 > n\] for $k = \lceil \sqrt{n} \rceil$ and
\[ (n_1(k+1)^2 + n_2(k+1)^2) - (n_1(k)^2 - n_2(k))^2 = 4k + 2 \ll \sqrt{n}\] uniformly for $k \leq \lceil \sqrt{n}\rceil$. It follows from the ``discrete intermediate value theorem'' that there is some $k$ for which $|n_1(k)^2 + n_2(k)^2 - n| \ll \sqrt{n}$.

It turns out that this argument \emph{does} generalise to allow us to prove Proposition \ref{gaps-result}.

\begin{proof} For the duration of this proof, the implied constant in the $O()$ and $\ll, \gg$ notations may depend on $\alpha_i, \beta_i, \gamma_i, q$. We may clearly assume that $N$ is sufficiently large. 

For each $\gamma \in [\gamma_1, \gamma_2]$, define $I_{\gamma}$ to be the set of all $\lambda \in \R$ for which there exist $t_1, t_2 \in \R$ with $\alpha_1 \leq t_1 \leq \alpha_2$, $\beta_1 \leq t_2 \leq \beta_2$, $t_1/t_2 = \lambda$ and $t_1^2 + t_2^2 = \gamma$. Let $\tilde I_{\gamma}$ be the middle half of $I_{\gamma}$. It is easy to see that $I_{\gamma}$ is a closed interval whose length is positive and varies continuously as a function of $\gamma$, and is therefore bounded below uniformly in $\gamma$. The same is true for $\tilde I_{\gamma}$. This implies that 
\begin{enumerate}
\item There is an absolute $\eps \gg 1$ such that if $\lambda \in \tilde I_{\gamma}$ then we may find $t_1, t_2$ with $t_1/t_2 = \lambda$ and
\begin{equation}\label{space} \alpha_i + \eps \leq t_i \leq \beta_i - \eps;\end{equation}
\item $\tilde I_{\gamma}$ contains a rational $a(\gamma)/b(\gamma)$ with $a(\gamma), b(\gamma) = O(1)$ and neither $a(\gamma)$ nor $b(\gamma)$ zero.
\end{enumerate}
Now suppose that $n$ is given satisfying $\gamma_1 \leq n/N^2 \leq \gamma_2$. Set $\gamma := n/N^2$, and select rationals $a = a(\gamma)$, $b = b(\gamma)$, not both zero, as in (2) above. According to (1), there are $t_1, t_2$ with $t_1^2 + t_2^2 = \gamma$, $t_1/t_2 = a/b$ and such that \eqref{space} is satisfied.

Now set 
\[ n_1(k)  :=  q\lfloor \frac{t_1 N}{q} \rfloor + qkb, n_2(k) := q\lfloor \frac{t_2 N}{q}\rfloor - qka.\] 
Evidently $q | n_1(k) ,n_2(k)$. Moreover from \eqref{space} it follows that $\alpha_i \leq n_i(k)/N \leq \beta_i$ provided $|k| \leq cN$ for suitably small $c \gg 1$. Therefore for $k$ in this range we have $n_i(k) \in P_i$. 
Observe that 
\[ n_1(0)^2 + n_2(0)^2 \leq (t_1^2 + t_2^2)N^2 = n.\]
Also
\begin{align} \nonumber n_1(k)^2 & + n_2(k)^2 \\ & = \label{form} q^2 \big( \lfloor \frac{t_1 N}{q}\rfloor^2 + \lfloor \frac{t_2 N}{q}\rfloor^2 + 2k(a \{ \frac{t_2 N}{q}\} - b \{ \frac{t_1 N}{q}\}) + k^2(a^2 + b^2)\big) \\ & \geq n - O(N) - O(k) + q^2k^2(a^2 + b^2),\nonumber \end{align} and in particular
\[ n_1(k)^2 + n_2(k)^2 > n\] for some $k = O(\sqrt{N})$.

Moreover, from \eqref{form} again we have \[ 
\big| (n_1(k+1)^2 + n_2(k+1)^2) - (n_1(k)^2 - n_2(k))^2 \big|  = O(k).
\]
It follows from these properties and a discrete intermediate value argument that there is some $k = O(\sqrt{N})$ for which $|n_1(k)^2 + n_2(k)^2 - n| \ll \sqrt{N}$. The result follows.
\end{proof}

\section{Proof of the main theorem}\label{sec7}

In Proposition \ref{sec6-main} below we will synthesise the main results of Sections \ref{sec4} and \ref{sec5}, together with the following small (and well-known) lemma.

\begin{lemma}\label{simple}
Let $Q \subset \N$ be a finite arithmetic progression of size at least $100$, and suppose that $S \subset Q$ is a set of size at least $\frac{9}{10} |Q|$. Then $S + S$ contains a subprogression of $Q + Q$ of size at least $|Q|$ with the same common difference as $Q$.
\end{lemma}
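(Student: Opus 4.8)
The plan is to work with the arithmetic progression $Q = \{a, a+m, a+2m, \dots, a+(L-1)m\}$ of length $L = |Q| \geq 100$ and common difference $m$, and to show that the "hole set" $Q \setminus S$, which has size at most $L/10$, cannot obstruct too many of the natural sums. First I would observe that $Q + Q$ is itself an arithmetic progression $\{2a, 2a+m, \dots, 2a + 2(L-1)m\}$ of length $2L-1$ and common difference $m$. For each index $k$ with $0 \leq k \leq 2(L-1)$, the element $2a + km$ of $Q+Q$ has representations as $(a+im) + (a+jm)$ with $i + j = k$, $0 \leq i, j \leq L-1$; the number of such ordered pairs is roughly $\min(k, 2L-2-k, L)$. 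If I can show that for every $k$ in a subinterval of length $\geq L$ there is at least one representation $i + j = k$ with both $a+im \in S$ and $a+jm \in S$, then $2a+km \in S+S$ for all those $k$, giving the desired subprogression of $Q+Q$ of length $\geq L$ and common difference $m$.

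The key step is a counting/pigeonhole argument restricted to the "central" indices. Let $H = Q \setminus S$, so $|H| \leq L/10$. Consider the range of indices $k$ with, say, $\lceil L/4 \rceil \le k \le \lceil L/4\rceil + L - 1$ (an interval of $L$ consecutive integers, which I should check lies inside $[0, 2L-2]$ — it does, since the right endpoint is at most $\lceil L/4\rceil + L - 1 \le 2L - 2$ for $L \geq 2$). For such a $k$, the number of ordered pairs $(i,j)$ with $i+j=k$ and $0 \le i,j \le L-1$ is at least $\lceil L/4\rceil + 1 > L/4$ (when $k \le L-1$ it's $k+1$; when $k \ge L$ it's $2L-1-k \ge L - 1 - \lceil L/4 \rceil \geq L/4$ for $L$ large — I'd just pick the window symmetric about $L-1$, namely $k$ ranging over an interval of length $L$ centered near $L-1$, to guarantee at least $\approx L/2$ representations, which is cleaner). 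For a fixed such $k$, a representation $(i,j)$ is "bad" only if $a+im \in H$ or $a + jm \in H$; since each fixed value of $i$ kills at most one pair and each fixed value of $j$ kills at most one pair, the number of bad pairs is at most $2|H| \leq L/5$. Since the number of representations of $k$ exceeds $L/5$ (this is where I choose the window and use $L \geq 100$: I want $\#\text{reps} > 2|H|$), at least one good pair survives, so $2a + km \in S+S$.

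So the heart of the matter is just a bookkeeping inequality: over a window of $L$ consecutive central indices $k$, the number of representations $i+j=k$ with $i,j \in \{0,\dots,L-1\}$ is at least $\lfloor L/2\rfloor + 1 > 2\lceil L/10 \rceil \geq 2|H|$ once $L \geq 100$. I would choose the window to be the $L$ indices $k$ with $\lfloor (L-1)/2\rfloor \le k \le \lfloor(L-1)/2\rfloor + L - 1$ so each has at least $\lfloor L/2 \rfloor + 1$ representations. The main (minor) obstacle is simply making the constants line up: ensuring the chosen window of length exactly $L$ sits inside $[0, 2L-2]$ and that every index in it has strictly more than $2|H|$ representations — all of which follows comfortably from $|Q| \geq 100$ and $|S| \geq \tfrac{9}{10}|Q|$, with plenty of room to spare. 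I would then conclude that $\{2a + km : k \text{ in the window}\}$ is a subprogression of $Q+Q$ of length $L = |Q|$, common difference $m$, contained in $S+S$, which is exactly the claim.
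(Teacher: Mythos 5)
Your argument is correct and is essentially the same as the paper's: both count the representations $i+j=k$ of a central element of $Q+Q$ and observe that, since the hole set $Q\setminus S$ has size at most $|Q|/10$, it cannot meet every representing pair (you phrase this as a union bound over ordered pairs, the paper as a contrapositive using disjoint unordered pairs, and the paper in fact shows the slightly larger middle segment of length about $\tfrac{8}{5}|Q|$ is covered). The constants line up comfortably, exactly as you indicate.
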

\begin{proof}
By translating we may assume that $Q = \{1,\dots, m\}$. Suppose that $x \leq m$. Then the pairs $\{j, x - j\}$, $1 \leq j < x/2$, are disjoint. If $S + S$ does not contain $x$, then $S$ cannot contain both elements of any such pair, and hence $|Q \setminus S| \leq \lfloor x/2\rfloor$. Therefore $\lfloor x/2 \rfloor \leq \frac{m}{10}$, and so $x \leq\frac{m}{5} + 2$. A similar argument holds for $x \geq m$, with the conclusion now being that $2m - x \leq \frac{m}{5} + 2$. Thus $S + S$ contains the progression $\frac{m}{5} +2 < x < 2m - \frac{m}{5} - 2$. This is more than $m$ elements if $m \geq 100$. 
\end{proof}

\begin{proposition}\label{sec6-main}
Let $\eta > 0$. Suppose that $A \subset [N,2N)$ is a set of size at least $N/2$. Then $\sqrt{2\sqrt{2\sqrt{2A}}}$ contains a progression $\P(I; N^{1/8}, q)$ for some interval $I \subset [0.1, 10]$ with $|I| \gg 1$ and for some $q = O(1)$.
\end{proposition}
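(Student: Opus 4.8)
The plan is to apply, in turn, the two main results of the preceding sections — Proposition \ref{sec-3-main} and Proposition \ref{sec-4-main} — and then finish with Lemma \ref{simple} together with an entirely elementary manipulation of square-roots of progressions. The parameter $\eta$ in the statement plays no role in the conclusion; we simply fix a sufficiently small absolute constant $\eta_0 > 0$ (for instance $\eta_0 = 10^{-10}$) and run everything with that value. Let $\Omega$ be the function provided by Proposition \ref{sec-4-main} for this $\eta_0$. First I would apply Proposition \ref{sec-3-main} to $A$ with $\eta = \eta_0$ and with this function (after replacing $N$ by a bounded multiple, if necessary, so that the resulting parameter $x$ lands in the range demanded by Proposition \ref{sec-4-main}; this only rescales $\eps$ and $N$ by bounded factors, which one absorbs by feeding in $\Omega'(a,b,c) := \Omega(a,b,2c)$ in place of $\Omega$). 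This produces $q, d = O(1)$, $\eps \gg 1$, a quadratic residue $b$ modulo $q$, some value of $x$, and $\theta, z \in \R^d$ with $\theta$ suitably irrational, such that $A + A$ contains all but at most $\eta_0 \mathfrak{S}(b,q)(2\eps)^{d+1} q^{-1} N^{1/2}$ of the squares in the Bohr set $Y = \{ n \in \N : n \equiv b \md{q},\ |\frac{n}{N} - x|,\ \Vert \theta n - z \Vert_{\T^d} \leq \eps\}$.

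Now set $Y' := \{ n \in Y : n \text{ is a perfect square and } n \in A + A \}$; by construction $Y'$ omits at most $\eta_0 (2\eps)^{d+1} q^{-1} \mathfrak{S}(b,q) N^{1/2}$ of the squares in $Y$, so Proposition \ref{sec-4-main} applies to $Y'$. It tells us that for all but $O(\eta_0 \eps q^{-1} N^{1/4})$ of the elements $t$ of the progression $Q := \P([(2x)^{1/4} - \frac{\eps}{100}, (2x)^{1/4} + \frac{\eps}{100}]; N^{1/4}, q)$ one has $t^2 \in \sqrt{Y'} + \sqrt{Y'}$. Since every element of $Y'$ is a square lying in $A + A$, we have $\sqrt{Y'} \subset \sqrt{A+A} = \sqrt{2A}$, hence $\sqrt{Y'} + \sqrt{Y'} \subset 2\sqrt{2A}$; and by definition of the square-root operation the statement ``$t^2 \in 2\sqrt{2A}$'' is the same as ``$t \in \sqrt{2\sqrt{2A}}$''. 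Thus $\sqrt{2\sqrt{2A}}$ contains a subset $Q' \subset Q$ with $|Q \setminus Q'| = O(\eta_0 \eps q^{-1} N^{1/4})$. Because $|Q| \gg \eps q^{-1} N^{1/4}$ with an absolute implied constant, and $N$ is large (so that $|Q| \geq 100$), taking $\eta_0$ small enough forces $|Q'| \geq \frac{9}{10}|Q|$.

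Apply Lemma \ref{simple} to $Q' \subset Q$: since $Q + Q$ is again of the form $\P(\cdot\,; N^{1/4}, q)$, we obtain a progression $P = \P(I_1; N^{1/4}, q)$, contained in $Q' + Q' \subset 2\sqrt{2\sqrt{2A}}$, with common difference $q$ and $|P| \geq |Q| \gg \eps q^{-1} N^{1/4}$; consequently $|I_1| \gg |P| q N^{-1/4} \gg \eps \gg 1$, and $I_1$ lies in a fixed compact subinterval of $(0,\infty)$ (it sits near $2(2x)^{1/4}$, hence well inside $[0.1,10]$, with ample room even after any rescaling of $N$). Finally let $I := \sqrt{I_1}$, the interval of square-roots of points of $I_1$; as $I_1$ is bounded away from $0$ with $|I_1| \gg 1$, the same holds for $I$, and $I \subset [0.1, 10]$. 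If $s \in \N$ satisfies $q \mid s$ and $s/N^{1/8} \in I$, then $s^2/N^{1/4} \in I_1$, and $q \mid s$ gives $q^2 \mid s^2$, so in particular $q \mid s^2$; hence $s^2 \in \P(I_1; N^{1/4}, q) = P \subset 2\sqrt{2\sqrt{2A}}$, i.e.\ $s \in \sqrt{2\sqrt{2\sqrt{2A}}}$. Therefore $\P(I; N^{1/8}, q) \subset \sqrt{2\sqrt{2\sqrt{2A}}}$ with $q = O(1)$, $I \subset [0.1,10]$ and $|I| \gg 1$, as required.

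Every step above is either a direct invocation of a black box or a triviality; the only thing requiring care is the bookkeeping of parameters. Concretely, one must ensure that the irrationality hypothesis of Proposition \ref{sec-4-main} is satisfied — which forces the choice of function (namely $\Omega$, modified by the rescaling) fed into Proposition \ref{sec-3-main} — and one must verify that the density loss $O(\eta_0 \eps q^{-1} N^{1/4})$ appearing in Proposition \ref{sec-4-main} really is an $O(\eta_0)$-fraction of $|Q| \asymp \eps q^{-1} N^{1/4}$, so that Lemma \ref{simple} is applicable. Neither point is deep, and I do not expect any genuine obstacle once Propositions \ref{sec-3-main} and \ref{sec-4-main} are in hand.
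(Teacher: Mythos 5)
Your proof is correct and follows essentially the same route as the paper: apply Proposition \ref{sec-3-main} with the growth function supplied by Proposition \ref{sec-4-main}, feed the resulting $Y'=2A\cap Y$ into Proposition \ref{sec-4-main}, upgrade the $(1-O(\eta_0))$-dense subset of $Q$ to a genuine progression via Lemma \ref{simple}, and take square roots. Your extra bookkeeping reconciling the range $x\in[2,4]$ from Proposition \ref{sec-3-main} with the hypothesis $x\in[0,3]$ of Proposition \ref{sec-4-main} is a point the paper silently elides, and your fix (rescaling $N$ and adjusting $\Omega$ accordingly) is sound.
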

\begin{proof}
Let $\eta > 0$ be a quantity to be specified later. Let $\Omega : \N^3 \rightarrow \R_+$ be the growth function appearing in the statement of Proposition \ref{sec-4-main}. Apply Proposition \ref{sec-3-main} with this function. Let $q,d,\eps, \theta, z, b$ be as in the conclusion of that proposition. Taking $Y$ as in the statement of Proposition \ref{sec-4-main}, Proposition \ref{sec-3-main} then tells us that $Y' := (A + A) \cap Y = 2A \cap Y$ satisfies the hypotheses of Proposition \ref{sec-4-main}. It follows that $2\sqrt{Y'}$, and hence $2\sqrt{2A}$, contains $t^2$ for all but at most $O(\eta \eps q^{-1} N^{1/4})$ values of $t \in Q = \P([(2x)^{1/4} - \frac{\eps}{100}, (2x)^{1/4} + \frac{\eps}{100}]; N^{1/4}, q)$. Therefore $\sqrt{2\sqrt{2A}}$ contains all but at most $O(\eta \eps q^{-1} N^{1/4})$, and therefore at least $(1 - C\eta) |Q|$, of the elements of $Q$.  If $\eta$ is chosen suitably, this is at least $\frac{9}{10}|Q|$ elements of $Q$, and so by Lemma \ref{simple} we see that $2\sqrt{2\sqrt{2A}}$ contains a subprogression $Q' \subset Q$ of the form $Q' = \P(I; N^{1/4}, q)$ with $|I| \gg \eps$. Finally, note that $\sqrt{Q'}$ contains a progression of the form $\P(I'; N^{1/8}, q)$ for some $I' \subset [0.1, 10]$ with $|I| \gg \eps$.\end{proof}

We are finally ready to complete the proof of Theorem \ref{mainthm}. Suppose we have a 2-colouring $V \cup W$ of all sufficiently large positive integers, with no monochromatic solution to $x + y = z^2$. Without loss of generality, there are infinitely many $N$ such that $|V \cap [N, 2N)| \geq \frac{N}{2}$.  Then we have the following chain of inclusions:
\[ \sqrt{2V} \subset W,\]
\[ \sqrt{2\sqrt{2V}} \subset \sqrt{2W} \subset V,\]
\[ \sqrt{2 \sqrt{2 \sqrt{2V}}} \subset \sqrt{2V} \subset W.\]
It follows from Proposition \ref{sec6-main} that $W$ contains , for infinitely many $N$, a progression $\P(I_N; N^{1/8}, q_N)$, where $I_N \subset [0.1, 10]$, $|I_N| \gg 1$ and $q_N = O(1)$, both of these uniformly in $N$. By pigeonholing in the value of $q_N$, we may assume that $q_N = q$ does not depend on $N$. Moreover, taking $M = \lceil 10/\inf |I_N|\rceil$ we see that every $I_N$ contains one of the finite collection of intervals $[\frac{i}{M}, \frac{i+1}{M}]$, $M/10 \leq i \leq 10M$. Therefore we may pigeonhole in the choice of interval as well and assume that $I_N = I$ does not depend on $N$. Thus $W$ contains $\P(I; N^{1/8}, q)$ for some $I \subset [0,1, 10]$ and for infinitely many $N$. Rescaling $N$, we see that $W$ contains $\P([1, 1 + c]; N, q)$ for infinitely many $N$ and for some $c > 0$.

From now on, this is the only consequence of the elaborate techniques of the earlier parts of the paper that we will require. 

Using Proposition \ref{gaps-result} as a tool, we find longer and longer progressions inside $W$. The following lemma formalises this process.
\begin{lemma}\label{lem64}
Let $P_1 = \P([\alpha_1, \beta_1]; N, q)$ and $P_2= \P([\alpha_2, \beta_2], N, q)$. Suppose that $\gamma_1 > \sqrt{\alpha_1^2 + \alpha_2^2}$ and that $\gamma_2 < \sqrt{\beta_1^2 + \beta_2^2}$. Then if $N$ is large enough \textup{(}depending on $\alpha_i, \beta_i,\gamma_i, q$\textup{)} we have
\[ \P([\gamma_1, \gamma_2]; N, q) \subset \sqrt{ P_1^2 + P_2^{2} - P_1 - P_2}.\]
\end{lemma}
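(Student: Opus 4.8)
The plan is to fix an arbitrary $m \in \P([\gamma_1,\gamma_2]; N, q)$ and show $m \in \sqrt{P_1^2 + P_2^2 - P_1 - P_2}$. Since $q \mid m$ we have $m/N \in [\gamma_1,\gamma_2]$, hence $m^2/N^2 \in [\gamma_1^2,\gamma_2^2]$; we may also assume $\gamma_1 \le \gamma_2$, as otherwise $\P([\gamma_1,\gamma_2];N,q) = \emptyset$. It suffices to produce $n_1 \in P_1$, $n_2 \in P_2$ with $n_1^2 + n_2^2 - m^2 \in P_1 + P_2$, for then $n_1^2 + n_2^2 - m^2 = n_1' + n_2'$ with $n_1' \in P_1$, $n_2' \in P_2$, giving $m^2 = n_1^2 + n_2^2 - n_1' - n_2' \in P_1^2 + P_2^2 - P_1 - P_2$. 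I would first record two elementary observations. First, writing $P_i = q L_i$ where $L_i \subset \Z$ is the interval of integers lying in $[\alpha_i N/q, \beta_i N/q]$ (nonempty for $N$ large, since $\alpha_i < \beta_i$), the sumset of two intervals of consecutive integers is again an interval of consecutive integers, so $P_1 + P_2 = q(L_1 + L_2)$ is a genuine arithmetic progression of common difference $q$ whose least and greatest elements differ from $(\alpha_1+\alpha_2)N$ and $(\beta_1+\beta_2)N$ respectively by at most $2q$; thus for $N$ large $P_1 + P_2$ contains every multiple of $q$ in $[(\alpha_1+\alpha_2)N + 2q,\ (\beta_1+\beta_2)N - 2q]$. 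Second, for any $n_1 \in P_1$, $n_2 \in P_2$ we have $q \mid n_1, n_2$ and $q \mid m$, so $q^2 \mid n_1^2 + n_2^2 - m^2$; in particular $n_1^2 + n_2^2 - m^2$ is automatically a multiple of $q$.

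The heart of the argument is to apply Proposition \ref{gaps-result} not to $m^2$ directly but to a shifted target. Put $c := \tfrac12(\alpha_1+\alpha_2+\beta_1+\beta_2)$, the midpoint of the open interval $(\alpha_1+\alpha_2,\beta_1+\beta_2)$, and choose reals $\gamma_1^\ast \in (\alpha_1^2+\alpha_2^2,\gamma_1^2)$ and $\gamma_2^\ast \in (\gamma_2^2,\beta_1^2+\beta_2^2)$; these exist precisely because $\gamma_1 > \sqrt{\alpha_1^2+\alpha_2^2}$ and $\gamma_2 < \sqrt{\beta_1^2+\beta_2^2}$, and $\gamma_1^\ast < \gamma_1^2 \le \gamma_2^2 < \gamma_2^\ast$, so the hypotheses of Proposition \ref{gaps-result} (with its ``$\gamma_i$'' taken to be $\gamma_i^\ast$, same $\alpha_i,\beta_i,q$) are met. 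Apply it with target $n := m^2 + cN$: since $n/N^2 = m^2/N^2 + c/N$ and $m^2/N^2 \in [\gamma_1^2,\gamma_2^2]$, for $N$ large we have $\gamma_1^\ast \le n/N^2 \le \gamma_2^\ast$, so the proposition yields $n_1 \in P_1$, $n_2 \in P_2$ with $|n_1^2 + n_2^2 - (m^2 + cN)| \ll \sqrt N$, i.e. $n_1^2 + n_2^2 - m^2 = cN + O(\sqrt N)$. Because $c$ is strictly interior to $(\alpha_1+\alpha_2,\beta_1+\beta_2)$, for $N$ large this quantity lies in $[(\alpha_1+\alpha_2)N + 2q,\ (\beta_1+\beta_2)N - 2q]$, and by the two observations above it is a multiple of $q$ in that range, hence lies in $P_1 + P_2$. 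This completes the reduction, and since $m$ was arbitrary, the lemma follows (all thresholds on $N$ depending only on $\alpha_i,\beta_i,\gamma_i,q$, via the choices of $c$, $\gamma_i^\ast$ and the implied constant in Proposition \ref{gaps-result}).

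The one genuinely necessary idea is the shift by $cN$. Applied to $m^2$ itself, Proposition \ref{gaps-result} only places a sum of two squares within $O(\sqrt N)$ of $m^2$, which is useless once $\alpha_1,\alpha_2 > 0$, since then every element of $P_1 + P_2$ has size at least $(\alpha_1+\alpha_2-o(1))N \gg \sqrt N$ and cannot equal $n_1^2 + n_2^2 - m^2$. Shifting the target into the interior of the feasible window $(\alpha_1+\alpha_2,\beta_1+\beta_2)$ creates slack $(c-\alpha_1-\alpha_2)N$ and $(\beta_1+\beta_2-c)N$ on either side, which comfortably absorbs both the $O(\sqrt N)$ error coming from Proposition \ref{gaps-result} and the $O(q)$ boundary losses in the description of $P_1 + P_2$. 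Everything else — the sumset structure of $P_1 + P_2$ and the divisibility of $n_1^2+n_2^2-m^2$ by $q$ (indeed by $q^2$) — is automatic, so I expect no real obstacle beyond getting this bookkeeping right.
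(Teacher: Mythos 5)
Your proof is correct and follows essentially the same route as the paper: apply Proposition \ref{gaps-result} to a target shifted into the interior of the window spanned by $P_1+P_2$, then use the divisibility of $n_1^2+n_2^2-m^2$ by $q$ and the fact that $P_1+P_2$ contains every multiple of $q$ in that window. The paper compresses the shifting step into ``it is easy to see,'' whereas you make it explicit; otherwise the arguments coincide.
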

\emph{Remark.} Here and in what follows, $A^2$ means $\{a^2 : a \in A\}$ and \emph{not} $a \cdot a' : a, a' \in A$ as one might find in other literature.
\begin{proof}
Fix $\tilde \gamma_1$, $\tilde\gamma_2$ with $\gamma_1 >  \tilde\gamma_1 > \sqrt{\alpha_1^2 + \alpha_2^2}$ and $\gamma_2 < \tilde\gamma_2 <  \sqrt{\beta_1^2 + \beta_2^2}$. 
By Proposition \ref{gaps-result}, $P_1^2 + P_2^2$ has a point within $O(\sqrt{N})$ of every point of $\P([\tilde \gamma_1^2, \tilde \gamma_2^2]; N^2, q)$. $P_1 + P_2$ is a progression of length $\gg N$ consisting of multiples of $q$, and so it is easy to see that $P_1^2 + P_2^2 - P_1 - P_2$ contains all of $\P([\tilde \gamma_1^2, \tilde \gamma_2^2]; N^2, q)$ with the possible exception of points within $O(N)$ of the endpoints, and hence it contains $\P([\gamma_1, \gamma_2]; N^2, q)$.
\end{proof}

Starting from the fact that 
\begin{equation}\label{start-fact}\P([1, 1+c]; N, q) \subset W \qquad \mbox{for infinitely many $N$},\end{equation} we apply Lemma \ref{lem64} iteratively. Observe that if $n_1, n_2, n_3, n_4 \in W$ then $n_1^2 - n_3 \in V, n_2^2 - n_4 \in V$, and hence (if it is an integer) \[ \sqrt{n_1^2 + n_2^2 - n_3 - n_4} \in W.\] Thus if $P_1, P_2 \subset W$ then $\sqrt{ P_1^2 + P_2^{2} - P_1 - P_2} \subset W$. Using this observation and repeated applications of Lemma \ref{lem64}, we see that for any finite $k$ and any choice of closed intervals $I_i \subset (\sqrt{i}, (1 + c) \sqrt{i})$ there is an infinite sequence of $N$s such that $\P(I_i; N, q) \subset W$ for $i = 1,2,\dots, k$. 

We claim that there is some $k = k(c)$ and some choice of $I_1,\dots, I_k$ such that $\bigcup_{i = 1}^k I_i$ contains an interval of the form $[x, 3x]$. First note that if $i > 1/2c$ then $(1 + c)\sqrt{i} > \sqrt{i+1}$, and so the intervals $(\sqrt{i}, (1 + c) \sqrt{i})$ and $(\sqrt{i+1}, (1 + c)\sqrt{i+1})$ overlap. Thus if we set $i_0 := \lceil 1/2c\rceil$ and $i_1 := 9i_0$ then $\bigcup_{i_0 \leq i \leq i_1} (\sqrt{i}, (1 + c) \sqrt{i})$ is an interval containing a subinterval of the form $[x,3x]$. 

Thus $W$ contains $\P([x, 3x]; N, q)$ for infinitely many $N$, and hence (replacing $N$ by $\lfloor 1.1 x N\rfloor$) we see that we have bootstrapped \eqref{start-fact} to the stronger statement that 
\[ \P([1,2]; N, q) \subset W \qquad \mbox{for infinitely many $N$}.\]
Pick one such $N = N_0$, sufficiently large. Thus
\begin{equation}\label{use-soon-1}  \P([1,2]; N_0, q) \subset W.\end{equation}
By Lemma \ref{lem64} once more (and the inequalities $\sqrt{2} < \frac{3}{2} < \frac{5}{2} < \sqrt{8}$) we have
\[ \P([\frac{3}{2}, \frac{5}{2}]; N_0, q) \subset W.\] Together with \eqref{use-soon-1}, this implies that 
\[ \P([1,2]; N_0 + 1, q) \subset W.\]
Continuing inductively, we obtain
\[ \bigcup_{N \geq N_0}  \P([1,2]; N, q) \subset W.\]
This implies that all sufficiently large multiples of $q$ lie in $W$. But there are arbitrarily large multiples $x,y,z$ of $q$ satisfying $x + y = z^2$, and so at last we obtain a contradiction.

\appendix

\section{Some smooth cutoff functions}\label{bump-appendix}

In the main body of the paper we required various smooth cutoff functions to (characteristic functions of) discrete intervals, balls in the torus $\T^d$ and Bohr sets. In this appendix we prove the existence of functions with required properties.

It is convenient to have a $C^{\infty}$-function $f : \R \to [0,1]$ with $\Supp(f)\subset [-1,1]$ and $\int f(x) dx = 1$. Such a function can be constructed with a ``trick'', for example defining $f(x) = C \exp (\frac{1}{x^2 - 1})$ for an appropriate constant $C$ (for a very elegant analysis of this, see \cite[Lemma 9]{bombieri-friedlander-iwaniec}), or by convolving an infinite sequence of normalised characteristic functions of intervals $[-\ell_j, \ell_j]$ with $\sum_j \ell_j \leq 1$.

Let $g : \R \rightarrow \R$ be any compactly supported $C^{\infty}$ function (for example, $f$). Then, since the $M$th derivative $g^{(M)}$ is continuous and supported on $[-1,1]$, we have the bound $\Vert g^{(M)} \Vert_{\infty} = O_M(1)$. By integration by parts this leads to the standard bound \begin{equation}\label{standard} |\widehat{g}(\xi)| \ll_M \min( 1, |\xi|^{-M})\end{equation} for $\xi \in \R$, where here $\widehat{g}(\xi) = \int_{\R} g(x) e(- \xi x)dx$.

\begin{lemma}\label{app-1}
Let $N \in \N$. There is a function $\psi = \psi_N : \N \rightarrow [0,\infty)$ with $\psi(n) = 1$ for $N \leq n < 2N$ and $\Vert \widehat{\psi} \Vert_1 = O(1)$ \textup{(}uniformly in $N$\textup{)}, where the Fourier transform $\widehat{\psi}(t)$ is defined to be $\sum_n \psi(n) e(-tn)$ for $t \in \T$.
\end{lemma}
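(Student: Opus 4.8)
The plan is to construct $\psi$ as a sampled smooth bump function adapted to the interval $[N,2N)$, and then transfer the rapid Fourier decay of the continuous bump to the discrete setting via Poisson summation. First I would fix the $C^\infty$ function $f$ with $\Supp(f) \subset [-1,1]$ and $\int f = 1$ from the start of the appendix, and build from it a smooth function $\Phi : \R \to [0,1]$ which equals $1$ on $[1,2]$ and is supported on, say, $[1/2, 5/2]$: this can be done by convolving the characteristic function $1_{[3/4, 9/4]}$ with a scaled copy $\lambda^{-1} f(\cdot/\lambda)$ for a small fixed $\lambda$, which gives $\Phi \in C^\infty$ with all the required support and normalisation properties, and with $\|\Phi^{(M)}\|_\infty = O_M(1)$ for every $M$. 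Then set $\psi(n) := \Phi(n/N)$. Clearly $\psi(n) = 1$ for $N \leq n < 2N$ (in fact for $N \leq n \leq 2N$), and $\psi$ is supported on roughly $N/2 \leq n \leq 5N/2$.

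Next I would compute $\widehat\psi(t) = \sum_n \Phi(n/N) e(-tn)$ and compare it to the continuous Fourier transform. By Poisson summation,
\[
\widehat\psi(t) = \sum_{n \in \Z} \Phi(n/N) e(-tn) = N \sum_{k \in \Z} \widehat{\Phi}\big(N(t + k)\big),
\]
where $\widehat{\Phi}(\xi) = \int_\R \Phi(x) e(-\xi x)\, dx$, and I am identifying $t \in \T$ with a representative in $(-1/2, 1/2]$. The standard bound \eqref{standard} applied to $\Phi$ gives $|\widehat{\Phi}(\xi)| \ll_M \min(1, |\xi|^{-M})$ for each $M$. Now I would estimate $\|\widehat\psi\|_1 = \int_{-1/2}^{1/2} |\widehat\psi(t)|\, dt$ by substituting $\xi = Nt$ (so $dt = d\xi/N$), which turns the $k=0$ term's contribution into $\int_{-N/2}^{N/2} |\widehat\Phi(\xi)|\, d\xi \leq \int_\R |\widehat\Phi(\xi)|\, d\xi = O(1)$, finite and independent of $N$ by using \eqref{standard} with, say, $M = 2$. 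The tail terms $k \neq 0$ are handled similarly: for $|t| \leq 1/2$ and $k \neq 0$ we have $|t + k| \geq |k|/2$, so $N|\widehat\Phi(N(t+k))| \ll_M N (N|k|/2)^{-M}$, and summing over $k \neq 0$ and integrating over $t \in (-1/2,1/2]$ gives a contribution $\ll_M N^{1-M} \sum_{k \neq 0} |k|^{-M}$, which for $M = 2$ is $O(N^{-1}) = O(1)$. Adding the two contributions yields $\|\widehat\psi\|_1 = O(1)$ uniformly in $N$, as required.

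The main obstacle, such as it is, is purely bookkeeping: making sure the Poisson summation formula applies (it does, since $\Phi$ is Schwartz, being smooth and compactly supported, so the sum $\sum_n \Phi(n/N) e(-tn)$ converges absolutely and equals the periodisation of $N\widehat{\Phi}(N\cdot)$) and being careful about whether we want $\psi(n) = 1$ on the half-open block $[N,2N)$ versus the closed block — this is cosmetic since $\Phi \equiv 1$ on all of $[1,2]$. One should also note that we never used positivity of $\Phi$ in the decay estimate, but $\Phi \geq 0$ by construction as a convolution of nonnegative functions, so indeed $\psi : \N \to [0,\infty)$. A remark worth including is that the same construction, with $\Phi$ chosen to be $\geq 1_{[1,2]}$ pointwise (a majorant) or $\leq 1_{[1,2]}$ (a minorant), produces the smooth majorants/minorants used elsewhere in the paper with the analogous $\|\widehat\psi\|_1 = O(1)$ bound; this is essentially the content of Lemma~\ref{lem-a2} and its relatives. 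No genuinely hard analysis is needed here — the lemma is a soft consequence of \eqref{standard} and Poisson summation.
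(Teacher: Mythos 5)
Your construction is essentially identical to the paper's: the paper takes $g = 1_{[0,3]} \ast f$ (equal to $1$ on $[1,2]$), sets $\psi(n) = g(n/N)$, and applies Poisson summation together with the decay bound \eqref{standard}, exactly as you do with your $\Phi$. The only cosmetic difference is that the paper bounds $\Vert \widehat{\psi}\Vert_1$ by unfolding the periodisation into $\Vert \widehat{g}\Vert_{L^1(\R)}$ in one step, whereas you split off the $k=0$ term and estimate the tails separately; both are correct.
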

\begin{proof} (Sketch.)
Define first a function $g : \R \rightarrow \R$ via $g = 1_{[0,3]} \ast f$. It is easy to check that $g$ is $C^{\infty}$, compactly supported, and that $g(x) = 1$ for $x \in [1,2]$. We may then define $\psi(n) := g(n/N)$. By the Poisson summation formula we have 
\[ \widehat{\psi}(\theta) = N \sum_{k \in \Z} \widehat{g}(N (k + \theta)),\]
and so
\[ \Vert \widehat{\psi} \Vert_1 \leq N \int^{\infty}_{\infty} |\widehat{g}(Nu)| du  = \Vert \widehat{g} \Vert_1,\]
where the $\ell^1$ norm on the right is taken on $\R$. The bound $\Vert \widehat{g} \Vert_1 = O(1)$ follows quickly by taking $M = 2$ in \eqref{standard}.

Alternatively, one may take $\psi$ to be a de la Vall\'ee Poussin type kernel as in the figure and proceed quite explicitly using the fact that this is a difference of two Fej\'er kernels. Details may be found in \cite[Section 1.2]{katznelson}.
\begin{figure}[htpb]
\centering
\begin{tikzpicture}
\draw[->] (0,0) -- (3.5,0);
\draw[->] (0,0) -- (0,1.5);
\draw[line width=2] (0,0)--(1,1)--(2,1)--(3,0);
\draw (1,0.2)--(1,-0.2) node[below] {$N$};
\draw (2,0.2)--(2,-0.2) node[below] {$2N$};
\draw (3,0.2)--(3,-0.2) node[below] {$3N$};
\draw (0.2,1)--(-0.2,1) node[left] {$1$};
\end{tikzpicture}
\caption{de la Vall\'ee Poussin kernel.}
\label{fig-1}
\end{figure}
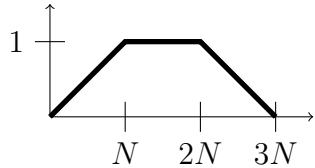
\end{proof}

Suppose now that $\eps > 0$ and that $d \in \N$.
Let us define $f_\eps : \T^d\to [0,\infty)$ by $f_\eps(x) = (2\eps)^{-d} \prod_{i=1}^df(\tilde x_i/\eps)$, where $\tilde x$ is the unique element of $(-\frac{1}{2},\frac{1}{2}]^d$ mapping to $x$ under the natural projection. Note that $\int_{\T^d} f_\eps(x)dx = 1$.

\begin{lemma}\label{lem-a2}
There is a majorant $\psi^+_{\eps}$ and a minorant $\psi^-_{\eps}$ to the ball $B_{\eps}(0)$ in $\T^d$ satisfying
\begin{enumerate}
\item $\frac{1}{2} \leq (2\eps)^d\int_{\T^d} \psi^{\pm}_{\eps}(t) dt \leq 2$ and
\item $\sum_{\mathbf{r} \in \Z^d \setminus \{0\}} |\widehat{\psi^{\pm}_{\eps}}(\mathbf{r})| \Vert \mathbf{r} \Vert_{1} =  O_{\eps, d}(1)$.
\end{enumerate}
\end{lemma}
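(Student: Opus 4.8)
The plan is to build $\psi^{\pm}_{\eps}$ as smoothed versions of $1_{B_{\eps}(0)}$ constructed from the bump function $f$ via convolution on $\T^d$, chosen so that the parameters can be tuned to get a clean majorant/minorant. Concretely, I would fix a small absolute constant $c \in (0,1)$ (say $c = 1/10$) and set $\psi^+_{\eps} := 1_{B_{(1-c)\eps}(0)} \ast f_{c\eps}$ and $\psi^-_{\eps} := 1_{B_{(1-c)\eps}(0)} \ast f_{c\eps/2}$, where the convolution is taken on $\T^d$ and $B$ denotes the $\ell^\infty$-ball (consistent with the paper's $\Vert\cdot\Vert_{\T^d}$). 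Since $f_{\delta}$ is supported on $B_{\delta}(0)$ and has integral $1$, the support of $\psi^+_{\eps}$ is contained in $B_{(1-c)\eps + c\eps}(0) = B_{\eps}(0)$ and $\psi^+_\eps \le 1$ (a probability average of the indicator), so $\psi^+_{\eps}$ is indeed a majorant of $1_{B_{\eps}(0)}$... wait, one must be careful: $\psi^+_\eps$ as written is at most $1$ but need not dominate $1_{B_\eps(0)}$. The fix is to take $\psi^+_\eps := 1_{B_{(1+c)\eps}(0)} \ast f_{c\eps}$ instead, whose support lies in $B_{(1+2c)\eps}(0)$ and which equals $1$ on $B_{\eps}(0)$ provided $c$ is small, hence is a genuine majorant; here I only need it to majorize, and the slightly larger support is harmless since property (1) only asks for a two-sided bound on $(2\eps)^d \int \psi^\pm_\eps$. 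Similarly take $\psi^-_\eps := 1_{B_{(1-2c)\eps}(0)} \ast f_{c\eps}$, supported in $B_{(1-c)\eps}(0) \subset B_\eps(0)$ and $\le 1_{B_\eps(0)}$.

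For property (1): since $\int f_{c\eps} = 1$ on $\T^d$ (valid once $c\eps < 1/2$, which holds for $\eps$ small; for $\eps$ bounded below the whole statement is trivial by taking $\psi^\pm_\eps$ to be suitable constants), we get $\int_{\T^d}\psi^{\pm}_\eps = \mathrm{vol}(B_{(1\pm 2c)\eps}(0)) = ((1\pm 2c)2\eps)^d$, so $(2\eps)^d \int \psi^\pm_\eps$ lies between $(1-2c)^d$ and $(1+2c)^d$ — not between $\frac12$ and $2$ unless $d$ is bounded. This exposes the real subtlety: the bounds in (1) must hold uniformly in $d$. To handle this I would instead scale the convolving bump to depend on $d$, e.g. use $f_{c\eps/d}$ (support radius $c\eps/d$), giving $\int\psi^\pm_\eps$ between $(1 - 2c/d)^d 2^d\eps^d \ge e^{-4c} 2^d \eps^d$ and $(1+2c/d)^d 2^d\eps^d \le e^{2c} 2^d\eps^d$; choosing $c$ so that $e^{2c} \le 2$ and $e^{-4c} \ge \frac12$ (e.g. $c = 1/10$) then gives property (1) uniformly in $d$. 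The implied constant in (2) is still allowed to depend on $d$ and $\eps$, so only (1) needs this care.

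For property (2): write $\widehat{\psi^\pm_\eps}(\mathbf r) = \widehat{1_{B_{(1\pm 2c/d)\eps}(0)}}(\mathbf r)\cdot \widehat{f_{c\eps/d}}(\mathbf r)$. The indicator's Fourier coefficient is a product of one-dimensional Dirichlet-type factors, bounded by $O(\min(\eps, |r_i|^{-1}))$ in each coordinate. The bump's Fourier coefficient factorises as $\prod_i \widehat{f}(c\eps r_i/d)$, and by \eqref{standard} (applied with $M$ large, say $M = d+2$) each factor is $\ll_M \min(1, (c\eps|r_i|/d)^{-M})$, so $|\widehat{f_{c\eps/d}}(\mathbf r)| \ll_{\eps,d} \prod_i \min(1, |r_i|^{-(d+2)})$. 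Multiplying and summing over $\mathbf r \in \Z^d\setminus\{0\}$, the weight $\Vert \mathbf r\Vert_1 = \sum_i |r_i|$ is absorbed because each coordinate sum $\sum_{r_i} |r_i|\cdot |r_i|^{-(d+2)}$ converges (indeed $\sum |r_i|^{-(d+1)} < \infty$ for $d \ge 1$), and the remaining coordinates contribute $\sum_{r_j}\min(1,|r_j|^{-(d+2)}) = O(1)$ each; all of this with constants depending on $\eps$ and $d$, as permitted. I expect the main obstacle to be exactly the uniformity-in-$d$ issue in property (1) — it forces the $d$-dependent scaling of the mollifier and a small computation with $(1+x/d)^d$ — whereas property (2) is a routine application of \eqref{standard} and the factorised structure.
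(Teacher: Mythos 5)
Your proposal is correct and essentially matches the paper's construction: the paper likewise takes $\psi^+_{\eps} = 1_{B_{\eps+\eps'}(0)} \ast f_{\eps'}$ with the $d$-dependent scaling $\eps' = \eps/10d$ precisely to make the volume bound $(1+\eps'/\eps)^d \leq e^{1/10} \leq 2$ uniform in $d$ — the exact subtlety you identified. The only cosmetic difference is in (2), where the paper integrates by parts in the coordinate realising $\Vert \mathbf{r}\Vert_{\infty}$ to get $\ll_{\eps,d,M} \Vert\mathbf{r}\Vert_{\infty}^{-M}$ rather than using the factorised product bound, but both are routine and equivalent.
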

\begin{proof}
We construct $\psi_{\eps}^+$. The construction of $\psi^-_{\eps}$ is very similar and is left to the reader.
Set $\eps' := \eps/10d$. For $x\in \T^d$ set
\[
    \psi^+_\epsilon(x) = 1_{B_{\epsilon + \eps'}(0)} * f_{\epsilon'}(x) = \int_{\T^d} f_{\epsilon'}(x-y)1_{B_{\epsilon + \eps'}(0)}(y)dy. 
\]
Since $f_{\eps'}$ is supported on $B_{\eps'}(0)$, $\psi^+_{\eps}(x) = 1$ for $x \in B_{\eps}(0)$, and in particular $\psi^+_{\eps}$ is a majorant to the ball $B_{\eps}(0)$.

Moreover $\psi_{\eps}$ is bounded pointwise by $1$ and is supported on $B_{\eps + \eps'}(0)$, whence 
\[ \int_{\T^d} \psi^+_{\eps}(t) dt \leq \mu_{\T^d}(B_{\eps + \eps'}(0)) = (1 + \frac{\eps'}{\eps})^d (2\eps)^d \leq 2(2\eps)^d.\]
Thus (1) is satisfied.

Next we turn to point (2). Suppose that $\mathbf{r} \in \Z^d \setminus \{0\}$. Write $\mathbf{r} = (r_1,\dots, r_d)$, and assume without loss of generality that $|r_1| = \|\mathbf{r}\|_\infty$. Performing $M$ integration by parts in the integral
\[
    \widehat{\psi^+_\eps}(\mathbf{r}) = \int_{\T^d} \psi^+_\eps(x)e(-x\cdot\mathbf{r}) dx
\]
with respect to $x_1$, to get that 
\[
    \widehat{\psi^+_\eps}(\mathbf{r}) = \frac{1}{(-2\pi i r_1)^M}\int \frac{\partial^M \psi^+_\eps(x)}{\partial x_1^M}e(-x\cdot\mathbf{r})dx \ll_{\epsilon,d,M} \|\mathbf{r}\|_\infty^{-M}
\]
for any $M \in \N$ (this is essentially the same bound as  \eqref{standard}). The $\ell^1$ and $\ell^{\infty}$ norms of $\mathbf{r}$ are comparable up to factors of $O_d(1)$, and hence
\[ \sum_{\mathbf{r}\in \Z^d \setminus \{0\}} |\widehat{\psi^+_\eps}(\mathbf{r})| \|\mathbf{r}\|_1\ll_{\eps,d,M} \sum_{\mathbf{r} \in \Z^d \setminus \{0\}} \Vert \mathbf{r} \Vert_1^{1 - M}.\] Taking $M = d+2$, it is easy to see that the sum on the right converges and is bounded by $O_d(1)$.
\end{proof}

Finally we turn to the most complicated of our constructions, a smooth approximant for the Bohr-type set $X$ considered in Section \ref{sec5}.

\begin{lemma}\label{lem-a3}
Let $0 < \eps' < \eps < 1$, $d , q \in \N$, $x \in \R$ and $\theta, z \in \T^d$. Then there is an $A = A(\eps, \eps', d, q)$ with the following property. Suppose that $N$ is sufficiently large in terms of $\eps, \eps', d, q, A$.  Set
\[
X = \{ n \in \N : n \equiv u \md{q}, |\frac{n}{N} - x|, \|\theta n - z\|_{\T^d} \leq \eps \}
\]
and
\[
X_{-} = \{ n \in \N : n \equiv u \md{q}, |\frac{n}{N} - x|, \|\theta n - z\|_{\T^d} \leq \eps-\eps' \}.
\]
Suppose that $\eps' < \eps/10d$ and $\theta$ is $(A,N)$-irrational. Then there exists a function $\chi$ satisfying
\begin{enumerate}
\item $1_{X_{-}}(n)\leq\chi(n) \leq 1_X(n)$ for all $n$;
\item $\Vert \widehat{\chi} \Vert_1 = O_{\eps,\eps',q,d}(1)$ and
\item $\sum_n \chi(n) \geq \frac{1}{2}(2\eps)^{d+1}q^{-1} N$.
\end{enumerate}

\end{lemma}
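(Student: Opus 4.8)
The plan is to build $\chi$ as a product (or more precisely a convolution-smoothed indicator) of three separate cutoffs corresponding to the three constraints defining $X$: the congruence condition $n \equiv u \md q$, the archimedean condition $|\frac nN - x| \leq \eps$, and the Bohr condition $\Vert \theta n - z\Vert_{\T^d} \leq \eps$. The congruence part is just the honest indicator $1_{n \equiv u \md q} = \frac1q\sum_{r \md q} e(\tfrac{r(n-u)}{q})$, which has $\ell^1$ Fourier norm exactly $1$ and causes no trouble. For the archimedean constraint I would take a smooth function $g_1:\R\to[0,1]$ with $g_1 = 1$ on $[x - (\eps - \eps'/3), x + (\eps - \eps'/3)]$ and $\Supp g_1 \subset [x-\eps, x+\eps]$, obtained from the bump function $f$ of Appendix \ref{bump-appendix} by convolution (exactly as $g = 1_{[0,3]}\ast f$ was built in Lemma \ref{app-1}); then set the first factor to be $g_1(n/N)$, whose Fourier transform has $\ell^1$ norm $O_{\eps,\eps'}(1)$ by Poisson summation and \eqref{standard}, just as in Lemma \ref{app-1}. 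For the Bohr constraint I would take a smooth function $\Phi:\T^d\to[0,1]$ with $\Phi = 1$ on $B_{\eps - \eps'/3}(0)$ and $\Supp\Phi \subset B_\eps(0)$, built as in Lemma \ref{lem-a2} by convolving $1_{B_{\eps - 2\eps'/3}(0)}$ with $f_{\eps'/3}$; this has $\sum_{\mathbf r}|\widehat\Phi(\mathbf r)| = O_{\eps,\eps',d}(1)$ (in fact with the extra weight $\Vert\mathbf r\Vert_1$, by the integration-by-parts argument in the proof of Lemma \ref{lem-a2}), and we set the third factor to be $\Phi(\theta n - z)$.

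Then I would \emph{define}
\[
\chi(n) := 1_{n \equiv u \md q}\cdot g_1(n/N)\cdot \Phi(\theta n - z).
\]
Property (1) is then immediate: since $\eps' < \eps/10d$ we have $\eps'/3 < \eps - \eps'$ (crudely), so $\chi = 1$ whenever all three slack conditions hold — in particular on $X_-$ — while each factor is supported on its respective $\eps$-region, so $\chi \leq 1_X$; and each factor lies in $[0,1]$, so $0 \leq \chi \leq 1$. For property (2), the Fourier transform of $\chi$ (on $\T$) is a convolution of the Fourier transforms of the three factors: $\widehat{1_{n\equiv u\md q}}$ is a sum of $q$ point masses of total mass $1$, $\widehat{g_1(\cdot/N)}$ has $\ell^1$ norm $O_{\eps,\eps'}(1)$, and the factor $\Phi(\theta n - z) = \sum_{\mathbf r}\widehat\Phi(\mathbf r)e(-\mathbf r\cdot z)e(\mathbf r\cdot\theta n)$ contributes a sum of point masses at the frequencies $\mathbf r\cdot\theta \md 1$ with total mass $\sum_{\mathbf r}|\widehat\Phi(\mathbf r)| = O_{\eps,\eps',d}(1)$; since $\Vert\widehat{\chi}\Vert_1$ is submultiplicative under this triple convolution, we get $\Vert\widehat\chi\Vert_1 = O_{\eps,\eps',q,d}(1)$. (The role of $A$ and the $(A,N)$-irrationality of $\theta$ is \emph{not} needed for (2) with this definition — one should simply not use $\theta$-irrationality here — which is a slight discrepancy with the statement but harmless, since one may take $A$ as large as one likes and the conclusion only gets easier.)

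The main obstacle is property (3), the lower bound $\sum_n \chi(n) \geq \tfrac12 (2\eps)^{d+1}q^{-1}N$. Here $\chi(n) \geq 1_{X_-}(n)$, so it suffices to show $|X_-| \geq \tfrac12(2\eps)^{d+1}q^{-1}N$, and for this the $(A,N)$-irrationality of $\theta$ genuinely enters. I would count $|X_-|$ by writing $1_{n\equiv u\md q}$ and a smooth minorant $\psi^-_{\eps-\eps'}$ to $B_{\eps-\eps'}(0)$ (from Lemma \ref{lem-a2}) and a smooth minorant to the interval, expand everything in Fourier series, and extract the main term $(2(\eps-\eps'))^{d+1}q^{-1}N \cdot (1 + o(1))$ coming from the zero frequency of the Bohr factor and the zero residue, exactly as in the proofs of Lemmas \ref{lem-square} and \ref{lem5.2}: the nonzero frequencies $\mathbf r\neq 0$ give sums $\sum_{n\in I} e((\mathbf r\cdot\theta + r/q)n + \ldots)$ which, since $\theta$ is $(A,N)$-irrational and $|I| \leq N$, are bounded by $N\cdot (\Vert\mathbf r\Vert_1 A^{-1})^{1/C_1}$ (this is the linear, $k=1$, case of Corollary \ref{cor-weyl-1}, or just a direct geometric-series estimate since $\Vert \mathbf r\cdot\theta\Vert_\T \geq A/N$), and summing against $|\widehat{\psi^-_{\eps-\eps'}}(\mathbf r)|$ and the interval-kernel weights gives a total error $O_{\eps,\eps',d,q}(N A^{-1/C_1})$, which is $\leq \tfrac12\cdot\tfrac12 \cdot$ smaller factors once $A = A(\eps,\eps',d,q)$ is chosen large enough and $N$ is large in terms of $A$. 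Since $2(\eps - \eps') \geq 2\eps(1 - \eps'/\eps) \geq 2\eps\cdot(1 - 1/10d) \geq \eps$ crudely — and in fact one can arrange $(2(\eps-\eps'))^{d+1} \geq \tfrac{2}{3}(2\eps)^{d+1}$ using $\eps' < \eps/10d$ — the main term comfortably dominates $\tfrac12(2\eps)^{d+1}q^{-1}N$, giving (3).
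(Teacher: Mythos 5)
Your construction of $\chi$ as the product of the congruence indicator, a smooth archimedean cutoff and a smooth Bohr cutoff, with (1) read off from the supports, (2) proved by the triple-convolution bound on $\Vert\widehat{\chi}\Vert_1$, and (3) by extracting the zero-frequency main term and killing the nonzero frequencies via the $(A,N)$-irrationality of $\theta$, is essentially the paper's own proof; the only cosmetic difference is that the paper evaluates $\widehat{\chi}(0)$ directly from the already-built cutoffs $g$ and $h$ (via Poisson summation and the decay of $\widehat{g}$) rather than passing to $|X_-|$ with freshly constructed minorants. One small caution on (3): make sure the minorants you invoke have integral close to the full ball/interval measure (as the explicit convolution constructions do, e.g.\ $\int h \geq 0.9(2\eps)^d$), since the crude factor $\tfrac{1}{2}$ guaranteed by the statement of Lemma \ref{lem-a2}(1) alone would leave your main term near $\tfrac{1}{3}(2\eps)^{d+1}q^{-1}N$, short of the required $\tfrac{1}{2}(2\eps)^{d+1}q^{-1}N$.
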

\begin{proof}
Let $g : \R \rightarrow [0,\infty)$ be a $C^{\infty}$ function with $g(t) = 1$ for $|t - x| \leq \eps - \eps'$ and $g(t) = 0$ for $|t - x| > \eps$. Such a function can be obtained by convolving the characteristic function of the interval $\{t : |t - x| \leq \eps - \frac{1}{2}\eps'\}$ with the function $\frac{2}{\eps'} f( \frac{2t}{\eps'})$.

Define a function $h : \T^d \rightarrow [0,\infty)$ by 
\[ h := f_{\eps'/2}*1_{B_{\eps-\eps'/2}(z)} .\]

Now define
\[ \chi(n) := g(\frac{n}{N}) h(\theta n) 1_{n \equiv u \md{q}}.\] 
The relevant support properties (1) may be easily checked. 
Turning to point (2), we begin by noting the expansion \[ 1_{n \equiv u \mdsub{q}} = q^{-1}\sum_{s \in \Z/q\Z} e(\frac{(n - u) s}{q}).\] This implies that 
\begin{equation}\label{eq33} \widehat{\chi}(t) = q^{-1} \sum_{s \in \Z/q\Z} e(-\frac{us}{q}) \widehat{g(\frac{\cdot}{N}) h(\theta \cdot)} (t + \frac{s}{q}).\end{equation} Therefore in order to establish (2) is suffices to prove that 
\begin{equation}\label{suffice-for-2} \Vert \widehat{g(\frac{\cdot}{N}) h(\theta \cdot)} \Vert_1 = O_{\eps, \eps', d}(1). \end{equation}
Fourier expanding $h$ and applying Poisson summation, we have
\begin{align}\nonumber
\widehat{g(\frac{\cdot}{N}) h(\theta \cdot)}(t) & = \sum_n g(\frac{n}{N}) h(\theta n) e(-tn) \\ \nonumber & = \sum_n g(\frac{n}{N}) \sum_{\mathbf{r}} \widehat{h}(\mathbf{r}) e\big((\mathbf{r} \cdot \theta - t) n \big) \\ & = N \sum_{\mathbf{r}} \widehat{h}(\mathbf{r}) \sum_{k \in \Z} \widehat{g}\big( N (t + k - \mathbf{r} \cdot \theta) \big).\label{gh-exp}
\end{align}
Thus
\[
\Vert \widehat{g(\frac{\cdot}{N}) h(\theta \cdot)} \Vert_1  \leq N \sum_{\mathbf{r}} |\widehat{h}(\mathbf{r})| \int^{\infty}_{-\infty} |\widehat{g}(Nu)| du  = \Vert \widehat{g} \Vert_1 \Vert \widehat{h} \Vert_1,
\]
where here the $\ell^1$ norms are on $\Z^d$ and $\R$ respectively.

That $\Vert \widehat{g} \Vert_1 \ll_{\eps, \eps'} 1$ follows immediately from \eqref{standard} with $M = 2$.

By essentially the same reasoning used in the proof of Lemma \ref{lem-a2} we have \begin{equation}\label{moreover} |\widehat{h}(\mathbf{r})| \ll_{\eps, \eps', d, M} \Vert \mathbf{r} \Vert_{\infty}^{-M}.\end{equation} 
Taking $M = d + 1$ we obtain
\[ \Vert \widehat{h} \Vert_1 = O_{\eps, \eps', d}(1).\]
Putting these facts together completes the proof of \eqref{suffice-for-2} and hence of (2).

It remains to verify (3). Note that we have not yet used the irrationality of $\theta$. From \eqref{eq33} we have
\[ \sum_n \chi(n)  = \widehat{\chi}(0)  = q^{-1} \sum_{s \in \Z/q\Z} e(-\frac{us}{q}) \widehat{g(\frac{\cdot}{N}) h(\theta \cdot)}(\frac{s}{q}) .\]
By \eqref{gh-exp}, it follows that 
\begin{equation}\label{chi-exp} \sum_n \chi(n) = N q^{-1} \sum_{\mathbf{r} \in \Z^d} \sum_{s \in \Z/q\Z} \sum_{k \in \Z} e(-\frac{us}{q}) \widehat{h}(\mathbf{r}) \widehat{g}\big(N (\frac{s}{q} + k - \mathbf{r} \cdot \theta) \big).\end{equation}

The contribution from $\mathbf{r} = 0$, $s = 0$, $k = 0$ is $N q^{-1} (\int_{\T^d} h) (\int_{\R} g)$. Since $\eps' < \eps/10d$ we have $\int_{\T^d} h \geq \mu_{\T^d}(B_{\eps - \eps'}(0)) \geq 0.9 (2\eps)^d$, and evidently $\int_\R g \geq 2(\eps - \eps') > 0.9 (2\eps)$. Thus the contribution from this term is $\geq \frac{3}{4}(2\eps)^{d+1} q^{-1} N$. To complete the proof of (3) it suffices to show that the contribution of the other terms to \eqref{chi-exp} is at most $\frac{1}{4}(2\eps)^{d+1} q^{-1} N$, to which end it is enough to show that 
\begin{equation}\label{ets} \sum_{\mathbf{r} \in \Z^d} \sum_{s \in \Z/q\Z} \sum_{k \in \Z} |\widehat{h}(\mathbf{r})| |\widehat{g}(N (\frac{s}{q} + k - \mathbf{r} \cdot \theta) | \leq \frac{1}{4} (2\eps)^{d+1},\end{equation} where the sum omits the term $\mathbf{r} = 0$, $s = 0$, $k = 0$.

By \eqref{moreover} (with $M = d+ 1$) and \eqref{standard} (with $M = 2$), the left hand side is bounded by 
\begin{equation}\label{ets2} O_{\eps, \eps',d}(1) \sum_{\mathbf{r} \in \Z^d} \sum_{s \in \Z/q\Z} \sum_{k \in \Z} \min(1,  \Vert \mathbf{r} \Vert^{-d-1}) \min(1, N^{-2}| k + \frac{s}{q} - \mathbf{r} \cdot \theta |^{-2} ).\end{equation}
If $0 < \Vert \mathbf{r} \Vert_1 \leq A/q$ then it follows from the fact that $\theta$ is $(A,N)$-irrational that $| k + \frac{s}{q} - \mathbf{r} \theta  | \geq \frac{A}{qN}$ (no matter the value of $s$ or $k$). The same is trivially true when $\mathbf{r} = 0$, provided that not both of $s, k$ are zero and that $N$ is sufficiently large.
In the inner sum over $k$ in \eqref{ets2}, the contribution from all but at most one term is $\ll N^{-2}\sum_{m \in \Z \setminus \{0\}} |m|^{-2} \ll N^{-2}$, and so when $\Vert \mathbf{r} \Vert_1 \leq A/q$ the inner sum over $k$ is $\ll \frac{ q^2}{A^2} + N^{-2}$, which is $\ll q^2/A^2$ if $N$ is big enough. Therefore
\begin{align*} \sum_{\substack{\mathbf{r} \in \Z^d \\ \Vert \mathbf{r} \Vert \leq A/q }} & \sum_{s \in \Z/q\Z} \sum_{k \in \Z}  \min(1, \Vert \mathbf{r} \Vert^{-d-1}) \min(1, N^{-2}| k + \frac{s}{q} - \mathbf{r} \cdot \theta |^{-2} ) \\ &  \ll \frac{q^3}{A^2} \sum_{\mathbf{r}} \Vert \mathbf{r} \Vert^{-d-1} \ll_{d,q} A^{-2}.\end{align*}

All other terms in \eqref{ets2} have $\Vert \mathbf{r} \Vert \geq \frac{A}{q}$. Using the trivial bound
\[ \sum_{k \in \Z} \min(1, N^{-2} |k + \frac{s}{q} - \mathbf{r}\cdot \theta|^{-2}) \ll 1,\]
the contribution from these is bounded by
\[ O_{d,\eps, \eps',q}(1) \sum_{\Vert \mathbf{r} \Vert \geq A/q} \Vert \mathbf{r} \Vert^{-d-1} \ll_{d,\eps,\eps', q} A^{-1} .\]
Putting all of this together shows that \eqref{ets2} is bounded by $O_{d,\eps, \eps, q}(A^{-1})$, and so \eqref{ets} does indeed hold if $A$ is large enough as a function of $\eps, \eps', d, q$.
\end{proof}


\begin{thebibliography}{99}

\bibitem{bombieri-friedlander-iwaniec} E.~Bombieri, J.~B.~Friedlander and H.~Iwaniec, \emph{Primes in arithmetic progressions to large moduli. II,} Math. Ann. \textbf{277} (1987), no. 3, 361--393.


\bibitem{sean-writeup} S.~Eberhard, \emph{The abelian arithmetic regularity lemma,} arXiv:1606.09303.


\bibitem{eberhard-green-manners} S.~Eberhard, B.~Green and F.~Manners, \emph{Sets of integers with no large sum-free subset,} 
Ann. of Math. (2) \textbf{180} (2014), no. 2, 621--652. 

\bibitem{green-arith-regularity} B.~Green, \emph{A Szemer\'edi-type regularity lemma in abelian groups}, with applications. Geom. Funct. Anal. \textbf{15} (2005), no. 2, 340--376.


\bibitem{green-tao-arithregularity} B.~Green and T.~Tao, \emph{An arithmetic regularity lemma, an associated counting lemma, and applications,} in An irregular mind, 261--334, Bolyai Soc. Math. Stud., 21, J\'anos Bolyai Math. Soc., Budapest, 2010.


\bibitem{green-tao-nilratner} B.~Green and T.~Tao, \emph{The quantitative behaviour of polynomial orbits on nilmanifolds,}
Ann. of Math. (2) \textbf{175} (2012), no. 2, 465--540.

\bibitem{gcs} K.~Gyarmati, P.~Csikv\'ari and A.~S\'ark\"ozy, \emph{Density and Ramsey type results on algebraic equations with restricted solution sets,} Combinatorica \textbf{32} (2012), 425--449.

\bibitem{katznelson} Y.~Katznelson, \emph{An introduction to Harmonic analysis,} 2nd Edition, Dover, New York 1976.

\bibitem{sk} A.~Khalafallah and E.~Szemer\'edi, \emph{On the Number of Monochromatic Solutions of $x + y = z^2$}, Combinatorics, Probability and Computing \textbf{15} (2006), nos 1--2, 213--227.

\bibitem{los1} J.~C.~Lagarias, A.~M.~Odlyzko and J.~B.~Shearer, \emph{On the density of sequences of integers the sum of no two of which is a square. I. Arithmetic progressions,} J. Combin. Theory Ser. A \textbf{33} (1982), no. 2, 167--185. 

\bibitem{los2} J.~C.~Lagarias, A.~M.~Odlyzko and J.~B.~Shearer, \emph{On the density of sequences of integers the sum of no two of which is a square. II. General sequences,} J. Combin. Theory Ser. A \textbf{34} (1983), no. 2, 123--139. 

\bibitem{lindqvist} S.~Lindqvist, \emph{Partition regularity of generalised Fermat equations,} arXiv:1606.07334.

\bibitem{lindqvist-unpub} S.~Lindqvist, \emph{Monochromatic solutions to $x + y$ a square in $\Z/q\Z$,} available at \\
\texttt{http://people.maths.ox.ac.uk/lindqvist/notes/xysumsquare.pdf}

\bibitem{montgomery} H.~L.~Montgomery, \emph{Ten lectures on the interface between analytic number theory and harmonic analysis,} CBMS Regional Conference Series in Mathematics, \textbf{84}. Published for the Conference Board of the Mathematical Sciences, Washington, DC; by the American Mathematical Society, Providence, RI, 1994. xiv+220 pp.



\bibitem{vaughan-book}  R.~C.~Vaughan, \emph{The Hardy-Littlewood method,} Second edition. Cambridge Tracts in Mathematics \textbf{125}. Cambridge University Press, Cambridge, 1997. xiv+232 pp.

\bibitem{wooley-free} T.~D.~Wooley, \emph{On Diophantine inequalities: Freeman's asymptotic formulae,} Proceedings of the Session in Analytic Number Theory and Diophantine Equations, 32 pp., Bonner Math. Schriften, 360, Univ. Bonn, Bonn, 2003. 






\end{thebibliography}
\end{document}